\documentclass[a4paper, reqno]{amsart}
\usepackage{amssymb,amscd,amsfonts,amsbsy,nicefrac,mathtools}
\usepackage[utf8]{inputenc}
\usepackage{amsmath,amsthm,amssymb,amsfonts,enumitem}
\usepackage{mathrsfs}
\usepackage[nobysame]{amsrefs}
\usepackage{hyperref}
\usepackage[pdftex]{graphicx,color}
\usepackage{graphicx}
\usepackage{marvosym}
\usepackage{scalerel}
\usepackage{bbm}
\usepackage{bm}
\usepackage{setspace}
\usepackage{tikz}
\usepackage{soul}

\usepackage[T1]{fontenc}

\usepackage{fancyhdr} 
\fancyhf{} 
\cfoot{\thepage}
\pagestyle{plain} 
\setlength{\footskip}{40pt}


\usepackage[a4paper,tmargin=3cm,bmargin=3cm,lmargin=3cm,rmargin=3cm]{geometry}

\newtheorem{thm}{Theorem}[section]
\newtheorem{rem}[thm]{Remark}
\newtheorem{lem}[thm]{Lemma}

\newtheorem{cor}[thm]{Corollary}
\newtheorem{prop}[thm]{Proposition}
\newtheorem{defn}[thm]{Definition}


\newcommand{\R}{\mathbb{R}}
\newcommand{\bbR}{\mathbb{R}}
\newcommand{\C}{\mathbb C}
\newcommand{\bbC}{\mathbb{C}}

\newcommand{\bbZ}{\mathbb{Z}}
\newcommand{\N}{\mathbb{N}}

\newcommand{\supp}{\operatorname{supp}}

\renewcommand{\d}{\partial}

\DeclareMathOperator{\lcm}{lcm}

\DeclareFontFamily{U}{mathx}{}
\DeclareFontShape{U}{mathx}{m}{n}{<-> mathx10}{}
\DeclareSymbolFont{mathx}{U}{mathx}{m}{n}
\DeclareMathAccent{\widehat}{0}{mathx}{"70}
\DeclareMathAccent{\widecheck}{0}{mathx}{"71}

\newcommand{\abs}[1]{\left|#1\right|}
\newcommand{\set}[1]{\left\{#1\right\}}


\newcommand{\m}[1]{\begin{equation*}
\begin{split}
#1
\end{split}
\end{equation*}}
\newcommand{\nm}[2]{\begin{equation}\label{#1}
\begin{split}
#2
\end{split}
\end{equation}}




\usepackage[normalem]{ulem}

\begin{document}

\title{Determinantal Formulas for Rational Perturbations of
Multiple Orthogonality Measures}
\date{\today}
\author{Rostyslav Kozhan$^{1}$}
\author{Marcus Vaktnäs$^{2}$}
\address{$^{1}$Department of Mathematics, Uppsala University, S-751 06 Uppsala, Sweden, rostyslav.kozhan@math.uu.se}
\address{$^{1}$Department of Mathematics, Uppsala University, S-751 06 Uppsala, Sweden, marcus.vaktnas@math.uu.se}
\begin{abstract}
    Given multiple orthogonal polynomials on the real line with respect to a system $\bm{\mu} = (\mu_1,\ldots,\mu_r)$, we investigate multiple orthogonal polynomials associated with any 
    rational perturbation of the form
    $$
    \widetilde{\bm\mu}=\Big(\frac{\Phi_1}{\Psi_{1}} \mu_1,\dots,\frac{\Phi_r}{\Psi_r}\mu_r\Big),
    $$
    for any polynomials $\Phi_1,\dots,\Phi_r$ and $\Psi_1,\dots,\Psi_r$. 
    We derive the analogues of Uvarov's determinantal formula for the multiple orthogonal polynomials  of type I and type II for $\widetilde{\bm\mu}$ and establish necessary and sufficient condition  for normality of the indices.

    The result allows the polynomials $\{\Phi_j,\Psi_j\}_{j=1}^r$ to be arbitrary and permits the addition of finitely many point masses to each of the measures $\mu_j$. Moreover, the measures $\mu_j$ may be taken as quasi-definite linear functionals, which is of interest even in the case $r=1$. 

\end{abstract}
\maketitle

\section{Introduction}

For a positive Borel measure $\mu$ on $\R$ with infinite support and finite moments, we have orthogonal polynomials $(P_n)_{n = 0}^{\infty}$ defined by 
the orthogonality condition
\nm{eq:orthogonality conditions one measure}{\int P_n(x)x^p d\mu(x) = 0, \qquad p = 0,\dots,n-1,}
along with the degree condition $\deg{P_n} \leq n$.
Each non-zero $P_n$ necessarily has degree $n$ and is unique up to multiplication by a constant. 
If we normalize 
 the polynomials to be monic (having leading coefficient $1$), then these polynomials are uniquely determined by 
\eqref{eq:orthogonality conditions one measure}. 

Now consider the condition
\nm{orthogonality conditions ct one measure}{\int \widehat{P}_n(x)x^p\Phi(x)d\mu(x) = 0, \qquad p = 0,\dots,n-1,}
along with the degree condition $\deg{\widehat{P}_n} \leq n$, for a polynomial $\Phi(x) = \prod_{k = 1}^N(x-z_k)$. If $\Phi$ is positive on $\supp(\mu)$ then \eqref{orthogonality conditions ct one measure} is just \eqref{eq:orthogonality conditions one measure} for the positive measure $\widehat{\mu}$ defined by
\nm{eq:ct one measure}{\int f(x) d\widehat{\mu}(x) = \int f(x) \Phi(x)d\mu(x).}
Note that this measure is also Borel with infinite support and finite moments. Assuming $\Phi$ has only simple zeros, Christoffel's Theorem \cite{Christoffel} states that the monic orthogonal polynomials $\widehat{P}_n$ with respect to $\widehat{\mu}$ are given by
\nm{eq:ct thm one measure}{\widehat{P}_n(x) = {\Phi(x)}^{-1}D_n^{-1}\det\begin{pmatrix}
P_{n+N}(x) & P_{n+N-1}(x) & \cdots & P_n(x) \\
P_{n+N}(z_1) & P_{n+N-1}(z_1) & \cdots & P_n(z_1) \\
\vdots & \vdots & \ddots & \vdots \\
P_{n+N}(z_N) & P_{n+N-1}(z_N) & \cdots & P_n(z_N) \\ 
\end{pmatrix},}
where $D_n$ is the normalizing constant
\nm{eq:dn ct one measure}{D_n = \det\begin{pmatrix}
P_{n+N-1}(z_1) & \cdots & P_n(z_1) \\
\vdots & \ddots & \vdots \\
P_{n+N-1}(z_N) & \cdots & P_n(z_N) \end{pmatrix}.}

We refer to $\widehat{\mu}$ as a Christoffel transform of $\mu$. 
If $\deg{\Phi} = 1$ we say that $\widehat{\mu}$ is a one-step Christoffel transform, and then \eqref{eq:ct thm one measure} simplifies to
\nm{eq:one step ct thm one measure}{\widehat{P}_n(x) = (x-z_1)^{-1} \left(P_{n+1}(x) - \frac{P_{n+1}(z_1)}{P_n(z_1)} P_n(x)\right).}
It is well-known that the zeros of $P_n$ lie in the interior of every interval containing $\supp(\mu)$, so the denominator in \eqref{eq:one step ct thm one measure} will always be non-zero, assuming $z_1$ is outside the convex hull of $\supp(\mu)$. More generally, one can show that \eqref{eq:dn ct one measure} is also non-zero, at least when $\Phi$ does not change sign on $\supp(\mu)$. 

Consider the more general rational transformations $\widetilde{\mu}$ defined by
\nm{eq:rp one measure}{\int f(x)d\widetilde{\mu}(x) = \int f(x)\frac{\Phi(x)}{\Psi(x)}d\mu(x),}
for polynomials $\Phi(x) = \prod_{k = 1}^N(x-z_k)$ and $\Psi(x) = \prod_{k = 1}^M(x-w_k)$ such that $\Phi/\Psi$ is positive and integrable on $\supp(\mu)$. Uvarov \cite{Uvarov} showed that the monic orthogonal polynomials $\widetilde{P}_n$ with respect to the measure $\widetilde{\mu}$ are given by
\nm{eq:rp thm one measure}{\widetilde{P}_n(x) = {\Phi(x)}^{-1}D_n^{-1}\det\begin{pmatrix}
P_{n+N}(x) & P_{n+N-1}(x) & \cdots & P_{n-M}(x) \\
P_{n+N}(z_1) & P_{n+N-1}(z_1) & \cdots & P_{n-M}(z_1) \\
\vdots & \vdots & \ddots & \vdots \\
P_{n+N}(z_N) & P_{n+N-1}(z_N) & \cdots & P_{n-M}(z_N) \\ 
Q_{n+N}(w_1) & Q_{n+N-1}(w_1) & \cdots & Q_{n-M}(w_1) \\
\vdots & \vdots & \ddots & \vdots \\
Q_{n+N}(w_M) & Q_{n+N-1}(w_M) & \cdots & Q_{n-M}(w_M)
\end{pmatrix}, \qquad n \geq M.}
Here $Q_n(z)$ are the so-called functions of the second kind defined by
\nm{eq:q rp one measure}{Q_n(z) = \int \frac{P_n(x)}{x-z} d\mu(x),}
and $D_{n}$ is the normalizing constant
\nm{eq:dn rp one measure}{D_n = \det{\begin{pmatrix}
P_{n+N-1}(z_1) & \cdots & P_{n-M}(z_1) \\
\vdots & \ddots & \vdots \\
P_{n+N-1}(z_N) & \cdots & P_{n-M}(z_N) \\ 
Q_{n+N-1}(w_1) & \cdots & Q_{n-M}(w_1) \\
\vdots & \ddots & \vdots \\
Q_{n+N-1}(w_M) & \cdots & Q_{n-M}(w_M)
\end{pmatrix}}.}
This determinant is always non-zero under the assumption that $\Phi/\Psi$ is positive on $\supp(\mu)$. 




The current paper {provides}  a generalization of the formula \eqref{eq:rp thm one measure} to the multiple orthogonality setting. Multiple orthogonal polynomials are polynomials that satisfy {simultaneous} orthogonality conditions with respect to multi-indices $(n_1,\dots,n_r)$ and a system of measures $(\mu_1,\dots,\mu_r)$, where the case $r=1$ reduces to the standard orthogonal polynomials. {For an introduction to multiple orthogonal polynomials with applications, see \cite{Aptekarev,Ismail,Applications}.} The rational transformations of $(\mu_1,\dots,\mu_r)$ considered in this paper are the systems $(\widetilde{\mu}_1,\dots,\widetilde{\mu}_r)$ of the form
\nm{eq:rp several measures}{\int f(x) d\widetilde{\mu}_j(x) = \int f(x)\frac{\Phi_j(x)}{\Psi_j(x)}d\mu_j(x) + \sum_{k = 1}^{M_j}c_{j,k}f({w_{j,k}}), \qquad j = 1,\dots,r.}

Such a rational perturbation of $\mu$ is not necessarily a positive measure, 
so we {choose to} work in the more general setting of orthogonality with respect to linear moment functionals. 
We prove the determinantal formulas, both for type I and for type II polynomials, and give a necessary and sufficient condition for their existence and uniqueness (normality). This condition is 
a simple determinantal condition $D_{\bm{n}} \ne 0$, where $D_{\bm{n}}$ is an appropriately chosen analogue of~\eqref{eq:dn rp one measure}. 


For the case $r=1$, the transformation~\eqref{eq:rp several measures} with $\Phi(x)=1$ is called the Geronimus transform~\cite{Geronimus} and ~\eqref{eq:rp several measures} with $\Phi(x)=\Psi(x)$ is called the Uvarov transform~\cite{Uvarov}.
Christoffel, Geronimus, and Uvarov transforms, as well as the closely connected topic of the Darboux transformation, are important in pure and applied mathematics. An inexhaustive list of papers on these topics is~\cite{BaiDer,BueMar,Chihara,Chi85,Christoffel,DerGarMar,GruHei,MarMar,Mar90,Mar91,GesTes,Peh92,SpiZhe,Uvarov,Zhe97} and~\cite{Bueno and Dopico,BDT,Galant,Gal92,Gautschi,Gautschi book,Golub and Kautsky,GolKau83,KauGol}.  

For the case of multiple orthogonality, results in this direction include: ~\cite{ADMVA} for one-step Geronimus and Christoffel transforms of type II polynomials; ~\cite{BFM23} for one-step Christoffel transforms of type I and type II polynomials;  \cite{ctpaper} treating multi-step Christoffel transform; 
and ~\cite{ManasRojas} addressing the multi-step Christoffel for mixed multiple orthogonality for indices along the step-line.
One-step multiple Christoffel transform is a tool that also appeared naturally in~\cite{AKVI,BDFLM,BFM22,KVInterlacing}.



{Upon completion of this manuscript, the independent work \cite{ManasRojas2} appeared addressing the general Geronimus transformation for multiple orthogonal systems. Their method of proof and formulas differ substantially from ours, and their approach is restricted to step-line indices and without considering combined Christoffel and Geronimus perturbations.
Their results treat the more general {\it mixed} multiple orthogonality setting, and their perturbations have the form of a polynomial matrix multiplication/division. This allows them to treat linear combinations of $\widetilde{\mu}_j$ in~\eqref{eq:rp several measures}, a very interesting setting that is currently beyond our technical reach. 
}



The organization of the paper is as follows. We remind the reader of the basics of orthogonal and multiple orthogonal polynomials with respect to linear functionals in Sections \ref{moment functionals}--\ref{moprl}. In Section \ref{type I formula} we prove a generalization of Uvarov's formula for type I polynomials, and in Section \ref{type II formula} we do the same for type II polynomials. Some understanding of the duality between the type I and type II setting reveals the proofs to be parallel. The proof of the type I formula requires $\Psi_1 = \cdots = \Psi_r$, while the proof of the type II formula requires $\Phi_1 = \cdots = \Phi_r$. In Section \ref{general case} we then describe how these formulas can be generalized to the case of any rational transformation. 

Through Uvarov's formula \eqref{eq:rp thm one measure}, $\Phi\widetilde{P}_n$ is expressed as a linear combination of $\set{P_{n+N-j}}_{j = 0}^{N+M}$. When we generalize it to multiple orthogonal polynomials, there are several possible sequences of multi-indices we can consider. In Section \ref{linear indepence of regular sequences} we define the most natural sequences (paths and frames), as well as a more general class of sequences containing the most natural ones, for which the determinantal formula holds. We remark that for type I polynomials we consider sequences moving upwards (see Definition \ref{def:forward sequences}), and for type II polynomials we consider sequences moving downwards (see Definition \ref{def:backward sequences}).  

In Section \ref{examples} we specialize our general formula to a number of natural special cases that might be of interest. Here the duality of the type I and type II setting is further illustrated through the comparison of the formulas for Christoffel transforms 
and Geronimus transforms,
as well as the comparison of {the systems} $(\Phi\mu_1,\Phi\mu_2\dots,\Phi\mu_r)$ and $(\Phi\mu_1,\mu_2\dots,\mu_r)$. We also note that Uvarov perturbations (adding point masses) arise as special cases of our formulas (the case $\Phi = \Psi$). The reader may find the generality of Section \ref{the general formula} easier to parse by reading it alongside Section \ref{examples} to compare. 

Another interesting observation is that our determinantal formulas for the systems of the form $(\Phi\mu_1,\mu_2,\dots,\mu_r)$ requires a generalization of \eqref{eq:rp thm one measure}, rather than just a generalization of \eqref{eq:ct thm one measure}. This is because our type II formula require $\Phi_1 = \dots = \Phi_r$, so we view the system $(\Phi\mu_1,\mu_2,\dots,\mu_r)$ as $(\frac{\Phi}{1}\mu_1,\frac{\Phi}{\Phi}\mu_2\dots,\frac{\Phi}{\Phi}\mu_r)$, and then generalizations of the $Q$'s of \eqref{eq:q rp one measure} enter into the determinantal formula (this is Section \ref{ex:partial ct type II}). This somewhat surprising observation provided a motivation for studying the more general rational perturbations rather than just Christoffel perturbations. Note however that for the type I formula we can view $(\Phi\mu_1,\mu_2\dots,\mu_r)$ as $(\frac{\Phi}{1}\mu_1,\frac{1}{1}\mu_2\dots,\frac{1}{1}\mu_r)$, and indeed no $Q$'s are needed there (Section \ref{ex:partial ct type I}).

Finally, in Section~\ref{ss:Laguerre} as a case study, we apply Theorems~\ref{thm:type I thm} and \ref{thm:type II thm} to a specific perturbation~\eqref{eq:LagPert1}--\eqref{eq:LagPert2} of the multiple Laguerre polynomials of the second kind.


It should be noted that the determinantal formulas for the one-step Christoffel transform in our paper~\cite{ctpaper} naturally lead to interlacing results between zeros of the multiple orthogonal polynomials for the initial and for the transformed systems. In particular this results in interlacing for the polynomials from the same classical multiple orthogonal sytems but with different parameters. It is straightforward to obtain analogous interlacing results from the determinantal formulas for the one-step Christoffel and one-step Geronimus transforms treated in this paper. This will be addressed in a forthcoming publication.

\subsubsection*{Acknowledgements}
The authors are grateful to the anonymous referees whose comments and suggestions helped to improve the paper. 

\section{Preliminaries}

\subsection{Rational Perturbations of Moment Functionals}\label{moment functionals}\hfill\\

The transformed orthogonality measure in~\eqref{orthogonality conditions ct one measure} is not necessarily a positive measure. To deal with this issue we view the measure $\mu$ as a linear functional 
\nm{eq:linear functional}{\mu[P(x)] = \int P(x) d\mu(x).}
\begin{defn}
    Let $(\nu_j)_{j=0}^\infty$ be a sequence of complex numbers, which we refer to as the moments. Let $\mu$ be the linear functional on the space of polynomials with complex coefficients defined by $\mu[x^j] = \nu_j$ and extended by linearity. 
    The orthogonal polynomials $(P_n)_{n=0}^{\infty}$ with respect 
    $\mu$ are defined by the orthogonality condition
\nm{eq:orthogonality conditions one functional}{\mu\left[P_n(x)x^p\right] = 0, \qquad p = 0,\dots,n-1,}
and the degree condition
\nm{eq:degree condition one functional}{\deg P_n \leq n.} 
\end{defn}

For positive definite $\mu$ this corresponds to orthogonality with respect to infinitely supported measures. If $\mu$ is not given by a positive measure we may have non-zero solutions to \eqref{eq:orthogonality conditions one functional}-\eqref{eq:degree condition one functional} with $\deg{P_n} < n$, and the solution may not be unique up to multiplication by a constant. These cases need to be excluded, so we use the following definition. 
\begin{defn}
    We say that $n \in \N$ is normal for $\mu$ if $\deg P_n = n$ for every non-zero solution of \eqref{eq:orthogonality conditions one functional}-\eqref{eq:degree condition one functional} at $n$. We say that $\mu$ is quasi-definite if every $n \in \N$ is normal.
\end{defn}
It is easy to see that normality of $n$ is equivalent to the existence and uniqueness of a {degree $n$} monic solution to~\eqref{eq:orthogonality conditions one functional}-\eqref{eq:degree condition one functional}. If we look for non-zero solutions of degree $<n$, i.e., of the form $P_n(x) = \kappa_{n-1}x^{n-1} + \cdots + \kappa_0$, then \eqref{eq:orthogonality conditions one functional} turns into a linear system of equations for the coefficients $\kappa_{n-1},\dots,\kappa_0$. The coefficient matrix of this system is the Hankel matrix given by
\begin{equation}\label{eq:matrix}
{H_n = \begin{pmatrix}
\nu_0 & \nu_1 & \cdots & \nu_{n-1} \\
\nu_1 & \nu_2 & \cdots & \nu_{n} \\
\vdots & \vdots & \ddots & \vdots \\
\nu_{n-1} & \nu_n & \cdots & \nu_{2n-2} \\
\end{pmatrix}.}
\end{equation}
Hence $n$ is normal for $\mu$ if and only if $\det{H_n} \neq 0$ (except in the trivial case $n = 0$, where we always take $n$ to be normal). For more about orthogonal polynomials with respect to quasi-definite functionals, see \cite{Chihara}. 

\begin{defn}
A Christoffel transform of the functional $\mu$ is a functional $\widehat{\mu}$ given by 
\nm{eq:ct definition one functional}{\widehat{\mu}[P(x)] = \mu[\Phi(x)P(x)].}
for some polynomial $\Phi(x) = \prod_{k = 1}^{N}(x-z_k)$. If $\deg{\Phi} = 1$ then we say that $\widehat{\mu}$ is a one-step Christoffel transform.
\end{defn}

We employ the notation $\widehat{\mu} = \Phi\mu$. As long as $\mu$ and $\widehat{\mu}$ are quasi-definite we still have \eqref{eq:ct thm one measure} and \eqref{eq:one step ct thm one measure}. If $\Phi(x) = x - z_1$ and $\mu$ is quasi-definite then it is well known that $\widehat{\mu}$ is quasi-definite if and only if $P_n(z_1) \neq 0$ for each $n \in \N$. More generally, for any $\Phi$ with simple zeros, if $\mu$ is quasi-definite then $\Phi\mu$ is quasi-definite if and only if $D_n \neq 0$ for each $n \in \N$ where $D_n$ is given by \eqref{eq:dn ct one measure}, {see, e.g.,~\cite{ctpaper}}.

\begin{defn}\label{defn:geronimus}
    A Geronimus transform of the functional $\mu$ is a functional $\widecheck{\mu}$ satisfying 
\nm{eq:gt definition one functional}{\widecheck{\mu}[\Psi(x)P(x)] = \mu[P(x)]}
for a polynomial $\Psi(x) = \prod_{k = 1}^M(x-w_k)$. 
\end{defn}
Note that this only uniquely determines $\widecheck{\mu}$ on a subspace of codimension $M$ (the subspace of polynomials divisible by $\Psi$). Indeed, the first $M$ moments of $\widecheck{\mu}$ can be chosen arbitrarily, and together with~\eqref{eq:gt definition one functional} the other moments are then uniquely determined. In other words, if we express the polynomial $P$ as
\begin{equation}
    P(x) = \sum_{j = 0}^{M-1}\kappa_jx^j + \sum_{j = n}^{\infty}\kappa_j\Psi(x)x^j,
\end{equation}
then $\widecheck{\mu}$ has to be given by
\begin{equation}
    \widecheck{\mu}[P(x)] = \sum_{j = 0}^{M-1}\kappa_j\widecheck{\mu}[x^j] + \sum_{j = n}^{\infty}\kappa_j\mu[x^j],
\end{equation}
but the choice of $\widecheck{\mu}[x^j]$ for $j = 0,\dots,M-1$ is arbitrary. 

For any choice of Geronimus transform $\widecheck{\mu}$, adding any complex measure supported on $w_1,\dots,w_M$ to $\widecheck{\mu}$ produces a new Geronimus transform satisfying~\eqref{eq:gt definition one functional}. If some zero $w_j$ has higher multiplicity we may also add derivatives of $\delta_{w_j}$. Note that for measures we had a canonical choice for the Geronimus transform with no additional point masses. 
For functionals there does not seem to be such a choice. In the one-step case one option would be 
\nm{eq:one-step geronimus functional}{\widecheck{\mu}[P(x)] = \mu\left[\frac{P(x)-P(z_1)}{x-z_1}\right],}
but we avoid any specific choice of Geronimus transform in this paper. 

\begin{defn}
    We refer to a rational perturbation of the functional $\mu$ as any functional $\widetilde{\mu}$ satisfying $\Psi\widetilde{\mu} = \Phi\mu$ for some non-zero polynomials $\Phi$ and $\Psi$.
\end{defn}
If $\Psi = 1$ we get Christoffel transforms and if $\Phi = 1$ we get Geronimus transforms. This setting extends the {general} rational perturbations of measures \eqref{eq:rp several measures}
to linear functionals. 
Uvarov's formula~\eqref{eq:rp thm one measure} can be extended to this general setting as follows. 
\begin{thm}\label{prop}
Let {$\mu$ be any quasi-definite functional, and } $\widetilde\mu$ be any functional satisfying $\Psi \widetilde{\mu} = \Phi \mu$ for polynomials $\Phi$ and $\Psi$ with simple zeros. Choose any Geronimus transform $\widecheck{\mu}$ of $\widetilde{\mu}$ corresponding to $\Phi$, that is, 
$\Phi\widecheck{\mu} = \widetilde{\mu}$. Finally, let 
\nm{eq:q rp one functional}{
Q_n(z) = \widecheck{\mu}\left[P_n(x)\frac{\Psi(x)}{x-z}\right], \qquad z = w_1,\dots,w_M.
}
Then $n \geq M$ is normal for $\widetilde{\mu}$ if and only if $D_n$ in \eqref{eq:dn rp one measure} is non-zero, and then~\eqref{eq:rp thm one measure} holds with this choice of $Q_n$.
\end{thm}
\begin{rem}
Note that $\frac{\Psi(x)}{x-w_k}$ is a polynomial, so that ~\eqref{eq:q rp one functional} is well-defined for any choice of functional $\widecheck{\mu}$. Also note that the functional $\widecheck{\mu}$ does not have to be quasi-definite. 
\end{rem}
\begin{rem}\label{rem:nonSimple}
    The case when $n < M$ can be handled by modifying the initial entries in the determinants~\eqref{eq:rp thm one measure} and~\eqref{eq:dn rp one measure}, similarly to Uvarov's \cite{Uvarov}. We work this out in the multiple orthogonality case in Sections \ref{type I formula}-\ref{type II formula}. The case when zeros of $\Phi$ and/or $\Psi$ are not simple can be handled in the standard way, just as in the usual Christoffel/Uvarov formulas. See Section~\ref{general case} for the details.
\end{rem}
\begin{proof}
    This is a special case $r=1$ of the more general Theorem~\ref{thm:type II thm} that we prove in Section~\ref{type II formula}.
\end{proof}

We stress that in the simplest case when:
\begin{equation}
    \begin{cases}
        (a) \mbox{  } \mu \mbox{ is a positive measure}; 
        
        \\
        (b) \mbox{  } \{w_k\}_{k=1}^M \cap \supp\mu =\varnothing;
        \\
        (c) \mbox{  } \widetilde{\mu}\{w_k\}=0 \mbox{ for all } k=1,\ldots, M,
    \end{cases}
\end{equation}
then $d\widecheck{\mu}(x) = \frac{1}{\Psi(x)} d\mu(x)$ and therefore
\begin{equation}
Q_n(w_k) = \int \frac{P_n(x)}{x-w_k} d\mu(x)
\end{equation}
is reduced to the function of the second kind evaluated at $w_k$. This is exactly the setting of the celebrated result of Uvarov~\cite{Uvarov}.


If any of the conditions (a)--(c) fail, the expression for $Q_n(w_k)$ is no longer this simple, and its explicit form depends on the particular setting under consideration. In this case, $\widecheck{\mu}$, and hence $Q_n$, can be determined via a straightforward calculation, as illustrated in Corollaries~\ref{cor:basic rp thm} and~\ref{cor:pureUvarov} (with further examples in Sections~\ref{examples} and ~\ref{ss:Laguerre}). For instance, if (a) and (b) hold but (c) fails, then $Q_n$ can take either the form~\eqref{eq:rp discrete parts one measure} in the case when $\Phi$ and $\Psi$ share no zeros, or \eqref{eq:pureUvarov} if $\Phi=\Psi$. Note that while both expressions involve functions of the second kind, the additional discrete terms vary greatly. In our opinion, it is remarkable that the expression~\eqref{eq:q rp one functional} is able to accommodate all the possible cases simultaneously, and as a consequence, Theorem~\ref{prop}  (combined with Remark~\ref{rem:nonSimple}) works for {\it any} rational perturbation of {\it any} quasi-definite linear functional.




\begin{cor}\label{cor:basic rp thm}
Assume $\Phi$ and $\Psi$ have non-overlapping simple zeros $\{z_j\}_{j=1}^N$ and $\{w_j\}_{j=1}^M$, respectively. Suppose $\Phi/\Psi$ is positive on $\supp(\mu)$. Let $\widetilde{\mu}$ be the measure satisfying
\nm{eq:rp discrete parts one measure}{\int f(x) d\widetilde{\mu}(x) = \int f(x) \frac{\Phi(x)}{\Psi(x)}d\mu(x) + \sum_{k = 1}^M c_k f({w_k})}
with $c_k>0$. Then~\eqref{eq:rp thm one measure} holds with
\nm{eq:q rp discrete parts one measure}{
Q_n(w_k) = \int \frac{P_n(x)}{x-w_k}d\mu(x) + \frac{c_k \Psi'(w_k)}{\Phi(w_k)} P_n(w_k), \qquad k = 1,\dots,M.}
\end{cor}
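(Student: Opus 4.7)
The plan is to deduce this from Proposition~\ref{prop} by producing an explicit Geronimus transform $\widecheck\mu$ of $\widetilde\mu$ corresponding to $\Phi$ and then evaluating the resulting $Q_n(w_k)$. Note that the positivity of $\Phi/\Psi$ on $\supp(\mu)$ forces $\Psi$ to be non-vanishing on $\supp(\mu)$ (since the zeros of $\Psi$ and $\Phi$ are disjoint), so $1/\Psi$ is bounded and integrable against $\mu$ on $\supp(\mu)$, and both $\mu$ and $\widetilde\mu$ are positive measures with infinite support, hence quasi-definite.

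First I would check that $\Psi\widetilde\mu = \Phi\mu$: for any polynomial $P$,
\[
(\Psi\widetilde\mu)[P] = \int \Psi(x)P(x)\frac{\Phi(x)}{\Psi(x)}\,d\mu(x) + \sum_{k=1}^M c_k \Psi(w_k)P(w_k) = \mu[\Phi P],
\]
since $\Psi(w_k)=0$. Thus $\widetilde\mu$ is a rational perturbation of $\mu$ in the sense of Proposition~\ref{prop}.

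Next I would define the candidate Geronimus transform
\[
\widecheck{\mu}[f] = \int \frac{f(x)}{\Psi(x)}\,d\mu(x) + \sum_{k=1}^{M}\frac{c_k}{\Phi(w_k)} f(w_k),
\]
which is well-defined on polynomials by the remark above, and verify $\Phi\widecheck\mu = \widetilde\mu$ by a direct calculation: the integral term produces $\int\frac{\Phi(x)}{\Psi(x)}P(x)\,d\mu(x)$, and the discrete sum gives $\sum_k c_k P(w_k)$, since the $\Phi(w_k)$ factor cancels.

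Finally I would plug $\widecheck\mu$ into \eqref{eq:q rp one functional}. The polynomial $\Psi(x)/(x-w_k)$ is well-defined since $w_k$ is a simple zero of $\Psi$, and it vanishes at every $w_j$ with $j\neq k$ (as $\Psi(w_j)=0$), while its value at $w_k$ equals $\Psi'(w_k)$. Therefore
\[
Q_n(w_k) = \int \frac{P_n(x)}{x-w_k}\,d\mu(x) + \frac{c_k P_n(w_k)\Psi'(w_k)}{\Phi(w_k)},
\]
which is exactly~\eqref{eq:q rp discrete parts one measure}. Applying Proposition~\ref{prop} then yields~\eqref{eq:rp thm one measure}. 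There is no real obstacle here: the only subtle point is the evaluation of $\Psi(x)/(x-w_k)$ at the node $w_k$ via $\Psi'(w_k)$, which reflects how the Dirac mass at $w_k$ interacts with the factor $(x-w_k)^{-1}$ in the definition of $Q_n$.
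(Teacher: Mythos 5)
Your proposal is correct and follows essentially the same route as the paper: it chooses the Geronimus transform $\widecheck{\mu}[f] = \int f(x)\Psi(x)^{-1}\,d\mu(x) + \sum_{k}\frac{c_k}{\Phi(w_k)}f(w_k)$ and reduces \eqref{eq:q rp one functional} to \eqref{eq:q rp discrete parts one measure} via the evaluation $\frac{\Psi(x)}{x-w_k}\big|_{x=w_j} = \delta_{jk}\Psi'(w_k)$. The additional verifications you include (that $\Psi\widetilde\mu=\Phi\mu$, that $\Phi\widecheck\mu=\widetilde\mu$, and the quasi-definiteness of $\mu$ and $\widetilde\mu$) are exactly the ``quick calculation'' the paper leaves implicit.
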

\begin{proof}
    Define
    \m{\int f(x)d\widecheck{\mu}(x) = \int f(x)\frac{1}{\Psi(x)}d\mu(x) + \sum_{k = 1}^{M}\frac{c_k}{\Phi(w_k)}f(w_k).}    
    

    Then \eqref{eq:q rp one functional} can be computed as
    $$
    Q_n(w_k) = \int f(x)\frac{P_n(x)}{x-w_k}d\mu(x) + \sum_{k = 1}^{M}\frac{c_k}{\Phi(w_k)} \left[P_n(x)\frac{\Psi(x)}{x-w_k}\right]_{x=w_k},
    $$
    which becomes \eqref{eq:q rp discrete parts one measure}.
\end{proof}

 

\begin{cor}\label{cor:pureUvarov}
Assume $\{w_k\}_{k=1}^M \subset\bbR\setminus\supp(\mu)$ are distinct and let 
\nm{eq:rp discrete parts one measureNEW}{\int f(x) d\widetilde{\mu}(x) = \int f(x) d\mu(x) + \sum_{k = 1}^M c_k f({w_k}),}
with $c_k>0$, $1 \leq k \leq N$. 
Then~\eqref{eq:rp thm one measure} holds with $w_j=z_j$, $1\le k \le M$, and 
\nm{eq:pureUvarov}{
Q_n(w_k) = \int \frac{P_n(x)}{x-w_k}d\mu(x) + c_k P_n'(w_k), \qquad k = 1,\dots,M.}
\end{cor}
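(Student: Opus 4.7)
The plan is to apply Proposition~\ref{prop} with the choice $\Phi=\Psi=\prod_{k=1}^N(x-z_k)$. Since $\Psi(z_k)=0$, the point masses in $\widetilde\mu=\mu+\sum_{k=1}^N c_k\delta_{z_k}$ are annihilated by multiplication by $\Psi$, so $\Psi\widetilde\mu[g]=\mu[\Psi g]=\Phi\mu[g]$ for every polynomial $g$, i.e.\ $\Psi\widetilde\mu=\Phi\mu$. The assumptions $z_k\notin\supp(\mu)$ and $c_k>0$ make $\widetilde\mu$ a positive measure of infinite support, hence quasi-definite, so Proposition~\ref{prop} indeed applies.

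The central step is to exhibit a concrete Geronimus transform $\widecheck\mu$ of $\widetilde\mu$ satisfying $\Phi\widecheck\mu=\widetilde\mu$. I would take
\[
\widecheck\mu[f] := \mu\!\left[\frac{f(x)-I_\Phi[f](x)}{\Phi(x)}\right] + \sum_{k=1}^N \frac{c_k}{\Phi'(z_k)}\,f'(z_k),
\]
where $I_\Phi[f](x)=\sum_{k=1}^N f(z_k)\Phi(x)/((x-z_k)\Phi'(z_k))$ is the Lagrange interpolant at the simple zeros of $\Phi$ (so that $f-I_\Phi[f]$ is divisible by $\Phi$). Checking $\Phi\widecheck\mu=\widetilde\mu$ is immediate: for $f=\Phi g$ one has $f(z_k)=0$, hence $I_\Phi[\Phi g]=0$, while $f'(z_k)=\Phi'(z_k)g(z_k)$, so the derivative terms with coefficients $c_k/\Phi'(z_k)$ supply exactly the point masses $\sum_k c_k g(z_k)$.

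With this $\widecheck\mu$ I would substitute $f(x)=P_n(x)\Psi(x)/(x-z_k)$ into $Q_n(z_k)=\widecheck\mu[f]$. Using $\Psi(z_j)=0$ for $j\ne k$, together with $\Psi(x)/(x-z_k)|_{x=z_k}=\Psi'(z_k)$ and the identity $\frac{d}{dx}[\Psi(x)/(x-z_k)]|_{x=z_j}=\Psi'(z_j)/(z_j-z_k)$ for $j\ne k$ (obtained by differentiating $\Psi(x)=(x-z_k)h_k(x)$), and absorbing a natural $P_n(z_k)\int (x-z_k)^{-1}d\mu(x)$ term (well-defined because $z_k\notin\supp(\mu)$) into the Cauchy integral, the computation reduces to
\[
Q_n(z_k)=\int\frac{P_n(x)}{x-z_k}\,d\mu(x)+c_k P_n'(z_k)+\sum_{j=1}^N\gamma_{kj}\,P_n(z_j),
\]
where the $\gamma_{kj}$ depend only on $\mu$, $\Phi$, and the $c_j$'s, and in particular are independent of $n$.

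The finishing observation is that in Uvarov's formula~\eqref{eq:rp thm one measure} with $w_j=z_j$, modifying the $k$-th $Q$-row by adding any fixed linear combination of the $P$-rows is an elementary row operation across the shift indices $m=n+N,\ldots,n-M$, and hence leaves both the numerator determinant and the normalizing determinant $D_n$ unchanged. Performing these row operations simultaneously for $k=1,\ldots,N$ cancels the corrections $\sum_j \gamma_{kj}P_n(z_j)$, so the quotient $\widetilde P_n$ is unaltered and one obtains~\eqref{eq:rp thm one measure} with the simpler $Q_n(z_k)=\int P_n(x)/(x-z_k)\,d\mu(x)+c_k P_n'(z_k)$ of the corollary. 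The only real obstacle is the Lagrange-interpolation bookkeeping that verifies the correction terms are indeed $n$-independent linear combinations of $P_n(z_1),\ldots,P_n(z_N)$; once this structural fact is in hand, the row-operation step concludes the proof immediately.
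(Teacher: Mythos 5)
Your argument is correct and is essentially the paper's own proof, which is carried out (for general $r$) in Example~\ref{ex:fullUvarov}: choose a Geronimus transform $\widecheck{\mu}$ of $\widetilde{\mu}$ containing the derivative functionals $\frac{c_k}{\Phi'(z_k)}f'(z_k)$, compute $Q_n(z_k)$ to find it equals the desired expression plus $n$-independent linear combinations of $P_n(z_1),\dots,P_n(z_N)$, and eliminate those corrections by row operations using the $P(z_j)$-rows of the determinant. The only difference is your choice of the ``regular'' part of $\widecheck{\mu}$ (the Lagrange-interpolant construction versus the paper's $\Phi^{-1}\mu$ plus tuned point masses $d_{j,k}\delta_{z_k}$), but the two choices differ only by point masses at the $z_k$, which is immaterial since your row-operation step absorbs all such discrepancies at once.
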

\begin{rem}
    The result of Corollary \ref{cor:basic rp thm} appears in the work of Zhedanov \cite{Zhe97}. In~\cite[Sect 2]{Uvarov} Uvarov obtained another formula for $\widetilde{P}_n$ for the transformation~\eqref{eq:rp discrete parts one measureNEW} in terms of kernel polynomials. 
\end{rem}

\subsection{Multiple Orthogonal Polynomials of Type I and Type II}\label{moprl}\hfill\\

Let $r$ be a positive integer and consider a system of linear functionals $\bm{\mu} = (\mu_1,\dots,\mu_r)$. We assume that each of $\mu_1,\dots,\mu_r$ are quasi-definite throughout the rest of the paper. We remind the basics of multiple orthogonal polynomials. For more details, see \cite{Aptekarev,Ismail,Nikishin}. We write $\bm{n}$ for multi-indices $(n_1,\dots,n_r) \in \N^r$, and $\abs{\bm{n}} = n_1 + \cdots + n_r$. Here we use the notation $\N = \set{0,1,2,\dots}$.
\begin{defn}
    A type I multiple orthogonal polynomial for the multi-index $\bm{n}$ is a non-zero vector of polynomials $\bm{A}_{\bm{n}} = (A_{\bm{n}}^{(1)},\dots,A_{\bm{n}}^{(r)})$ satisfying the orthogonality condition
\nm{eq:orthogonality conditions type I}{\sum_{j = 1}^r \mu_j[A_{\bm{n}}^{(j)}(x) x^p] = 0,\qquad p = 0,1,\dots,\abs{\bm{n}}-2,}
and the degree conditions
\nm{eq:degree conditions type I}{& \deg{A_{\bm{n}}^{(1)}} \leq n_1 - 1, \\
& \qquad \qquad \vdots \\
& \deg{A_{\bm{n}}^{(r)}} \leq n_r - 1.} 
Note that $A_{\bm{n}}^{(j)} = 0$ is the only possible choice when $n_j = 0$. In the special case $\bm{n} = \bm{0}$ we allow $\bm A_{\bm{0}} = \bm{0}$ to be a type I polynomial. 
\end{defn}
\begin{defn}
    A type II multiple orthogonal polynomial for the multi-index $\bm{n}$ is a non-zero polynomial $P_{\bm{n}}$ satisfying the orthogonality conditions
\nm{eq:orthogonality conditions type II}{& \mu_1[P_{\bm{n}}(x)x^p] = 0, \qquad p = 0,\dots,n_1-1, \\
& \qquad \qquad \vdots \\
& \mu_r[P_{\bm{n}}(x)x^p] = 0, \qquad p = 0,\dots,n_r-1,}
and the degree condition
\nm{eq:degree conditions type II}{\deg{P_{\bm{n}}} \leq \abs{\bm{n}}.}
\end{defn}

In both definitions we have less equations than coefficients to solve for, so  there always exists non-zero type I and type II polynomials. The following result extends the notion of normality and quasi-definite functional to the multiple orthogonality setting (see \cite{Ismail} and our discussion in \cite[Sect 2.6]{ctpaper}). 

\begin{lem}\label{lem:normality lemma}
For any multi-index $\bm{n} \neq \bm{0}$, the following statements are equivalent.
    \begin{itemize}
        \item[\textnormal{(i)}] There is a unique type I polynomial $(A_{\bm{n}}^{(1)},\dots,A_{\bm{n}}^{(r)})$ such that $\sum_{j = 1}^r \mu_j[A_{\bm{n}}^{(j)}(x) x^{\abs{\bm{n}}-1}] = 1$.
        \item[\textnormal{(ii)}] There is a unique type II polynomial $P_{\bm{n}}$ with degree $\abs{\bm{n}}$ such that $P_{\bm{n}}$ is monic.
        \item[\textnormal{(iii)}] $\sum_{j = 1}^r \mu_j[A_{\bm{n},j}(x) x^{\abs{\bm{n}}-1}] \neq 0$ for every type I polynomial.
        \item [\textnormal{(iv)}] $\deg{P_{\bm{n}}} = \abs{\bm{n}}$ for every type II polynomial.
        \item[\textnormal{(v)}] $\deg{A_{\bm{n}+\bm{e}_j}^{(j)}} = n_j$ for every type I polynomial (for the index $\bm{n}+\bm{e}_j$).
        \item[\textnormal{(vi)}] If $n_j > 0$, then $\mu_j[P_{\bm{n}-\bm{e}_j}(x) x^{n_j-1}] \neq 0$ for every type II polynomial (for the index $\bm{n}-\bm{e}_j$).
        \item[\textnormal{(vii)}] $\det H_{\bm{n}} \ne 0$, where 
\begin{equation}
    \label{eq:moprl matrix}
H_{\bm{n}} = \begin{pmatrix}
\nu_0^{(1)} & \nu_1^{(1)} & \cdots & \nu_{\abs{\bm{n}}-1}^{(1)} \\
\nu_1^{(1)} & \nu_2^{(1)} & \cdots & \nu_{\abs{\bm{n}}}^{(1)} \\
\vdots & \vdots & \ddots & \vdots \\
\nu_{n_1-1}^{(1)} & \nu_{n_1}^{(1)} & \cdots & \nu_{\abs{\bm{n}}+n_1-2}^{(1)} \\
\hline
& & \vdots \\
\hline
\nu_0^{(r)} & \nu_1^{(r)} & \cdots & \nu_{\abs{\bm{n}}-1}^{(r)} \\
\nu_1^{(r)} & \nu_2^{(r)} & \cdots & \nu_{\abs{\bm{n}}}^{(r)} \\
\vdots & \vdots & \ddots & \vdots \\
\nu_{n_r-1}^{(r)} & \nu_{n_r}^{(r)} & \cdots & \nu_{\abs{\bm{n}}+n_r-2}^{(r)} \\
\end{pmatrix},
\end{equation}
and $\nu_n^{(j)}$ are the moments $\nu_n^{(j)} = \mu_j[x^n]$, $n \in \N$, $j = 1,\dots,r$.
    \end{itemize}
\end{lem}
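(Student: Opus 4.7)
The plan is to identify all seven conditions with properties of the single matrix $M_{\bm{n}}$ from~\eqref{eq:moprl matrix}, thereby reducing the lemma to one invertibility statement. The matrix $M_{\bm{n}}$ plays a dual role: it is the coefficient matrix of the linear system defining type II polynomials at $\bm{n}$, and (after a natural identification of indices) its transpose is the coefficient matrix of the augmented linear system defining type I polynomials at $\bm{n}$.

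First I would expand a candidate type II polynomial as $P_{\bm{n}}(x) = \sum_{k=0}^{|\bm{n}|} \alpha_k x^k$ and rewrite~\eqref{eq:orthogonality conditions type II} as
\[
\sum_{k=0}^{|\bm{n}|-1} \alpha_k \nu_{k+p}^j \;=\; -\alpha_{|\bm{n}|}\, \nu_{|\bm{n}|+p}^j, \qquad p = 0,\dots,n_j-1,\ \ j = 1,\dots,r.
\]
With the ordering of rows and columns used in~\eqref{eq:moprl matrix}, the left-hand side is $M_{\bm{n}}$ applied to the column vector $(\alpha_0, \dots, \alpha_{|\bm{n}|-1})^T$. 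Consequently, $\det M_{\bm{n}} \neq 0$ is equivalent to ``$\alpha_{|\bm{n}|} = 0$ forces $P_{\bm{n}} \equiv 0$'', which is (iv), and is simultaneously equivalent to the existence of a unique monic solution, which is (ii). This yields (ii)\,$\iff$\,(iv)\,$\iff$\,(vii).

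Next I would analyze type I polynomials at $\bm{n}$. Writing $A_{\bm{n}}^{(j)}(x) = \sum_{k=0}^{n_j-1} \beta_k^j x^k$, I obtain a linear system in the $|\bm{n}|$ unknowns $\beta_k^j$ consisting of the $|\bm{n}|-1$ orthogonality conditions~\eqref{eq:orthogonality conditions type I}, augmented by treating the quantity $s := \sum_{j=1}^{r} \mu_j[A_{\bm{n}}^{(j)}(x) x^{|\bm{n}|-1}]$ as an extra equation. A direct inspection of entries shows that the resulting $|\bm{n}| \times |\bm{n}|$ coefficient matrix is precisely $M_{\bm{n}}^T$. Hence $\det M_{\bm{n}} \neq 0$ is equivalent to ``$s = 0$ forces all $\beta_k^j = 0$'', i.e., (iii), and also to ``the system with right-hand side $(0,\dots,0,1)^T$ has a unique solution'', i.e., (i). This gives (i)\,$\iff$\,(iii)\,$\iff$\,(vii).

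Finally, for (v) I would observe that a type I polynomial at $\bm{n}+\bm{e}_j$ whose $j$-th component has degree strictly less than $n_j$ is the same object as a nonzero type I polynomial at $\bm{n}$ for which the normalization $s$ vanishes: the orthogonality conditions at $\bm{n}+\bm{e}_j$ range over $p = 0,\dots,|\bm{n}|-1$, which split as the orthogonality conditions at $\bm{n}$ together with the single extra equation $s = 0$. This gives (v)\,$\iff$\,(iii). Analogously, for (vi), a nonzero type II polynomial at $\bm{n}-\bm{e}_j$ satisfying $\mu_j[P(x) x^{n_j-1}] = 0$ is exactly a type II polynomial at $\bm{n}$ of degree strictly less than $|\bm{n}|$, yielding (vi)\,$\iff$\,(iv). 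The main (mild) obstacle is purely bookkeeping: setting up a consistent ordering of the indices $(j,k)$ and $(j,l)$ so that the transpose identification between the type I and type II coefficient matrices is manifest. Once this is done, every implication reduces to an elementary linear-algebra observation about $M_{\bm{n}}$.
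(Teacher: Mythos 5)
Your argument is correct. The paper itself states this lemma without proof, deferring to \cite{Ismail} and \cite[Sect.~2.6]{ctpaper}, and your reduction is exactly the standard one those references use: the type II orthogonality conditions in the coefficients $(\alpha_0,\dots,\alpha_{|\bm{n}|-1})$ have coefficient matrix $M_{\bm{n}}$, the augmented type I system has coefficient matrix $M_{\bm{n}}^{T}$, and (v), (vi) are correctly recognized as restatements of the homogeneous versions of (iii), (iv) at the shifted indices. Nothing is missing.
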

\begin{defn}
    We say that an index $\bm{n} \neq \bm{0}$ is normal if any of the equivalent statements in Lemma \ref{lem:normality lemma} hold. For a normal index we refer to (i) and (ii) as the normalized polynomials. The index $\bm{n} = \bm{0}$ is always taken to be normal. If every index $\bm{n} \in \N^r$ is normal we say that the system $\bm{\mu}$ is perfect. Note that when $r = 1$ the perfect systems correspond to the quasi-definite moment functionals. 
\end{defn}

\subsection{Sequences of Linearly Independent Polynomials}\label{linear indepence of regular sequences}\hfill\\

We want to consider multiple orthogonal polynomials along various sequences of multi-indices in $\bbZ_+^r$. For our proofs in the next sections to work out we need linear independence of the polynomials along our chosen sequences. The following definition lists the most important sequences. 

\begin{defn}\label{def:forward sequences}
    Consider $\bm{n}, \bm{m} \in \N^{r}$ with $\bm{n} \leq \bm{m}$, and let $d = \abs{\bm{m}}-\abs{\bm{n}}$. 
    \begin{itemize}
        \item[\textnormal{(i)}] A sequence {of multi-indices} $\set{\bm{n}+\bm{s}_j}_{j = 0}^d$ is  {called} a path from $\bm{n}$ to $\bm{m}$ if for some $k_1,\dots,k_d \in \set{1,\dots,r}$,
        \nm{eq:path definition}{\bm{s}_0 = \bm{0}, \qquad \bm{s}_j = \bm{s}_{j-1} + \bm{e}_{k_j}, \qquad \bm{n}+\bm{s}_d = \bm{m}, \qquad j = 1,\dots,d.}
        \item[\textnormal{(ii)}] {We say that} $\set{\bm{n}+\bm{s}_j}_{j = 0}^d$ is an (increasing) frame from $\bm{n}$ towards $\bm{m}$ if it consists of {$\{\bm{n}+j\bm{e}_k: 0\le j\le m_k-n_k, 1\le k\le r\}$}
         ordered in such a way that $\abs{\bm{s}_i} \leq \abs{\bm{s}_j}$ if $i < j$, $j = 1,\dots,d$. 
        \item[\textnormal{(iii)}] {We say that} $\set{\bm{n}+\bm{s}_j}_{j = 0}^d$ is an admissible sequence from $\bm{n}$ towards $\bm{m}$ if for some $k_1,\dots,k_d \in \set{1,\dots,r}$,
        \nm{eq:regular sequence definition}{\bm{s}_0 = \bm{0}, \qquad (\bm{s}_i)_{k_j} < (\bm{s}_j)_{k_j}, \qquad i < j, \qquad \bm{n} \leq \bm{n} + \bm{s}_j \leq \bm{m}, \qquad j = 1,\dots,d.}
    \end{itemize}
    \end{defn}
\begin{rem}
    We stress that frames towards $\bm{m}$ do not pass through $\bm{m}$. Admissible sequences towards $\bm{m}$ may pass through $\bm{m}$ or not. For examples, see  Figure 1.
\end{rem}
\begin{rem}\label{rem:regular sequences}
    It is easy to verify that both paths from $\bm{n}$ to $\bm{m}$ and frames from $\bm{n}$ towards $\bm{m}$ are admissible sequences from $\bm{n}$ towards $\bm{m}$. 
    The main purpose of introducing \eqref{eq:regular sequence definition} is that it offers minimal restrictions on the sequences of multi-indices that suffice for our proofs. In particular, \eqref{eq:regular sequence definition} is the property that we use to prove linear independence for type I polynomials. 
    If a reader is not interested in generality, they can replace any admissible sequence with their favorite path, or a frame.
\end{rem}
    
    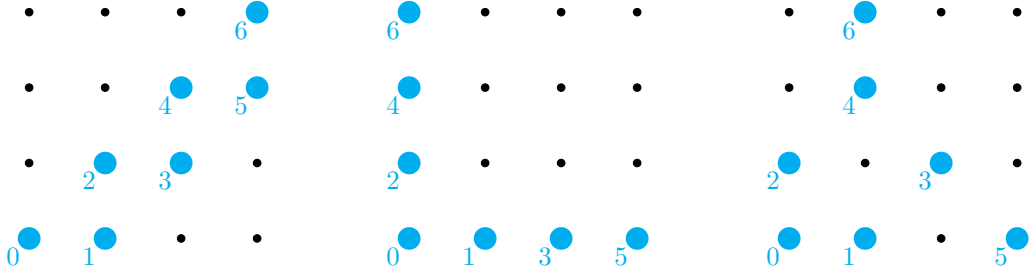
\begin{figure}[ht]
\begin{tikzpicture}\label{fig}

\foreach \x in {0,1,2,3,5,6,7,8,10,11,12,13}{
\foreach \y in {0,1,2,3}{
\node[draw,circle,inner sep=1pt,fill] at (\x,\y) {};}}
\fill[cyan] (0,0) circle[radius=1.5mm];
\fill[cyan] (1,0) circle[radius=1.5mm];
\fill[cyan] (1,1) circle[radius=1.5mm];
\fill[cyan] (2,1) circle[radius=1.5mm];
\fill[cyan] (2,2) circle[radius=1.5mm];
\fill[cyan] (3,2) circle[radius=1.5mm];
\fill[cyan] (3,3) circle[radius=1.5mm];

\draw[cyan] (0,0) node[anchor=north east] {$0$};
\draw[cyan] (1,0) node[anchor=north east] {$1$};
\draw[cyan] (1,1) node[anchor=north east] {$2$};
\draw[cyan] (2,1) node[anchor=north east] {$3$};
\draw[cyan] (2,2) node[anchor=north east] {$4$};
\draw[cyan] (3,2) node[anchor=north east] {$5$};
\draw[cyan] (3,3) node[anchor=north east] {$6$};

\fill[cyan] (5,0) circle[radius=1.5mm];
\fill[cyan] (6,0) circle[radius=1.5mm];
\fill[cyan] (7,0) circle[radius=1.5mm];
\fill[cyan] (8,0) circle[radius=1.5mm];
\fill[cyan] (5,1) circle[radius=1.5mm];
\fill[cyan] (5,2) circle[radius=1.5mm];
\fill[cyan] (5,3) circle[radius=1.5mm];

\draw[cyan] (5,0) node[anchor=north east] {$0$};
\draw[cyan] (6,0) node[anchor=north east] {$1$};
\draw[cyan] (5,1) node[anchor=north east] {$2$};
\draw[cyan] (7,0) node[anchor=north east] {$3$};
\draw[cyan] (5,2) node[anchor=north east] {$4$};
\draw[cyan] (8,0) node[anchor=north east] {$5$};
\draw[cyan] (5,3) node[anchor=north east] {$6$};

\fill[cyan] (10,0) circle[radius=1.5mm];
\fill[cyan] (11,0) circle[radius=1.5mm];
\fill[cyan] (10,1) circle[radius=1.5mm];
\fill[cyan] (12,1) circle[radius=1.5mm];
\fill[cyan] (11,2) circle[radius=1.5mm];
\fill[cyan] (13,0) circle[radius=1.5mm];
\fill[cyan] (11,3) circle[radius=1.5mm];

\draw[cyan] (10,0) node[anchor=north east] {$0$};
\draw[cyan] (11,0) node[anchor=north east] {$1$};
\draw[cyan] (10,1) node[anchor=north east] {$2$};
\draw[cyan] (12,1) node[anchor=north east] {$3$};
\draw[cyan] (11,2) node[anchor=north east] {$4$};
\draw[cyan] (13,0) node[anchor=north east] {$5$};
\draw[cyan] (11,3) node[anchor=north east] {$6$};

\end{tikzpicture} 
\caption{The following picture displays three different admissible sequences from $(0,0)$ towards $(3,3)$. To the left we have a path (the step-line). In the middle we have a frame from $(0,0)$ towards $(3,3)$. It is easy to verify from the picture that each of these sequences satisfy \eqref{eq:regular sequence definition}. For example, in the sequence to the right we have $k_1 = 1, k_2 = 2, k_3 = 1, k_4 = 2, k_5 = 1, k_6 = 2$.
}
\end{figure}

\begin{lem}\label{lem:linear independence modified paths type I}
    Assume $\bm{\mu}$ is a perfect system and $\bm{n},\bm{m}\in\N^r$. If $\bm{n} \leq \bm{m}$ and $\set{\bm{n}+\bm{s}_j}_{j=0}^{d}$ is an admissible sequence from $\bm{n}$ towards $\bm{m}$, then the set of type I vectors $\set{\bm{A}_{\bm{n}+\bm{s}_j}}_{j = 0}^d$ is linearly independent.
\end{lem}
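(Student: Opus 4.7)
The plan is to induct on the length $d$ of the admissible sequence, peeling off the last vector by isolating a single coordinate in which $\bm{A}_{\bm{n}+\bm{s}_d}$ is the unique maximizer of degree. Write $\bm{n}_j:=\bm{n}+\bm{s}_j$ and suppose $\sum_{j=0}^d c_j\bm{A}_{\bm{n}_j}=\bm{0}$, i.e.\ $\sum_{j=0}^d c_j A^{(k)}_{\bm{n}_j}\equiv 0$ for every $k\in\{1,\dots,r\}$.

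The crux is to inspect the $k_d$-th component, where $k_d$ is the index provided by the admissibility condition~\eqref{eq:regular sequence definition}. That condition yields $(\bm{s}_i)_{k_d}<(\bm{s}_d)_{k_d}$ for every $i<d$, whence
\[
\deg A^{(k_d)}_{\bm{n}_i}\le n_{k_d}+(\bm{s}_i)_{k_d}-1\le n_{k_d}+(\bm{s}_d)_{k_d}-2 \qquad (i<d).
\]
On the other hand, since $\bm{\mu}$ is perfect, condition~(v) of Lemma~\ref{lem:normality lemma} applied to the normal index $\bm{n}_d-\bm{e}_{k_d}$ (or, in the degenerate case $\bm{n}_d=\bm{e}_{k_d}$, a direct inspection of the type~I definition) produces the exact equality
\[
\deg A^{(k_d)}_{\bm{n}_d}= n_{k_d}+(\bm{s}_d)_{k_d}-1,
\]
with nonzero leading coefficient. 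Matching the coefficient of $x^{n_{k_d}+(\bm{s}_d)_{k_d}-1}$ in the $k_d$-th component of the assumed dependence relation therefore forces $c_d=0$.

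Once $c_d=0$, the truncated sequence $\{\bm{n}+\bm{s}_j\}_{j=0}^{d-1}$ is still admissible from $\bm{n}$ towards $\bm{m}$ (witnessed by $k_1,\dots,k_{d-1}$), so the induction hypothesis yields $c_0=\cdots=c_{d-1}=0$; the base case $d=0$ reduces to the nonvanishing of $\bm{A}_{\bm{n}}$. I do not anticipate a serious obstacle: the admissibility condition~\eqref{eq:regular sequence definition} has been engineered precisely so that at each step one can single out a coordinate in which the new type~I vector strictly dominates all earlier ones in degree, after which linear independence collapses to leading-coefficient bookkeeping via Lemma~\ref{lem:normality lemma}(v). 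The only delicate point is to verify that condition~(v) is applicable, i.e.\ that $(\bm{n}_d)_{k_d}\ge 1$; this is automatic since $(\bm{s}_0)_{k_d}=0<(\bm{s}_d)_{k_d}$ gives $(\bm{s}_d)_{k_d}\ge 1$.
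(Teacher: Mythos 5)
Your proof is correct and is essentially the paper's argument: both isolate the $k_l$-th component singled out by the admissibility condition \eqref{eq:regular sequence definition} and use Lemma \ref{lem:normality lemma}(v) to see that the last surviving vector strictly dominates all earlier ones in degree there, forcing its coefficient to vanish. You merely organize this as an explicit induction on $d$ (and add the harmless checks that $(\bm{s}_d)_{k_d}\ge 1$ and that the truncated sequence remains admissible), whereas the paper runs the same leading-coefficient argument as a downward sweep over $l$.
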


\begin{proof}
    Consider the dependence equation
    \m{\sum_{j = 0}^{d}c_j\bm{A}_{\bm{n}+\bm{s}_j} = \bm{0},}
    and suppose $c_j = 0$ for $j > l$. By \eqref{eq:regular sequence definition} and Lemma \ref{lem:normality lemma}(v) we have
    \m{\deg{A_{\bm{n}+\bm{s}_j}^{(k_l)}} < \deg{A_{\bm{n}+\bm{s}_l}^{(k_l)}}, \qquad j < l,} 
    Hence $c_l = 0$, so we must have $c_0 = \dots = c_d = 0$, and linear independence follows.
\end{proof}

For linear independence of type II polynomials, we need admissible sequences that go backwards. 

\begin{defn}\label{def:backward sequences}
    Consider $\bm{n}, \bm{m} \in \N^{r}$ with $\bm{n} \geq \bm{m}$, and let $d =\abs{\bm{n}} -  \abs{\bm{m}}$. 
    \begin{itemize}
        \item[\textnormal{(i)}] {A} sequence {of multi-indices} $\set{\bm{n}-\bm{s}_j}_{j = 0}^d$ is {called} a path from $\bm{n}$ to $\bm{m}$ if for some $k_1,\dots,k_d \in \set{1,\dots,r}$,
        \nm{eq:path definition backwards}{\bm{s}_0 = \bm{0}, \qquad \bm{s}_j = \bm{s}_{j-1} + \bm{e}_{k_j}, \qquad \bm{n}-\bm{s}_d = \bm{m}, \qquad j = 1,\dots,d.}
        \item[\textnormal{(ii)}] {We say that } $\set{\bm{n}-\bm{s}_j}_{j = 0}^d$ is a (decreasing) frame from $\bm{n}$ towards $\bm{m}$ if it consists of {$\{\bm{n}-j\bm{e}_k: 0\le j\le n_k-m_k, 1\le k\le r\}$}
        ordered in such a way that $\abs{\bm{s}_i} \leq \abs{\bm{s}_j}$ if $i < j$, $j = 1,\dots,d$. 
        \item[\textnormal{(iii)}] {We say that} $\set{\bm{n}-\bm{s}_j}_{j = 0}^d$ is an admissible sequence from $\bm{n}$ towards $\bm{m}$ if for some $k_1,\dots,k_d \in \set{1,\dots,r}$,
        \nm{eq:regular sequence definition backwards}{\bm{s}_0 = \bm{0}, \qquad (\bm{s}_i)_{k_j} < (\bm{s}_j)_{k_j}, \qquad i < j, \qquad \bm{m} \leq \bm{n} - \bm{s}_j \leq \bm{n}, \qquad j = 1,\dots,d.}
    \end{itemize}
    \end{defn}

\begin{lem}\label{lem:linear independence modified paths type II}
    Assume $\bm{\mu}$ is a perfect system and $\bm{n},\bm{m}\in\N^r$. If $\bm{n} \geq \bm{m}$ and $\set{\bm{n}-\bm{s}_j}_{j=0}^{d}$ is an admissible sequence from $\bm{n}$ to $\bm{m}$, then the  set of type II polynomials $\set{P_{\bm{n}-\bm{s}_j}}_{j=0}^d$ is linearly independent.
\end{lem}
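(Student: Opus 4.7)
The plan is to imitate the argument of Lemma~\ref{lem:linear independence modified paths type I}, replacing the use of normality criterion (v) in Lemma~\ref{lem:normality lemma} by its dual (vi). The essential observation is that in the type~II setting the polynomials $P_{\bm{n}-\bm{s}_j}$ all have the same total degree, so they cannot be separated by leading coefficients; instead one separates them by the number of $\mu_k$-moments they annihilate. In effect, the role played in type~I by the $k_l$-th coordinate of the vector of polynomials is played in type~II by the index of the measure against which we test.

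Concretely, suppose $\sum_{j=0}^{d} c_j P_{\bm{n}-\bm{s}_j}=0$ and let $l$ be the largest index with $c_l\neq 0$. If $l=0$, then $c_0 P_{\bm{n}}=0$ forces $c_0=0$ (type~II polynomials are nonzero by definition), a contradiction. If $l\ge 1$, set $k:=k_l$ and $p:=(\bm{n}-\bm{s}_l)_k$, and apply the linear functional $f\mapsto \mu_k[f(x)\,x^p]$ to the dependence relation. For every $j<l$, the admissibility inequality $(\bm{s}_j)_k<(\bm{s}_l)_k$ from~\eqref{eq:regular sequence definition backwards} gives $(\bm{n}-\bm{s}_j)_k\ge p+1$, so the orthogonality relations~\eqref{eq:orthogonality conditions type II} for $P_{\bm{n}-\bm{s}_j}$ annihilate that term. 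For $j=l$, the index $\bm{n}-\bm{s}_l+\bm{e}_k$ is normal (by perfectness), so criterion (vi) of Lemma~\ref{lem:normality lemma} applied at that index yields $\mu_k[P_{\bm{n}-\bm{s}_l}(x)\,x^p]\neq 0$. Hence $c_l=0$, contradicting the choice of $l$.

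I do not anticipate any serious obstacle; the only point requiring care is the direction of the ``one-step shift'' in the invoked normality criterion. The type~I proof applies (v) directly at $\bm{n}+\bm{s}_l$ to promote the degree inequality $\deg A_{\bm{n}+\bm{s}_l}^{(k_l)}\le n_{k_l}+(\bm{s}_l)_{k_l}-1$ to an equality, whereas the above argument applies (vi) at the shifted index $\bm{n}-\bm{s}_l+\bm{e}_k$ to promote the orthogonality of $P_{\bm{n}-\bm{s}_l}$ at exponents $\le (\bm{n}-\bm{s}_l)_k-1$ to a non-vanishing at exponent exactly $(\bm{n}-\bm{s}_l)_k$. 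Perfectness of $\bm{\mu}$ ensures that whichever neighboring index appears is normal, so no boundary cases complicate the argument.
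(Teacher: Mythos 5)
Your proof is correct and follows essentially the same route as the paper's: test the dependence relation against $\mu_{k_l}[\,\cdot\,x^{(\bm{n}-\bm{s}_l)_{k_l}}]$, kill the terms with $j<l$ via the admissibility inequality and the type II orthogonality conditions, and use Lemma~\ref{lem:normality lemma}(vi) at the index $\bm{n}-\bm{s}_l+\bm{e}_{k_l}$ to get the non-vanishing of the $l$-th term. Your explicit treatment of the $l=0$ case (where no $k_0$ is defined, but nonvanishing of $P_{\bm{n}}$ suffices) is a small point the paper glosses over, but otherwise the arguments coincide.
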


\begin{proof}
    Consider the dependence equation
    \m{\sum_{j = 0}^{d}c_jP_{\bm{n}-\bm{s}_j} = 0,}
    with $c_j = 0$ for $j > l$. By \eqref{eq:regular sequence definition backwards} and Lemma \ref{lem:normality lemma}(vi) we have
    \m{\mu_{k_l}[P_{\bm{n}-\bm{s}_l}(x)x^{(\bm{n}-\bm{s}_l)_{k_l}}] \neq 0, \qquad \mu_{k_l}[P_{\bm{n}-\bm{s}_j}(x)x^{(\bm{n}-\bm{s}_l)_{k_l}}] = 0, \qquad j < l.}
    Hence we get
    \m{0 = \mu_{k_l}\left[\sum_{j = 0}^{d}c_jP_{\bm{n}-\bm{s}_j}(x)x^{(\bm{n}-\bm{s}_l)_{k_l}}\right] = c_l\mu_{k_l}[P_{\bm{n}-\bm{s}_l}(x)x^{(\bm{n}-\bm{s}_l)_{k_l}}],}
    so we must have $c_l = 0$, and linear independence follows.
\end{proof}

\begin{rem}
    In \cite{ctpaper}, we used sequences $\set{\bm{n}+\bm{s}_j}_{j = 0}^{d}$ with $\abs{\bm{s}_j} = j$, $j = 0,\dots,d$. This definition is more straightforward than \eqref{eq:regular sequence definition} and \eqref{eq:regular sequence definition backwards}, but it is worse for us since it does not contain frames, which we view as the most natural sequences to use for the results of this paper (but it does still contain paths). For these sequences, the linear independence of type I and type II polynomials holds as well, but through (iii) and (iv) of Lemma \ref{lem:normality lemma} rather than (v) and (vi). These sequences are also not always admissible. 
\end{rem}

\section{Main Results}\label{the general formula}

\subsection{Rational Perturbations of Type I polynomials}\label{type I formula}\hfill\\

For a perfect system $\bm{\mu} = (\mu_1,\dots,\mu_r)$, we consider systems $\widetilde{\bm{\mu}} = (\widetilde{\mu}_1,\dots,\widetilde{\mu}_r)$ with $\Psi\widetilde{\mu}_j = \Phi_j\mu_j$ for polynomials $\Phi_j(x) = \prod_{k = 1}^{N_j}(x-z_{j,k})$ and $\Psi(x) = \prod_{k = 1}^{M}(x-w_k)$, $j = 1,\dots,r$. For now we assume that $\Phi_1,\dots,\Phi_r$ and $\Psi$ have only simple roots, but we allow the different polynomials to have common roots. 
We write $\bm{N}$ for the index $(N_1,\dots,N_r)$.

For the determinantal formula we need to consider Geronimus transforms $\widecheck{\bm{\mu}} = (\widecheck{\mu}_1,\dots,\widecheck{\mu}_r)$ of $\widetilde{\bm{\mu}}$, defined by $\Phi_j\widecheck{\mu}_j = \widetilde{\mu}_j$, $j = 1,\dots,r$. We define numbers $B_{\bm{n}}(w_1),\dots,B_{\bm{n}}(w_M)$, similarly to \eqref{eq:q rp one functional}, by 
\nm{eq:q type I}{B_{\bm{n}}(z) = \sum_{j = 1}^r\widecheck{\mu}_j\left[A_{\bm{n}}^{(j)}(x)\frac{\Psi(x)}{x-z}\right], \qquad z = w_1,\dots,w_M, \qquad \bm{n}\in\N^r.}
{We point out that in the simplest case when}
\begin{equation}
    \begin{cases}
        (a) \mbox{  } \mu_j \mbox{ is a positive measure};
        \\
        (b) \mbox{  } \{w_k\}_{k=1}^M \cap \supp\mu_j =\varnothing;
        \\
        (c) \mbox{  } \widetilde{\mu}_j\{w_k\}=0 \mbox{ for all } k=1,\ldots, M,
    \end{cases}
\end{equation}
hold true for every $j=1,\ldots, r$,
then $B_{\bm{n}}$ reduces to
\begin{equation}
    B_{\bm{n}}(z) = \sum_{j = 1}^r \int \frac{A_{\bm{n}}^{(j)}(x)}{x-z} d\mu_j(x).
\end{equation}
See also the discussion in the end of Section~\ref{moment functionals}, and Sections~\ref{examples},~\ref{ss:Laguerre} for other examples.

{To simplify the formulas below, we define the block matrices}
\nm{eq:type I blocks}{A_{\bm{n}}^{(j)}(\bm{z}_j) = \begin{pmatrix}
    A_{\bm{n}}^{(j)}(z_{j,1}) \\
    \vdots \\
    A_{\bm{n}}^{(j)}(z_{j,N_j})
\end{pmatrix}, \qquad B_{\bm{n}}(\bm{w}) = \begin{pmatrix}
    B_{\bm{n}}(w_1) \\
    \vdots \\
    B_{\bm{n}}(w_M)
\end{pmatrix}, \qquad j = 1,\dots,r, \qquad \bm{n} \in \N^r.}

\begin{thm}\label{thm:type I thm}
    Assume $\bm{\mu}$ is perfect. Let $\bm{n} \in \N^r$ with $\abs{\bm{n}} > M$. Consider an admissible sequence $\set{\bm{m}+\bm{s}_j}_{j = 0}^{\abs{\bm{N}}+M}$ from $\bm{m}$ towards $\bm{n}+\bm{N}$, for some $\bm{m} \in \N^r$ with $\abs{\bm{m}} = \abs{\bm{n}}-M$  and $\bm{m} \leq \bm{n}+\bm{N}$ (see Definition \ref{def:forward sequences} and Remark \ref{rem:regular sequences}). Then the determinantal expression
    \nm{eq:det formula rational type I}{\widetilde{A}_{\bm{n}}^{(j)}(x) = \Phi_j(x)^{-1}\det{\begin{pmatrix}
A_{\bm{m}}^{(j)}(x) & A_{\bm{m}+\bm{s}_{1}}^{(j)}(x) & \cdots & A_{\bm{m}+\bm{s}_{\abs{\bm{N}}+M}}^{(j)}(x) \\
A_{\bm{m}}^{(1)}(\bm{z}_1) & A_{\bm{m}+\bm{s}_{1}}^{(1)}(\bm{z}_1) & \cdots & A_{\bm{m}+\bm{s}_{\abs{\bm{N}}+M}}^{(1)}(\bm{z}_1) \\
\vdots & \vdots & \ddots & \vdots \\
A_{\bm{m}}^{(r)}(\bm{z}_{r}) & A_{\bm{m}+\bm{s}_{1}}^{(r)}(\bm{z}_r) & \cdots & A_{\bm{m}+\bm{s}_{\abs{\bm{N}}+M}}^{(r)}(\bm{z}_r) \\
B_{\bm{m}}(\bm{w}) & B_{\bm{m}+\bm{s}_{1}}(\bm{w}) & \cdots & B_{\bm{m}+\bm{s}_{\abs{\bm{N}}+M}}(\bm{w})
\end{pmatrix}}, \qquad j = 1\dots,r,}
defines a vector of polynomials $\bm{\widetilde{A}}_{\bm{n}}$ that satisfies the type I orthogonality condition \eqref{eq:orthogonality conditions type I} and the type I degree conditions \eqref{eq:degree conditions type I} for the index $\bm{n}$. The index $\bm{n}$ is normal for $\widetilde{\bm{\mu}}$ if and only if $D^{(I)}_{\bm{n}} \neq 0$, where
\nm{eq:dn rational type I}{D^{(I)}_{\bm{n}} = \det{\begin{pmatrix}
A_{\bm{m}+\bm{s}_{1}}^{(1)}(\bm{z}_1) & \cdots & A_{\bm{m}+\bm{s}_{\abs{\bm{N}}+M}}^{(1)}(\bm{z}_1) \\
\vdots & \ddots & \vdots \\
A_{\bm{m}+\bm{s}_{1}}^{(r)}(\bm{z}_r) & \cdots & A_{\bm{m}+\bm{s}_{\abs{\bm{N}}+M}}^{(r)}(\bm{z}_r) \\
B_{\bm{m}+\bm{s}_{1}}(\bm{w}) & \cdots & B_{\bm{m}+\bm{s}_{\abs{\bm{N}}+M}}(\bm{w})
\end{pmatrix}},}
and $\tfrac{1}{D^{(I)}_{\bm{n}}} \bm{\widetilde{A}}_{\bm{n}}$ is the normalized type I polynomial with respect to the system $\widetilde{\bm{\mu}}$. 

In the case $0 < \abs{\bm{n}} \leq M$ the same results hold if we put $\bm{A}_{\bm{m}+\bm{s}_j}(x) = \bm{0}$ and $B_{\bm{m}+\bm{s}_j}(z) = z^{M-\abs{\bm{n}}-j}$ for $j = 0,\dots,M-\abs{\bm{n}}$, and let $\set{\bm{m}+\bm{s}_j}_{M-\abs{\bm{n}}}^{\abs{\bm{N}}+M}$ be an admissible sequence from $\bm{0}$ towards $\bm{n}+\bm{N}$. If $\bm{n} = \bm{0}$ we have $\widetilde{\bm{A}}_{\bm{n}} = \bm{0}$.
\end{thm}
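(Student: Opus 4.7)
The plan is to verify four properties of the expression~\eqref{eq:det formula rational type I}: that $\widetilde{A}^{(j)}_{\bm{n}}$ is a polynomial, that $\deg \widetilde{A}^{(j)}_{\bm{n}} \le n_j - 1$, that $\widetilde{\bm{A}}_{\bm{n}}$ satisfies the type~I orthogonality for $\widetilde{\bm{\mu}}$, and that $D_{\bm{n}} \ne 0$ when $\bm{n}$ is normal. Polynomiality is immediate: substituting $x = z_{j,k}$ makes the first row coincide with the $k$-th row of the $A^{(j)}(\bm{z}_j)$ block, so the determinant vanishes, and since $\Phi_j$ has simple zeros this shows $\Phi_j(x)$ divides the determinant. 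The degree bound follows from cofactor expansion along the first row, using $\deg A^{(j)}_{\bm{m}+\bm{s}_i} \le m_j + (\bm{s}_i)_j - 1 \le n_j + N_j - 1$ along the admissible sequence.

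For the orthogonality, the key identities are $\Psi\widetilde{\mu}_j = \Phi_j\mu_j$ and $\widetilde{\mu}_j = \Phi_j\widecheck{\mu}_j$. I would decompose
\[
x^p = \Psi(x) Q(x) + \sum_{k=1}^M \frac{w_k^p}{\Psi'(w_k)}\cdot\frac{\Psi(x)}{x - w_k},\qquad \deg Q \le p-M,
\]
(with $Q = 0$ if $p < M$) and apply $\widetilde{\mu}_j[\widetilde{A}^{(j)}_{\bm{n}}\,\cdot\,]$ to each piece. The first identity converts the $\Psi Q$-piece into $\mu_j[\det A_j(x) \, Q(x)]$, and the second converts each $\Psi/(x-w_k)$-piece into $\widecheck{\mu}_j[\det A_j(x)\,\Psi(x)/(x-w_k)]$, where $\det A_j(x)$ denotes the determinant in~\eqref{eq:det formula rational type I}. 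Expanding $\det A_j(x) = \sum_i C_i A^{(j)}_{\bm{m}+\bm{s}_i}(x)$ with $x$- and $j$-independent cofactors $C_i$ and summing over $j$, the two contributions become $\sum_i C_i \sum_j \mu_j[A^{(j)}_{\bm{m}+\bm{s}_i}\, Q]$ and $\sum_i C_i B_{\bm{m}+\bm{s}_i}(w_k)$ respectively. The first vanishes by type~I orthogonality of $\bm{A}_{\bm{m}+\bm{s}_i}$ in $\bm{\mu}$ since $\deg Q \le |\bm{m}|-2 \le |\bm{m}+\bm{s}_i|-2$; the second is the cofactor expansion of a determinant whose first row duplicates the $k$-th entry of the $B(\bm{w})$ block, hence is zero. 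At $p = |\bm{n}|-1$ the same calculation leaves only the $i = 0$ term, giving $\sum_j \widetilde{\mu}_j[\widetilde{A}^{(j)}_{\bm{n}}\, x^{|\bm{n}|-1}] = D_{\bm{n}}$ via the normalization $\sum_j \mu_j[A^{(j)}_{\bm{m}}\, x^{|\bm{m}|-1}] = 1$.

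For the non-vanishing of $D_{\bm{n}}$ under normality, suppose instead $D_{\bm{n}} = 0$: then the columns of the matrix in~\eqref{eq:dn rational type I} are linearly dependent, yielding scalars $\alpha_1,\dots,\alpha_{|\bm{N}|+M}$, not all zero. Set $\bm{W} = \sum_{i\ge 1} \alpha_i \bm{A}_{\bm{m}+\bm{s}_i}$; the column dependences at the $A^{(j')}(\bm{z}_{j'})$-rows force $\Phi_{j'}\mid W^{(j')}$, so $V^{(j')} := W^{(j')}/\Phi_{j'}$ is a polynomial with $\deg V^{(j')}\le n_{j'}-1$. Running the orthogonality calculation of the previous paragraph with $\bm{V}$ in place of $\widetilde{\bm{A}}_{\bm{n}}$, the $\Psi Q$-piece vanishes because $|\bm{m}+\bm{s}_i|\ge|\bm{m}|+1$ for $i\ge 1$ (since the admissibility condition forces $(\bm{s}_i)_{k_i}\ge 1$) pushes the orthogonality threshold up by one, and the $\Psi/(x-w_k)$-piece vanishes directly from the column dependence at the $B(\bm{w})$-rows. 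Thus $\bm{V}$ is a type~I vector for $\widetilde{\bm{\mu}}$ at $\bm{n}$ with vanishing normalization value, which by Lemma~\ref{lem:normality lemma}(iii) forces $\bm{V}=\bm{0}$, hence $\bm{W}=\bm{0}$, contradicting the linear independence of the $\bm{A}_{\bm{m}+\bm{s}_i}$ along admissible sequences (Lemma~\ref{lem:linear independence modified paths type I}). This simultaneously gives $\widetilde{\bm{A}}_{\bm{n}}=D_{\bm{n}}\widetilde{\bm{A}}^*$ where $\widetilde{\bm{A}}^*$ is the normalized type~I vector for $\widetilde{\bm{\mu}}$.

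For the case $|\bm{n}|\le M$ the same scheme applies with the padded entries $\bm{A}_{\bm{m}+\bm{s}_j}=\bm{0}$ and $B_{\bm{m}+\bm{s}_j}(z)=z^{M-|\bm{n}|-j}$ for $0\le j\le M-|\bm{n}|$; the padded $B$-entries are chosen so that the Lagrange residue identity $\sum_k w_k^q/\Psi'(w_k) = \delta_{q,M-1}$ combines with the cofactors $C_i$ to give $0$ for $0\le p\le|\bm{n}|-2$ and to recover the normalization $D_{\bm{n}}$ (up to sign bookkeeping absorbed into the determinant convention) at $p=|\bm{n}|-1$, while the padded $\bm{A}=\bm{0}$ entries drop out of the type~I orthogonality reduction so the argument of the third paragraph still gives $D_{\bm{n}}\ne 0$. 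The main obstacle of the proof is the non-vanishing of $D_{\bm{n}}$: the trick is to manufacture from a hypothetical column dependence the auxiliary vector $\bm{V}$, which satisfies one extra orthogonality relation beyond those required at index $\bm{n}$ and thereby triggers the contradiction with normality via Lemma~\ref{lem:normality lemma}(iii) and Lemma~\ref{lem:linear independence modified paths type I}.
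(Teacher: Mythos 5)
Your proof is correct and follows essentially the same route as the paper's: divisibility and degree counts from the determinant structure, the partial-fraction splitting of $x^p$ into a multiple of $\Psi$ plus combinations of $\Psi(x)/(x-w_k)$ (your explicit residues $w_k^p/\Psi'(w_k)$ are exactly the paper's $\alpha_{p,k}$), cofactor expansion producing repeated rows, and the non-vanishing of $D_{\bm{n}}$ by manufacturing a type~I vector with one extra orthogonality relation from a hypothetical column dependence. The only cosmetic difference is that you package the two orthogonality checks (against $\Psi(x)x^p$ and against $x^p$ with $p<M$) into a single decomposition, and your treatment of the $\abs{\bm{n}}\le M$ case is sketchier but rests on the same Lagrange residue identity \eqref{eq:magic type I}.
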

\begin{rem}
    This criterion, which states that $\bm{n}$ is normal if and only if $D^{(I)}_{\bm{n}} \neq 0$, improves our result~\cite[Thm 3.14]{ctpaper}, ~\cite[Thm 3.15]{ctpaper} for Christoffel transforms, where only one direction was established (normality of $\bm{n}$ implies $D^{(I)}_{\bm{n}} \neq 0$). 
\end{rem}
    \begin{figure}[ht]\label{type I fig}
\begin{tikzpicture}

\foreach \x in {0,1,2,3,5,6,7,8,10,11,12,13}{
\foreach \y in {0,1,2,3}{
\node[draw,circle,inner sep=1pt,fill] at (\x,\y) {};}}
\fill[cyan] (0,0) circle[radius=1.5mm];
\fill[cyan] (1,0) circle[radius=1.5mm];
\fill[cyan] (2,0) circle[radius=1.5mm];
\fill[cyan] (3,0) circle[radius=1.5mm];
\fill[cyan] (0,1) circle[radius=1.5mm];
\fill[cyan] (0,2) circle[radius=1.5mm];
\fill[cyan] (0,3) circle[radius=1.5mm];
\fill[blue] (1,1) circle[radius=0.9mm];
\fill[blue] (0,0) circle[radius=0.9mm];
\fill[blue] (3,3) circle[radius=0.9mm];

\draw[cyan] (0,0) node[anchor=north east] {$0$};
\draw[cyan] (1,0) node[anchor=north east] {$1$};
\draw[cyan] (0,1) node[anchor=north east] {$2$};
\draw[cyan] (2,0) node[anchor=north east] {$3$};
\draw[cyan] (0,2) node[anchor=north east] {$4$};
\draw[cyan] (3,0) node[anchor=north east] {$5$};
\draw[cyan] (0,3) node[anchor=north east] {$6$};
\draw[blue] (1,1) node[anchor=south west] {$\bm{n}$};
\draw[blue] (0,0) node[anchor=south west] {$\bm{m}$};
\draw[blue] (3,3) node[anchor=south west] {$\bm{n}+\bm{N}$};

\fill[cyan] (5,0) circle[radius=1.5mm];
\fill[cyan] (6,0) circle[radius=1.5mm];
\fill[cyan] (6,1) circle[radius=1.5mm];
\fill[cyan] (7,1) circle[radius=1.5mm];
\fill[cyan] (8,1) circle[radius=1.5mm];
\fill[cyan] (6,2) circle[radius=1.5mm];
\fill[cyan] (6,3) circle[radius=1.5mm];
\fill[blue] (6,1) circle[radius=0.9mm];
\fill[blue] (5,0) circle[radius=0.9mm];
\fill[blue] (8,3) circle[radius=0.9mm];

\draw[cyan] (5,0) node[anchor=north east] {$0$};
\draw[cyan] (6,0) node[anchor=north east] {$1$};
\draw[cyan] (6,1) node[anchor=north east] {$2$};
\draw[cyan] (7,1) node[anchor=north east] {$3$};
\draw[cyan] (6,2) node[anchor=north east] {$4$};
\draw[cyan] (8,1) node[anchor=north east] {$5$};
\draw[cyan] (6,3) node[anchor=north east] {$6$};
\draw[blue] (6,1) node[anchor=south west] {$\bm{n}$};
\draw[blue] (5,0) node[anchor=south west] {$\bm{m}$};
\draw[blue] (8,3) node[anchor=south west] {$\bm{n}+\bm{N}$};

\fill[cyan] (10,0) circle[radius=1.5mm];
\fill[cyan] (11,0) circle[radius=1.5mm];
\fill[cyan] (11,1) circle[radius=1.5mm];
\fill[cyan] (12,1) circle[radius=1.5mm];
\fill[cyan] (13,1) circle[radius=1.5mm];
\fill[cyan] (13,2) circle[radius=1.5mm];
\fill[cyan] (13,3) circle[radius=1.5mm];
\fill[blue] (11,1) circle[radius=0.9mm];
\draw[blue] (11,1) node[anchor=south west] {$\bm{n}$};
\fill[blue] (10,0) circle[radius=0.9mm];
\fill[blue] (13,3) circle[radius=0.9mm];

\draw[cyan] (10,0) node[anchor=north east] {$0$};
\draw[cyan] (11,0) node[anchor=north east] {$1$};
\draw[cyan] (11,1) node[anchor=north east] {$2$};
\draw[cyan] (12,1) node[anchor=north east] {$3$};
\draw[cyan] (13,1) node[anchor=north east] {$4$};
\draw[cyan] (13,2) node[anchor=north east] {$5$};
\draw[cyan] (13,3) node[anchor=north east] {$6$};
\draw[blue] (10,0) node[anchor=south west] {$\bm{m}$};
\draw[blue] (13,3) node[anchor=south west] {$\bm{n}+\bm{N}$};

\end{tikzpicture} 
\caption{The following picture displays three different sequences that can be considered in Theorem \ref{thm:type I thm} in the case $r = 2$ with $\deg{\Phi_1} = \deg{\Phi_2} = \deg{\Psi} = 2$, and the choice $\bm{m} = (n_1-1,n_2-1)$. To the left we have 
a frame from $\bm{m}$ towards $\bm{n}+\bm{N}$. We consider this to be a natural choice, even though the sequence does not include $\bm{n}$ (although in the most important case $\Psi = 1$ the sequence would start at $\bm{n}$). Each of the three sequences here have natural generalizations to any choice of $\Phi_1,\dots,\Phi_r$ and $\Psi$. 
}
\end{figure}
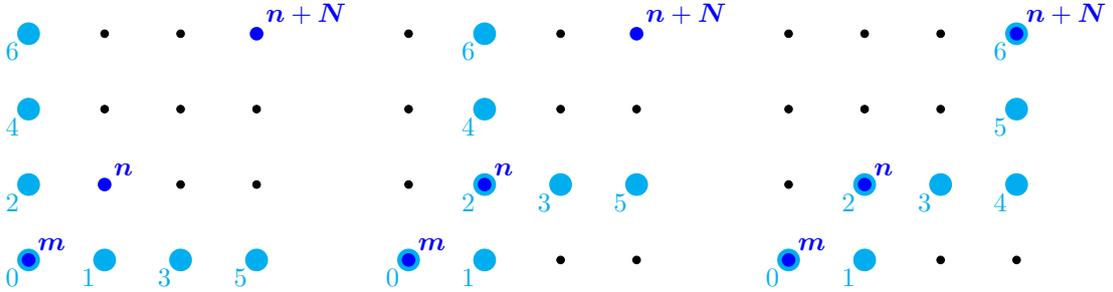

\begin{proof}
    We first prove the result in the case $\abs{\bm{n}} > M$. 
    {Note that the right-hand side of~\eqref{eq:det formula rational type I} is a polynomial. Denote it by $\widetilde{A}_{\bm{n}}^{(j)}$, and let us show that these polynomials are indeed the type I multiple orthogonal polynomials for $\widetilde{\bm{\mu}}$.} Since each element of the sequence $\set{\bm{m}+\bm{s}_j}_{j = 0}^{\abs{\bm{N}}+M}$ is below $\bm{n}+\bm{N}$, we have $\deg{\widetilde{A}_{\bm{n}}^{(j)}} \leq (n_j+N_j-1)-N_j = n_j - 1$ for each $j = 1,\dots,r$. Since the sequence is above $\bm{m}$, we have, by $|\bm{m}| = |\bm{n}|-M$, $\Psi\widetilde{\mu}_j = \Phi_j{\mu}_j$ and \eqref{eq:det formula rational type I},  
    \m{\sum_{j = 1}^{r}\widetilde{\mu}_j[\widetilde{A}_{\bm{n}}^{(j)}(x)\Psi(x)x^p] = \sum_{j = 1}^{r}\mu_j[\Phi_j(x)\widetilde{A}_{\bm{n}}^{(j)}(x)x^p] = 0, \qquad p = 0,\dots,\abs{\bm{n}}-M-2.}
    If $p < M$, we apply the partial fraction decomposition to $x^p/\Psi(x)$ to write
    \nm{eq:type I partial fraction}{x^p = \sum_{k = 1}^M \alpha_{p,k} \frac{\Psi(x)}{x-w_k}.}
    From this we get
    \m{\sum_{j = 1}^{r}\widetilde{\mu}_j[\widetilde{A}_{\bm{n}}^{(j)}(x)x^p] = \sum_{k = 1}^M\alpha_{p,k}\sum_{j = 1}^{r}  \widetilde{\mu}_j\left[\widetilde{A}_{\bm{n}}^{(j)}(x)\frac{\Psi(x)}{x-w_k}\right] = \sum_{k = 1}^M \alpha_{p,k} \sum_{j = 1}^{r}\widecheck{\mu}_j\left[\Phi_j(x)\widetilde{A}_{\bm{n}}^{(j)}(x)\frac{\Psi(x)}{x-w_k}\right] = 0,}
    since we get the determinant in \eqref{eq:det formula rational type I} but with the first row replaced by a linear combination of the last $M$ rows. Since every polynomial of degree $\leq \abs{\bm{n}} - 2$ is in the span of $\set{x^p}_{p=0}^{M-1}\cup\set{\Psi(x)x^p}_{p=0}^{\abs{\bm{n}}-M-2}$, we end up with
    \m{\sum_{j = 1}^{r}\widetilde{\mu}_j[\widetilde{A}_{\bm{n}}^{(j)}(x)x^p] = 0, \qquad p = 0,\dots,\abs{\bm{n}}-2.}
    The sequence starts at $\bm{m} \neq \bm{0}$ and increases, so we also have
    \m{\sum_{j = 1}^{r}\widetilde{\mu}_j[\widetilde{A}_{\bm{n}}^{(j)}(x)x^{\abs{\bm{n}}-1}] = \sum_{j = 1}^{r}\widetilde{\mu}_j[\widetilde{A}_{\bm{n}}^{(j)}(x)\Psi(x)x^{\abs{\bm{n}}-M-1}] = \sum_{j = 1}^{r}\mu_j[\Phi_j(x)\widetilde{A}_{\bm{n}}^{(j)}(x)x^{\abs{\bm{n}}-M-1}] = D^{(I)}_{\bm{n}}.}
    
    It remains to prove that $D^{(I)}_{\bm{n}} \neq 0$ if and only if $\bm{n}$ is normal for $\widetilde{\bm{\mu}}$. We have $D^{(I)}_{\bm{n}} = 0$ if and only if there are constants $c_1,\dots,c_{\abs{\bm{N}}+M}$, not all zero, such that
\begin{align}
\label{eq:zerosDivisibility1}
& \sum_{i = 1}^{\abs{\bm{N}}+M} c_iA_{\bm{m}+\bm{s}_{i}}^{(j)}(x) = 0, \qquad x = z_{j,1},\dots,z_{j,N_j}, \qquad j = 1,\dots,r, \\\label{eq:QzerosDivisibility1}
& \sum_{i = 1}^{\abs{\bm{N}}+M} c_iB_{\bm{m}+\bm{s}_{i}}(x) = 0, \qquad x = w_1,\dots,w_M.
\end{align}
Now suppose $D^{(I)}_{\bm{n}} = 0$ and choose $c_1,\dots,c_{N+\abs{\bm{M}}}$ as above. Consider the vector $\bm{A} = (A^{(1)},\dots,A^{(r)})$ given by
\nm{eq:general solution vector type I}{A^{(j)}(x) = \Phi_j(x)^{-1}\sum_{i = 1}^{\abs{\bm{N}}+M} c_i A_{\bm{m}+\bm{s}_{i}}^{(j)}(x), \qquad j = 1,\dots,r.}
{By~\eqref{eq:zerosDivisibility1}, each $A^{(j)}$ is a polynomial.} Since our sequence of indices is admissible, Lemma \ref{lem:linear independence modified paths type I} implies that $\bm{A} \not\equiv 0$. Since every index in the sum is below $\bm{n}+\bm{N}$ we have $\deg{A}^{(j)} \leq  n_j - 1$ for each $j = 1,\dots,r$. Since every index is also strictly above $\bm{m}$ we have, by similar arguments as before,
\nm{eq:Dn zero type I}{\sum_{j = 1}^r \widetilde{\mu}_j[A^{(j)}(x)x^p] = 0, \qquad p = 0,\dots,\abs{\bm{n}}-1.}
Hence $\bm{A}$ is a type I polynomial for the index $\bm{n}$ with respect to $\widetilde{\bm{\mu}}$, so $\bm{n}$ cannot be normal for $\widetilde{\bm{\mu}}$.

Conversely, suppose $\bm{n}$ is not normal for $\widetilde{\bm{\mu}}$. Then there is some vector $\bm{A} = (A^{(1)},\dots,A^{(r)}) \not\equiv \bm{0}$ {with $\deg A^{(j)}\le n_j-1$} such that \eqref{eq:Dn zero type I} holds. As we have seen, this implies
\begin{equation*}
    \sum_{j = 1}^r\mu_j[\Phi_j(x)A^{(j)}(x)x^p] = 0, \qquad p = 0,\dots,\abs{\bm{n}}-M-1.
\end{equation*}

{Note that $\deg \Phi_j(x)A^{(j)}(x) \le n_j+N_j-1$.}
For vectors $\bm{C} = (C^{(1)},\dots,C^{(r)})$ of polynomials $C^{(j)}(x) = \sum_{k = 0}^{n_j+N_j-1}\kappa_{j,k}x^{k}$, $j = 1,\dots,r$, consider the orthogonality relations
\begin{equation}\label{eq:anotherOne}
\sum_{j = 1}^r\mu_j[C^{(j)}(x)x^p] = 0, \qquad p = 0,\dots,\abs{\bm{n}}-M-1.
\end{equation}
Since $\abs{\bm{n}}-M=\abs{\bm{m}}$, this is an $\abs{\bm{m}}\times(\abs{\bm{n}}+\abs{\bm{N}})$ system of equations. The coefficient matrix of this system is given by $H_{\bm{m}}$ with $\abs{\bm{n}}+\abs{\bm{N}}-\abs{\bm{m}}=\abs{\bm{N}}+M$ columns added. Since $\bm{m}$ is normal for $\bm{\mu}$ the rank of this system is equal to $\abs{\bm{m}}$, so the solution space has dimension $\abs{\bm{N}}+M$. Note that each $\bm{A}_{\bm{m}+\bm{s}_i}$, $i = 1,\dots,\abs{\bm{N}}+M$, has degree $\le n_j+N_j-1$ and solves~\eqref{eq:anotherOne}. These vectors are linearly independent, by Lemma \ref{lem:linear independence modified paths type I}, so every solution is a linear combination of these vectors. The non-zero vector $(\Phi_1A^{(1)},\dots,\Phi_rA^{(r)})$ is also a solution, so we get \eqref{eq:general solution vector type I} for some $(c_1,\dots,c_{\abs{\bm{N}}+M}) \neq (0,\dots,0)$. This implies \eqref{eq:zerosDivisibility1} since $\Phi_j(z_{j,k}) = 0$. We also get \eqref{eq:QzerosDivisibility1}, since 
\begin{equation*}
    \sum_{i = 1}^{\abs{\bm{N}}+M} c_iB_{\bm{m}+\bm{s}_{i}}(w_k) = \sum_{j = 1}^r \widecheck{\mu}_j\left[\Phi_j(x)A^{(j)}(x)\frac{\Psi(x)}{x-w_k} \right] = \sum_{j = 1}^r \widetilde{\mu}_j\left[A^{(j)}(x)\frac{\Psi(x)}{x-w_k} \right] = 0,
\end{equation*}
by \eqref{eq:type I blocks} and $M-1 \leq \abs{\bm{n}}-2$. Since \eqref{eq:zerosDivisibility1}-\eqref{eq:QzerosDivisibility1} holds, we must have $D^{(I)}_{\bm{n}} = 0$. 

Now consider the case $\abs{\bm{n}} \leq M$. We want to check the orthogonality conditions with respect to $\widetilde{\bm{\mu}}$ against $x^p$ with $p \leq \abs{\bm{n}}-1$. Hence $p < M$, so we again consider the decomposition \eqref{eq:type I partial fraction}. We use the identity
\nm{eq:identity}{\frac{x^q}{x-z} = x^{q-1} + zx^{q-2} + \cdots + z^{q-1} + \frac{z^q}{x-z},}
substituted into \eqref{eq:type I partial fraction} multiplied with $x^q$ to get
\m{\frac{x^{p+q}}{\Psi(x)} = x^{q-1}\sum_{k = 1}^{M}\alpha_{p,k} + x^{q-2}\sum_{k = 1}^{M}\alpha_{p,k} w_k + \cdots + \sum_{k = 1}^{M}\alpha_{p,k} w_k^{q-1} + \sum_{k = 1}^{M}\alpha_{p,k}\frac{w_k^q}{x-w_k}.}
For $q = M - p$ the polynomial part of the left hand side is equal to one. Comparing with the right hand side we obtain
\nm{eq:magic type I}{\sum_{k = 1}^{M}\alpha_{p,k} w_k^q = \begin{cases}
    0, \qquad q = 0,\dots,M-p-2, \\
    1, \qquad q = M-p-1.
\end{cases}.}
Again, since $p < M$, we can write
\m{\sum_{j = 1}^{r}\widetilde{\mu}_j[\widetilde{A}_{\bm{n}}^{(j)}(x)x^p] = \sum_{k = 1}^M\alpha_{p,k} \sum_{j = 1}^{r}\widecheck{\mu}_j\left[\Phi_j(x)\widetilde{A}_{\bm{n}}^{(j)}(x)\frac{\Psi(x)}{x-w_k}\right]}
to get the determinant in \eqref{eq:det formula rational type I} but with each of the last $\abs{\bm{N}} + \abs{\bm{n}}$ elements in the first row replaced with a linear combination of each of the last $\abs{\bm{N}} + \abs{\bm{n}}$ elements of the last $M$ rows. Use this to get the last $\abs{\bm{N}} + \abs{\bm{n}}$ elements in the first row of the determinant equal to $0$ through elementary row operations. Then we get $-\sum_{k = 1}^{M}\alpha_{p,k} w_k^{M-\abs{n}-j}$ in entry $j+1$ of the first row for $j = 0,\dots,M-\abs{\bm{n}}$. For $p \leq \abs{\bm{n}}-2$ this is $0$ by \eqref{eq:magic type I}, since here $M-p-2 \geq M-\abs{\bm{n}}$, which means that all the entries in the first row of the determinant are now $0$. Hence we get
\m{\sum_{j = 1}^{r}\widetilde{\mu}_j[\widetilde{A}_{\bm{n}}^{(j)}(x)x^p] = 0, \qquad p = 0,\dots,\abs{\bm{n}}-2.}
For $p = \abs{\bm{n}}-1$ the same argument shows that the first row is now equal to $\begin{pmatrix}
    1 & 0 & \cdots & 0
\end{pmatrix}$. Now expand along the first row, and since the other rows remain unchanged from \eqref{eq:det formula rational type I} we get
\m{\sum_{j = 1}^{r}\widetilde{\mu}_j[\widetilde{A}_{\bm{n}}^{(j)}(x)x^p] = D^{(I)}_{\bm{n}}.}

{Similar arguments then show that $D^{(I)}_{\bm{n}} \neq 0$ when $\bm{n}$ is normal for $\widetilde{\bm{\mu}}$.}
The main difference is in showing that $\bm{A}$ in~\eqref{eq:general solution vector type I} is non-zero. If $\bm{A} = \bm{0}$, then by Lemma~\ref{lem:linear independence modified paths type I} we get $c_i=0$ for $i> M-\abs{\bm{n}}$.
Since the coefficient matrix of the system of equations \eqref{eq:QzerosDivisibility1} with respect to the unknowns $c_1,\ldots,c_{M-\abs{\bm{n}}}$ is a Vandermonde-type $M\times (M-|\bm{n}|)$ matrix, with at least as many rows as columns, we obtain $c_1 = \cdots = c_{M-\abs{\bm{n}}} = 0$. Hence $\bm{A}\ne \bm{0}$ in~\eqref{eq:general solution vector type I}. The rest of the proof works out similarly. 
\end{proof}

\subsection{Rational Perturbations of Type II polynomials}\label{type II formula}\hfill\\

For the type II case, we consider systems $\widetilde{\bm{\mu}} = (\widetilde{\mu}_1,\dots,\widetilde{\mu}_r)$ where $\Psi_j\widetilde{\mu}_j = \Phi\mu_j$ for polynomials $\Phi(x) = \prod_{k = 1}^{N}(x-z_{k})$ and $\Psi_j(x) = \prod_{k = 1}^{M_j}(x-w_{j,k})$, $j = 1,\dots,r$. Again we assume these polynomials only have simple zeros, and let $\bm{M} = (M_1,\dots,M_r)$.

The Geronimus transforms $\widecheck{\bm{\mu}} = (\widecheck{\mu}_1,\dots,\widecheck{\mu}_r)$ are now defined by $\Phi\widecheck{\bm{\mu}} = \widetilde{\bm{\mu}}$, and we define vectors of numbers $\bm{Q}_{\bm{n}}(z) = (Q_{\bm{n}}^{(1)}(z),\dots,Q_{\bm{n}}^{(r)}(z))$ by 
\nm{eq:q type II}{Q_{\bm{n}}^{(j)}(z) = \widecheck{\mu}_j\left[P_{\bm{n}}(x)\frac{\Psi_j(x)}{x-z}\right], \qquad z = w_{j,1},\dots,w_{j,M_j}, \qquad j = 1,\dots,r, \qquad \bm{n} \in \N^r.}

{We point out that in the simplest case when, for some $j=1,\ldots,r$,}
\begin{equation}
    \begin{cases}
        (a) \mbox{  } \mu_j \mbox{ is a positive measure};
        \\
        (b) \mbox{  } \{w_{j,k}\}_{k=1}^{M_j} \cap \supp\mu_j =\varnothing;
        \\
        (c) \mbox{  } \widetilde{\mu}_j\{w_{j,k}\}=0 \mbox{ for all } k=1,\ldots, M_j,
    \end{cases}
\end{equation}
then $Q^{(j)}_{\bm{n}}$ reduces to
\begin{equation}
    Q^{(j)}_{\bm{n}}(z) = \int \frac{P_{\bm{n}}(x)}{x-z} d\mu_j(x).
\end{equation}
See also the discussion in the end of Section~\ref{moment functionals} and Sections~\ref{examples},~\ref{ss:Laguerre} for other examples.

We will make use of the block matrices
\nm{eq:type II blocks}{P_{\bm{n}}(\bm{z}) = \begin{pmatrix}
    P_{\bm{n}}(z_1) \\
    \vdots \\
    P_{\bm{n}}(z_N)
\end{pmatrix}, \qquad Q_{\bm{n}}^{(j)}(\bm{w}_j) = \begin{pmatrix}
    Q_{\bm{n}}^{(j)}(w_{j,1}) \\
    \vdots \\
    Q_{\bm{n}}^{(j)}(w_{j,M_j})
\end{pmatrix}, \qquad j = 1,\dots,r, \qquad \bm{n} \in \N^r.}
\begin{thm}\label{thm:type II thm}
    Assume $\bm{\mu}$ is perfect. Let $\bm{n} \in \N^r$ with $\bm{n} \geq \bm{M}$. Consider an admissible  sequence $\set{\bm{m}-\bm{s}_j}_{j = 0}^{N+\abs{\bm{M}}}$ from $\bm{m}$ towards $\bm{n}-\bm{M}$, for some $\bm{m} \in \N^r$ with  $\abs{\bm{m}} = \abs{\bm{n}}+N$  and  $\bm{m} \geq \bm{n}-\bm{M}$ (see Section \ref{linear indepence of regular sequences}). Then the polynomial
    \nm{eq:det formula rational type II}{\widetilde{P}_{\bm{n}}(x) = \Phi(x)^{-1}\det{\begin{pmatrix}
P_{\bm{m}}(x) & P_{\bm{m}-\bm{s}_{1}}(x) & \cdots & P_{\bm{m}-\bm{s}_{N+\abs{\bm{M}}}}(x) \\
P_{\bm{m}}(\bm{z}) & P_{\bm{m}-\bm{s}_{1}}(\bm{z}) & \cdots & P_{\bm{m}-\bm{s}_{N+\abs{\bm{M}}}}(\bm{z}) \\
Q_{\bm{m}}^{(1)}(\bm{w}_1) & Q_{\bm{m}-\bm{s}_{1}}^{(1)}(\bm{w}_1) & \cdots & Q_{\bm{m}-\bm{s}_{N+\abs{\bm{M}}}}^{(1)}(\bm{w}_1) \\
\vdots & \vdots & \ddots & \vdots \\
Q_{\bm{m}}^{(r)}(\bm{w}_r) & Q_{\bm{m}-\bm{s}_{1}}^{(r)}(\bm{w}_r) & \cdots & Q_{\bm{m}-\bm{s}_{N+\abs{\bm{M}}}}^{(r)}(\bm{w}_r) \\
\end{pmatrix}}}
     satisfies the type II orthogonality conditions \eqref{eq:orthogonality conditions type II} and the type II degree condition \eqref{eq:degree conditions type II} for the index $\bm{n}$. The index $\bm{n}$ is normal for $\widetilde{\bm{\mu}}$ if and only if $D^{(II)}_{\bm{n}} \neq 0$, where
     \nm{eq:dn rational type II}{D^{(II)}_{\bm{n}} = \det{\begin{pmatrix}
P_{\bm{m}-\bm{s}_{1}}(\bm{z}) & \cdots & P_{\bm{m}-\bm{s}_{N+\abs{\bm{M}}}}(\bm{z}) \\
Q_{\bm{m}-\bm{s}_{1}}^{(1)}(\bm{w}_1) & \cdots & Q_{\bm{m}-\bm{s}_{N+\abs{\bm{M}}}}^{(1)}(\bm{w}_1) \\
\vdots & \ddots & \vdots \\
Q_{\bm{m}-\bm{s}_{1}}^{(r)}(\bm{w}_r) & \cdots & Q_{\bm{m}-\bm{s}_{N+\abs{\bm{M}}}}^{(r)}(\bm{w}_r) \\
\end{pmatrix}},}
and then $\tfrac{1}{D^{(II)}_{\bm{n}}} P_{\bm{n}}$ is the (unique) normalized type II polynomial with respect to the system $\widetilde{\bm{\mu}}$.

In the case where $n_j < M_j$ for some $j$, let $\set{\bm{m}-\bm{s}_j}_{j = 0}^{N + \abs{\bm{M}^*}}$ be an admissible sequence from $\bm{m}$ towards $\bm{n}-\bm{M}^*$, where $M^* = (M_1^*,\dots,M_r^*)$ is defined by $M_j^* = n_j$ if $n_j < M_j$, and $M_j^* = M_j$ if $n_j \geq M_j$, for $j = 1,\dots,r$. The same results hold if we put $P_{\bm{m}-\bm{s}_j} = 0$ for $j = N + \abs{\bm{M}^*}+1,\dots,N+\abs{\bm{M}}$, and
\nm{eq:q type II small indices}{&\begin{pmatrix}
    Q_{\bm{m}-\bm{s}_{N + \abs{\bm{M}^*}+1}}^{(1)}(\bm{w}_1) & \cdots & Q_{\bm{m}-\bm{s}_{N+\abs{\bm{M}}}}^{(1)}(\bm{w}_1) \\
    \vdots & \ddots & \vdots \\
    Q_{\bm{m}-\bm{s}_{N + \abs{\bm{M}^*}+1}}^{(r)}(\bm{w}_r) & \cdots & Q_{\bm{m}-\bm{s}_{N+\abs{\bm{M}}}}^{(r)}(\bm{w}_r)
\end{pmatrix} 
= \operatorname{diag}(W_1,\ldots,W_r),
\\ \\ & \qquad \textnormal{where} \qquad W_j = \begin{pmatrix}
    1 & w_{j,1} & \cdots & w_{j,1}^{M_j-n_j-1} \\
    \vdots & \vdots & \ddots & \vdots \\
    1 & w_{j,M_j} & \cdots & w_{j,M_j}^{M_j-n_j-1}
\end{pmatrix}, \qquad j = 1,\dots,r.}
{Here} $\operatorname{diag}(W_1,\dots,W_r)$ stands for the matrix with blocks $W_1,\dots,W_r$, with all other entries equal to zero. If $n_j \geq M_j$ then $W_j$ is empty and no block corresponding to $j$ is added to $\operatorname{diag}(W_1,\dots,W_r)$.
\end{thm}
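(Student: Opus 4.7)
The plan is to parallel the proof of Theorem~\ref{thm:type I thm}, exploiting the duality between type I and type II; I focus on the main case $\bm{n} \ge \bm{M}$ and address the small-index case at the end. First, $\Phi(x)^{-1}\det(\cdots)$ in \eqref{eq:det formula rational type II} is a polynomial because at $x = z_k$ the first row of the matrix agrees with the $(k+1)$-th row (the row corresponding to the $k$-th entry of the block $P_{\bm{m}}(\bm{z})$), forcing the determinant to vanish; since $\Phi$ has simple zeros, division is permitted. Cofactor expansion along the first row writes $\Phi(x)\widetilde{P}_{\bm{n}}(x)$ as a linear combination of the $P_{\bm{m}-\bm{s}_j}(x)$, each of degree at most $\abs{\bm{m}} = \abs{\bm{n}}+N$, yielding $\deg \widetilde{P}_{\bm{n}} \le \abs{\bm{n}}$.

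To verify orthogonality $\widetilde{\mu}_k[\widetilde{P}_{\bm{n}}(x)\,x^p] = 0$ for $p \in \set{0,\ldots,n_k-1}$, I would use the partial-fraction decomposition
\[ x^p = \Psi_k(x)\,q(x) + \sum_{l=1}^{M_k} \alpha_{p,l}\,\frac{\Psi_k(x)}{x - w_{k,l}}, \]
where $\deg q = p - M_k$ (and $q = 0$ if $p < M_k$). The polynomial part, via $\Psi_k\widetilde{\mu}_k = \Phi\mu_k$, reduces to $\mu_k[\Phi(x)\widetilde{P}_{\bm{n}}(x)\,q(x)]$; cofactor expansion together with orthogonality of each $P_{\bm{m}-\bm{s}_j}$ against $\mu_k$ makes this vanish, since $\deg q \le n_k - M_k - 1 < (m-s_j)_k$ follows from $\bm{m}-\bm{s}_j \ge \bm{n}-\bm{M}$. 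The singular part, via $\Phi\widecheck{\mu}_k = \widetilde{\mu}_k$ and cofactor expansion, becomes a linear combination of the $Q_{\bm{m}-\bm{s}_j}^{(k)}(w_{k,l})$, which equals the determinant in \eqref{eq:det formula rational type II} with the first row replaced by an existing $Q^{(k)}$-row and therefore vanishes.

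For the non-vanishing of $D_{\bm{n}}$ under normality, suppose $D_{\bm{n}} = 0$. Then there exist constants $c_1,\ldots,c_{N+\abs{\bm{M}}}$, not all zero, witnessing column dependence in \eqref{eq:dn rational type II}. Setting $P(x) = \Phi(x)^{-1}\sum_{i=1}^{N+\abs{\bm{M}}} c_i P_{\bm{m}-\bm{s}_i}(x)$, the $z_k$-conditions make $P$ a polynomial, Lemma~\ref{lem:linear independence modified paths type II} ensures $P \not\equiv 0$, and $\abs{\bm{s}_i} \ge 1$ for $i \ge 1$ (which follows from admissibility with $\bm{s}_0 = \bm{0}$) forces $\deg P \le \abs{\bm{n}} - 1$. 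The same orthogonality calculation as in the previous paragraph (now with the $Q$-conditions killing the singular part) shows $P$ is a type II polynomial for $\widetilde{\bm{\mu}}$ at $\bm{n}$ of degree strictly below $\abs{\bm{n}}$, contradicting Lemma~\ref{lem:normality lemma}(iv).

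The small-index case, where some $n_j < M_j$, requires handling the zero $P$-columns and the Vandermonde blocks $W_j$. I would adapt the argument from the end of the proof of Theorem~\ref{thm:type I thm}: use the identity \eqref{eq:identity} together with the Vandermonde blocks to perform row operations on the first row, leveraging the analog of \eqref{eq:magic type I} to annihilate the contributions from missing moments in each affected coordinate. I expect this to be the main technical obstacle --- not because any new idea is required, but because one must track several coordinates $j$ with varying deficiencies $M_j - n_j$ simultaneously, combining their per-coordinate partial-fraction expansions with the block-diagonal Vandermonde structure to produce the desired cancellations.
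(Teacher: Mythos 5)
Your proposal is correct and follows essentially the same route as the paper: establish divisibility by $\Phi$ and the degree bound, split $x^p$ into a multiple of $\Psi_k$ plus a partial-fraction combination of $\Psi_k(x)/(x-w_{k,l})$ (the paper phrases this as a span argument rather than a single division-with-remainder, but it is the same computation), and derive $D_{\bm{n}}\neq 0$ from normality via Lemma \ref{lem:linear independence modified paths type II}. The small-index case is only sketched, but you identify exactly the ingredients (the identity \eqref{eq:identity}, the Vandermonde blocks, and the analogue \eqref{eq:magic type II} of \eqref{eq:magic type I}) that the paper uses there.
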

\begin{rem}
    The criterion stating that $\bm{n}$ is normal if and only if $D^{(II)}_{\bm{n}} \neq 0$ is a minor improvement over our result~\cite[Thm 3.2(ii)]{ctpaper} for Christoffel transforms. In that work we required $D^{(II)}_{\bm{n}} \neq 0$ along a path of multi-indices leading to $\bm{n}$, which is more restrictive than necessary.
\end{rem}
    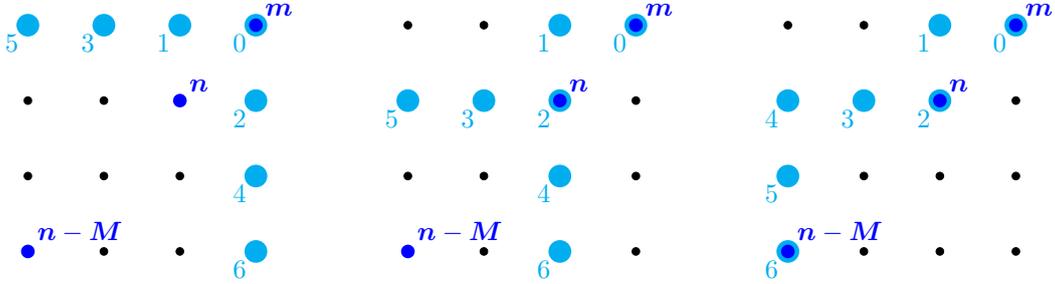
\begin{figure}[ht]
\begin{tikzpicture}

\foreach \x in {0,1,2,3,5,6,7,8,10,11,12,13}{
\foreach \y in {0,1,2,3}{
\node[draw,circle,inner sep=1pt,fill] at (\x,\y) {};}}
\fill[cyan] (3,3) circle[radius=1.5mm];
\fill[cyan] (2,3) circle[radius=1.5mm];
\fill[cyan] (3,2) circle[radius=1.5mm];
\fill[cyan] (1,3) circle[radius=1.5mm];
\fill[cyan] (3,1) circle[radius=1.5mm];
\fill[cyan] (0,3) circle[radius=1.5mm];
\fill[cyan] (3,0) circle[radius=1.5mm];
\fill[blue] (2,2) circle[radius=0.9mm];
\fill[blue] (0,0) circle[radius=0.9mm];
\fill[blue] (3,3) circle[radius=0.9mm];

\draw[cyan] (3,3) node[anchor=north east] {$0$};
\draw[cyan] (2,3) node[anchor=north east] {$1$};
\draw[cyan] (3,2) node[anchor=north east] {$2$};
\draw[cyan] (1,3) node[anchor=north east] {$3$};
\draw[cyan] (3,1) node[anchor=north east] {$4$};
\draw[cyan] (0,3) node[anchor=north east] {$5$};
\draw[cyan] (3,0) node[anchor=north east] {$6$};
\draw[blue] (2,2) node[anchor=south west] {$\bm{n}$};
\draw[blue] (0,0) node[anchor=south west] {$\bm{n}-\bm{M}$};
\draw[blue] (3,3) node[anchor=south west] {$\bm{m}$};

\fill[cyan] (8,3) circle[radius=1.5mm];
\fill[cyan] (7,3) circle[radius=1.5mm];
\fill[cyan] (7,2) circle[radius=1.5mm];
\fill[cyan] (6,2) circle[radius=1.5mm];
\fill[cyan] (7,1) circle[radius=1.5mm];
\fill[cyan] (5,2) circle[radius=1.5mm];
\fill[cyan] (7,0) circle[radius=1.5mm];
\fill[blue] (7,2) circle[radius=0.9mm];
\fill[blue] (5,0) circle[radius=0.9mm];
\fill[blue] (8,3) circle[radius=0.9mm];

\draw[cyan] (8,3) node[anchor=north east] {$0$};
\draw[cyan] (7,3) node[anchor=north east] {$1$};
\draw[cyan] (7,2) node[anchor=north east] {$2$};
\draw[cyan] (6,2) node[anchor=north east] {$3$};
\draw[cyan] (7,1) node[anchor=north east] {$4$};
\draw[cyan] (5,2) node[anchor=north east] {$5$};
\draw[cyan] (7,0) node[anchor=north east] {$6$};
\draw[blue] (7,2) node[anchor=south west] {$\bm{n}$};
\draw[blue] (5,0) node[anchor=south west] {$\bm{n}-\bm{M}$};
\draw[blue] (8,3) node[anchor=south west] {$\bm{m}$};

\fill[cyan] (13,3) circle[radius=1.5mm];
\fill[cyan] (12,3) circle[radius=1.5mm];
\fill[cyan] (12,2) circle[radius=1.5mm];
\fill[cyan] (11,2) circle[radius=1.5mm];
\fill[cyan] (10,2) circle[radius=1.5mm];
\fill[cyan] (10,1) circle[radius=1.5mm];
\fill[cyan] (10,0) circle[radius=1.5mm];
\fill[blue] (12,2) circle[radius=0.9mm];
\fill[blue] (10,0) circle[radius=0.9mm];
\fill[blue] (13,3) circle[radius=0.9mm];

\draw[cyan] (13,3) node[anchor=north east] {$0$};
\draw[cyan] (12,3) node[anchor=north east] {$1$};
\draw[cyan] (12,2) node[anchor=north east] {$2$};
\draw[cyan] (11,2) node[anchor=north east] {$3$};
\draw[cyan] (10,2) node[anchor=north east] {$4$};
\draw[cyan] (10,1) node[anchor=north east] {$5$};
\draw[cyan] (10,0) node[anchor=north east] {$6$};
\draw[blue] (12,2) node[anchor=south west] {$\bm{n}$};
\draw[blue] (10,0) node[anchor=south west] {$\bm{n}-\bm{M}$};
\draw[blue] (13,3) node[anchor=south west] {$\bm{m}$};

\end{tikzpicture} 
\caption{The following picture displays the reversed sequences in Figure \ref{type I fig}. These sequences are natural for Theorem \ref{thm:type II thm} in the case $r = 2$ with $\deg{\Phi} = \deg{\Psi_1} = \deg{\Psi_1} = 2$, and the choice $\bm{m} = (n_1+1,n_2+1)$. 
}
\end{figure}

\begin{proof}
    We first prove the result in the case $\bm{n} \geq \bm{M}$. {Note that the right-hand side of~\eqref{eq:det formula rational type II} is a polynomial. Denote it by $\widetilde{P}_{\bm{n}}$, and let us show that this polynomial is indeed the type II multiple orthogonal polynomial for $\widetilde{\bm{\mu}}$.} 
    Since each index in the sequence $\set{\bm{m}-\bm{s}_j}_{j = 0}^{N + \abs{\bm{M}}}$ is below $\bm{m}$, we have $\deg{\widetilde{P}_{\bm{n}}} \leq \abs{\bm{n}}$. Since the sequence is above $\bm{n}-\bm{M}$, we have, by $\Psi_j\widetilde{\mu}_j = \Phi \mu_j$ and \eqref{eq:det formula rational type II}, 
    \m{\widetilde{\mu}_j[\widetilde{P}_{\bm{n}}(x)\Psi_j(x)x^p] = \mu_j[\Phi(x)\widetilde{P}_{\bm{n}}(x)x^p] = 0, \qquad p = 0,\dots,n_j-M_j-1.}
    If $p < M_j$, consider the partial fraction decomposition
    \nm{eq:type II partial fraction}{x^p = \sum_{k = 1}^{M_j} \alpha_{p,j,k} \frac{\Psi_j(x)}{x-w_{j,k}}.}
    From this we get
    \m{\widetilde{\mu}_j[\widetilde{P}_{\bm{n}}(x)x^p] = \sum_{k=1}^{M_j}\alpha_{p,j,k}\widetilde{\mu}_j\left[\widetilde{P}_{\bm{n}}(x)\frac{\Psi_j(x)}{x-w_{j,k}}\right] = \sum_{k=1}^{M_j}\alpha_{p,j,k}\widecheck{\mu}_j\left[\Phi(x)\widetilde{P}_{\bm{n}}(x)\frac{\Psi_j(x)}{x-w_{j,k}}\right] = 0,}
    since we get the determinant in \eqref{eq:det formula rational type II} but with the first row replaced by a linear combination of the last $\abs{\bm{M}}$ rows. Since every polynomial of degree $\leq n_j - 1$ is in the span of $\set{x^p}_{p=0}^{M-1}\cup\set{\Psi_j(x)x^p}_{p=0}^{n_j-M-1}$, we end up with
    \m{\widetilde{\mu}_j[\widetilde{P}_{\bm{n}}(x)x^p] = 0, \qquad p = 0,\dots,n_j-1, \qquad j = 1,\dots,r.}
    
    The sequence starts at $\bm{m}$ and decreases, so the degree $\abs{\bm{n}}$ coefficient of $\widetilde{P}_{\bm{n}}$ is equal to $D^{(II)}_{\bm{n}}$. We have $D^{(II)}_{\bm{n}} = 0$ if and only if there are constants $c_1,\dots,c_{N+\abs{\bm{M}}}$, not all zero, such that 
    \begin{align}
        \label{eq:zerosDivisibility2}
        & \sum_{i = 1}^{N+\abs{\bm{M}}}c_i P_{\bm{m}-\bm{s}_{i}}(x) = 0, \qquad x = z_1,\dots,z_N, \\\label{eq:QzerosDivisibility2}
& \sum_{i = 1}^{N+\abs{\bm{M}}} c_i Q_{\bm{m}-\bm{s}_{i}}^{(j)}(x) = 0, \qquad x = w_{j,1},\dots,w_{j,M_j}, \qquad j = 1,\dots,r.
\end{align}
{Now suppose $D^{(II)}_{\bm{n}} = 0$ and choose $c_1,\dots,c_{N+\abs{\bm{M}}}$ as above. By~\eqref{eq:zerosDivisibility2}, we know that} 
\m{P(x) = \Phi(x)^{-1}\sum_{i = 1}^{N+\abs{\bm{M}}} c_i P_{\bm{m}-\bm{s}_{i}}(x).}
{is a polynomial.}
Since our sequence of indices is admissible, Lemma~\ref{lem:linear independence modified paths type II} implies $P \not\equiv 0$. Since every element is strictly below $\bm{m}$ we have $\deg{P} < \abs{\bm{n}}$, and since every element is above $\bm{n}-\bm{M}$ we have, by similar arguments as earlier,
\begin{equation}\label{eq:orthogonality relations P}
    {\widetilde{\mu}_j[P(x)x^p] = 0, \qquad p = 0,\dots,n_j-1, \qquad j = 1,\dots,r.}
\end{equation}
Hence $P$ is a type II polynomial for the index $\bm{n}$ with respect to $\widetilde{\bm{\mu}}$, so $\bm{n}$ cannot be normal for $\widetilde{\bm{\mu}}$. 

Conversely, suppose $\bm{n}$ is not normal for $\widetilde{\bm{\mu}}$. Then there some polynomial $P \not\equiv 0$ with $\deg{P} < \abs{\bm{n}}$ such that \eqref{eq:orthogonality relations P} holds. As we have seen, this implies
\begin{equation*}
    \mu_j[\Phi(x)P(x)x^p] = 0, \qquad p = 0,\dots,n_j-M_j-1, \qquad j = 1,\dots,r.
\end{equation*}
{Note that $\deg (\Phi(x)P(x)) < N+|\bm{n}| = |\bm{m}|$.} For a polynomial $R(x) = \sum_{k = 0}^{\abs{\bm{m}}-1}\kappa_kx^k$, the orthogonality relations 
\begin{equation}\label{eq:lastEq}
    \mu_j[R(x)x^p] = 0, \qquad p = 0,\dots,n_j-M_j-1, \qquad j = 1,\dots,r,
\end{equation}
turn into a $(\abs{\bm{n}}-\abs{\bm{M}})\times\abs{\bm{m}}$ system of equations. The coefficient matrix of this system is given by $H_{\bm{m}}$ with $N+\abs{\bm{M}}$ rows removed. Hence the solution space of this system has dimension $N+\abs{\bm{M}}$. Note that {$\bm{m}-\bm{s}_i \ge \bm{n}-\bm{M}$, and therefore} $P_{\bm{m}-\bm{s}_i}$ is a solution {of~\eqref{eq:lastEq}} for $i = 1,\dots,N+\abs{\bm{M}}$. Since these polynomials are linearly independent by Lemma \ref{lem:linear independence modified paths type II}, we get $\Phi(x)P(x) = \sum_{i = 1}^{N+\abs{\bm{M}}} c_i P_{\bm{m}-\bm{s}_{i}}(x)$ for some constants $c_i$, which are clearly not all zero. 
{This implies \eqref{eq:zerosDivisibility2} since $\Phi(z_j)=0$. Equalities~\eqref{eq:QzerosDivisibility2} also hold as 
$$
\sum_{i = 1}^{N+\abs{\bm{M}}} c_i Q_{\bm{m}-\bm{s}_{i}}^{(j)}(w_{j,k}) = \widecheck{\mu}_j \Big[ \Phi(x) P(x) \frac{\Psi_j(x)}{x-w_{j,k}} \Big]
=
\widetilde{\mu}_j \Big[ P(x) \frac{\Psi_j(x)}{x-w_{j,k}} \Big]=0,
$$
by \eqref{eq:orthogonality relations P} and $M_j-1\le n_j-1$. 
Finally, \eqref{eq:zerosDivisibility2}-\eqref{eq:QzerosDivisibility2} implies $D^{(II)}_{\bm{n}} = 0$.}

Now consider the case $n_j < M_j$ for some $j$. We want to check orthogonality conditions with respect to $\widetilde{\mu}_j$~\eqref{eq:orthogonality relations P} for $p \leq n_j - 1$ (note that if $n_k \geq M_k$ for some $k \neq j$ then the orthogonality conditions for $\widetilde{\mu}_k$ work out the same as before). As in the proof of the type I formula, we substitute \eqref{eq:identity} into the partial fraction decomposition \eqref{eq:type II partial fraction} to get 
\nm{eq:magic type II}{\sum_{k = 1}^{M_j}\alpha_{p,j,k} w_{j,k}^q = 0, \qquad q = 0,\dots,M_j-p-2.}
Since $p < M_j$, we have
\m{\widetilde{\mu}_j[\widetilde{P}_{\bm{n}}(x)x^p] = \sum_{k=1}^{M_j}\alpha_{p,j,k}\widecheck{\mu}_j\left[\Phi(x)\widetilde{P}_{\bm{n}}(x)\frac{\Psi_j(x)}{x-w_{j,k}}\right],}
which is the determinant in \eqref{eq:det formula rational type II} but with each of the first $N + \abs{\bm{M}^*}+1$ elements in the first row replaced with a linear combination of the elements in the rows corresponding to $Q^{(j)}$'s. Use this to get the first $N + \abs{\bm{M}^*}+1$ elements in the first row of the determinant equal to $0$ through elementary row operations. Then a linear combination of the rows in the $j$-th block of \eqref{eq:q type II small indices} is also added to the first row. This linear combination is $0$ by \eqref{eq:magic type II}, since $M_j - p - 2 \geq M_j - n_j - 1$, so all the elements in the first row of the determinant are now $0$. Hence we get
\m{\widetilde{\mu}_j[\widetilde{P}_{\bm{n}}(x)x^p] = 0, \qquad p = 0,\dots,n_j-1.}
Similar arguments then show that $D^{(II)}_{\bm{n}} \neq 0$ if and only if $\bm{n}$ is normal for $\widetilde{\bm{\mu}}$. 
\end{proof}

\subsection{The Case of Zeros of Higher Multiplicity}\label{general case}\hfill\\

Some modifications of Theorem \ref{thm:type I thm} and Theorem \ref{thm:type II thm} can give us a formula for any (diagonal) rational perturbation of functionals, i.e., $\widetilde{\bm{\mu}} = (\widetilde{\mu}_1,\dots,\widetilde{\mu}_r)$ satisfying 
\nm{eq:rp functional moprl}{\Psi_j\widetilde{\mu}_j = \Phi_j\mu_j, \qquad j = 1,\dots,r,}
for any polynomials $\Phi_j(x) = \prod_{k = 1}^{N_j}(x-z_{j,k})$ and $\Psi_j(x) = \prod_{k = 1}^{M_j}(x-w_{j,k})$, $j = 1,\dots,r$. For the case of measures this includes all perturbations $\widetilde{\mu}_j$ of the type 
\nm{eq:rp measure moprl}{\int f(x)d\widetilde{\mu}_j(x) = \int f(x)\frac{\Phi_j(x)}{\Psi_j(x)}d\mu_j(x) + \sum_{k = 1}^{K_j}c_{j,k}f(u_{j,k}), \qquad j = 1,\dots,r,}
since with $\Lambda_j(x) = \prod_{k = 1}^{K_j}(x-u_{j,k})$ we have 
\nm{eq:rp w discrete parts}{\Lambda_j\Psi_j\widetilde{\mu}_j = \Lambda_j\Phi_j\mu_j, \qquad j = 1,\dots,r.}

First let $\Psi = \lcm(\Psi_1,\dots,\Psi_r)$ and $\Phi_j^* = \Phi_j\Psi/\Psi_j$, $j = 1,\dots,r$, and note that we have
\nm{eq:modified rp functional type I}{\Psi\widetilde{\mu}_j = \Phi_j^*\mu_j, \qquad j = 1,\dots,r.}
From here we can apply Theorem \ref{thm:type I thm}, as long as all $\Phi_1,\dots,\Phi_r$ and $\Psi_1,\dots,\Phi_r$ have simple roots, and $\Phi_i$ and $\Psi_j$ are coprime for $i \neq j$. Similarly, we can apply Theorem \ref{thm:type II thm} using 
\nm{eq:modified rp functional type II}{\Psi_j^*\widetilde{\mu}_j = \Phi\mu_j, \qquad j = 1,\dots,r,}
where $\Phi = \lcm(\Phi_1,\dots,\Phi_r)$ and $\Psi_j^* = \Psi_j\Phi/\Phi_j$, $j = 1,\dots,r$. 

We now describe how to modify Theorem \ref{thm:type I thm} to allow multiplicity of the roots of $\Phi_1,\dots,\Phi_r$ and $\Psi$. The same ideas can then be used to modify Theorem \ref{thm:type II thm} to the case of higher multiplicities. Using \eqref{eq:modified rp functional type I} and \eqref{eq:modified rp functional type II} we would then get determinantal formulas for type I and type II polynomials, for any choice of $\Phi_1,\dots,\Phi_r$ and $\Psi_1,\dots,\Psi_r$ in \eqref{eq:rp functional moprl}. 

Recall that the zeros of $\Phi_j$ are $\{z_{j,k}\}_{k=1}^{N_j}$, repeated according to their multiplicities. Denote $l_{j,k}$ 
to be the number of $i < k$ such that $z_{j,i} = z_{j,k}$, 
and modify \eqref{eq:type I blocks} by
\nm{eq:type I block modified}{A_{\bm{n}}^{(j)}(\bm{z}_j) = \begin{pmatrix}
    \frac{d^{l_{j,1}}}{d z^{l_{j,1}}}A_{\bm{n}}^{(j)}(z){\Big|}_{z = z_{j,1}} \\
    \vdots \\
    \frac{d^{l_{j,N_j}}}{d z^{l_{j,N_j}}}A_{\bm{n}}^{(j)}(z){\Big|}_{z = z_{j,N_j}}
\end{pmatrix}, \qquad j = 1,\dots,r, \qquad \bm{n} \in \N^r.}
This ensures that the determinant in \eqref{eq:det formula rational type I} remains divisible by $\Phi_j$. From there the rest of the proof works out without any changes (and all of this includes the special case $\abs{\bm{n}} \leq M$). 

Similarly, 
write $l_k$ for the number of $i < k$ such that $w_i = w_k$. Here we modify \eqref{eq:q type I} by
\nm{eq:q type I modified}{B_{\bm{n}}(w_k) = \sum_{j = 1}^{r}\widecheck{\mu}_j\left[A_{\bm{n}}^{(j)}(x)\frac{\d^{l_k}}{\d z^{l_k}}\frac{\Psi(x)}{x-z}\right]{\Bigg|}_{z = w_k} = \sum_{j = 1}^{r}\widecheck{\mu}_j\left[A_{\bm{n}}^{(j)}(x)\frac{l_k!\Psi(x)}{(x-w_k)^{l_k+1}}\right], \qquad k = 1,\dots,N, \\ \bm{n} \in \N^r.}
When $\bm{m}+\bm{s}_i \notin \N^r\setminus\set{\bm{0}}$ in Theorem \ref{thm:type I thm}, we put $B_{\bm{m}+\bm{s}_i}(w_k) = \frac{d^{l_k}}{dz^{l_k}}z^{M-\abs{\bm{n}}-i}|_{z = w_k}$. For the proof, the orthogonality with respect to $\Psi(x)x^p$ remains the same, but for the orthogonality with respect to $x^p$ for $p < M$, we need to replace the partial fraction decomposition \eqref{eq:type I partial fraction} with 
\nm{eq:type I modified partial fraction}{x^p = \sum_{k = 1}^{M}\alpha_{p,k}\frac{l_k!\Psi(x)}{(x-w_k)^{l_k+1}}.}
By differentiating \eqref{eq:identity} in $z$ we obtain
\nm{eq:modified identity}{\frac{l!x^q}{(x-z)^{l+1}} = x^{q-1}\frac{d^l}{dz^l}1 + x^{q-2}\frac{d^l}{dz^l}z + \dots + \frac{d^l}{dz^l}z^{q-1} + o(1) \qquad (x \rightarrow \infty).}
Substitute this into \eqref{eq:type I modified partial fraction} multiplied with $x^q$ to get
\nm{eq:magic modified type I}{\sum_{k = 1}^{M}\alpha_{p,k} \frac{d^{l_k}}{dz^{l_k}}z^q{\Big|}_{w_k} = \begin{cases}
    0, \qquad q = 0,\dots,M-p-2, \\
    1, \qquad q = M-p-1.
\end{cases}}
From here the rest of the proof of Theorem \ref{thm:type I thm} works out the same way as before.

\section{Special cases}\label{examples}

\subsection{Christoffel Transforms}\hfill\\

From now on we assume all systems we consider are perfect. We ignore the normalizing constants in most of our determinantal formulas, but since we assume perfectness, Theorem \ref{thm:type I thm} and Theorem \ref{thm:type II thm} ensure that our determinants are multiple orthogonal polynomials.

The most general Christoffel transforms of the system $\bm{\mu} = (\mu_1,\dots,\mu_r)$ that we consider in this paper will be of the form 
\nm{eq:ct example moprl}{\widehat{\bm{\mu}} = (\widehat{\mu}_1,\dots,\widehat{\mu}_r) = (\Phi_1\mu_1,\dots,\Phi_r\mu_r),}
for any choice of polynomials $\Phi_j(x) = \prod_{k = 1}^{N_j}(x-z_{j,k})$, $j = 1,\dots,r$. However, we will assume $\Phi_1,\dots,\Phi_r$ all have only simple zeros, to avoid having to add rows of derivatives as described in Section \ref{general case}. We write $\widehat{\bm{A}}_{\bm{n}} = (\widehat{A}_{\bm{n}}^{(1)},\dots,\widehat{A}_{\bm{n}}^{(r)})$ for the type I polynomials with respect to $\widehat{\bm{\mu}}$, $\bm{A}_{\bm{n}} = (A_{\bm{n}}^{(1)},\dots,A_{\bm{n}}^{(r)})$ for the type I polynomials with respect to $\bm{\mu}$, $\widehat{P}_{\bm{n}}$ for the type II polynomials with respect to $\widehat{\bm{\mu}}$, and $P_{\bm{n}}$ for the type II polynomials with respect to $\bm{\mu}$. We also write $\bm{N}$ for the index $(N_1,\dots,N_r)$. 


\subsubsection{\textnormal{\textbf{Systems $\widehat{\bm{\mu}} = (\Phi_1\mu_1,\Phi_2\mu_2,\dots,\Phi_r\mu_r)$, type I polynomials} }}\hfill

\textnormal{For type I polynomials, Theorem \ref{thm:type I thm} applies immediately with $\Psi_1 = \cdots = \Psi_r = 1$, reducing \eqref{eq:det formula rational type I} to 
\nm{eq:general ct type I}{\widehat{A}_{\bm{n}}^{(j)}(x) = \Phi_j(x)^{-1}\det{\begin{pmatrix}
A_{\bm{n}}^{(j)}(x) & A_{\bm{n}+\bm{s}_{1}}^{(j)}(x) & \cdots & A_{\bm{n}+\bm{s}_{\abs{\bm{N}}}}^{(j)}(x) \\
A_{\bm{n}}^{(1)}(\bm{z}_1) & A_{\bm{n}+\bm{s}_{1}}^{(1)}(\bm{z}_1) & \cdots & A_{\bm{n}+\bm{s}_{\abs{\bm{N}}}}^{(1)}(\bm{z}_1) \\
\vdots & \vdots & \ddots & \vdots \\
A_{\bm{n}}^{(r)}(\bm{z}_{r}) & A_{\bm{n}+\bm{s}_{1}}^{(r)}(\bm{z}_r) & \cdots & A_{\bm{n}+\bm{s}_{\abs{\bm{N}}}}^{(r)}(\bm{z}_r) \\
\end{pmatrix}}, \qquad j = 1\dots,r.}
Here we chose $\bm{m} = \bm{n}$, but note that Theorem \ref{thm:type I thm} allows for any $\bm{m} \leq \bm{n} + \bm{N}$ with $\abs{\bm{m}} = \abs{\bm{n}}$. For the sequence $\set{\bm{n}+\bm{s}_j}_{j = 0}^{\abs{\bm{N}}}$, the natural choice may be a path from $\bm{n}$ to $\bm{n}+\bm{N}$, or perhaps 
the increasing frame from $\bm{n}$ towards $\bm{n}+\bm{N}$ (see Definition~\ref{def:forward sequences}).}

\subsubsection{\textnormal{\textbf{Systems $\widehat{\bm{\mu}} = (\Phi\mu_1,\Phi\mu_2,\dots,\Phi\mu_r)$, type II polynomials}}}\label{ex:typeIIFullChr} \hfill

\textnormal{For type II polynomials, Theorem \ref{thm:type II thm} does not apply immediately in the general case, since it requires $\Phi_1 = \cdots = \Phi_r$. We can still use the trick \eqref{eq:modified rp functional type II} to get a determinantal formula, and since $\Psi_1 = \cdots = \Psi_r = 1$ we do not have to deal with any zeros of higher multiplicity.}

\textnormal{If $\Phi_1,\dots,\Phi_r$ are coprime we use $\Phi(x) = \Phi_1(x)\cdots \Phi_r(x)$ (of degree $|\bm{N}|$) and $\Psi_j(x) = \Phi(x)/\Phi_j(x)$ (of degree $|\bm{N}|-N_j$) in Theorem \ref{thm:type II thm}. The determinant~\eqref{eq:det formula rational type II} is then of size $(r\abs{\bm{N}}+1)\times(r\abs{\bm{N}}+1)$. Note that this is much larger than the $(\abs{\bm{N}}+1)\times(\abs{\bm{N}}+1)$ determinant in \eqref{eq:general ct type I}. 
}

\textnormal{The more zeros $\Phi_1,\dots,\Phi_r$ have in common the smaller the determinant in the type II formula will be. In particular the simplest case here is $\Phi_1(x) = \cdots = \Phi_r(x) = \Phi(x) = \prod_{k = 1}^{N}(x-z_k)$. This was the setting we considered in \cite{ctpaper}. Here Theorem \ref{thm:type II thm} even applies directly, reducing \eqref{eq:det formula rational type II} to
\nm{eq:full ct type II}{\widehat{P}_{\bm{n}}(x) = \Phi(x)^{-1}\det{\begin{pmatrix}
P_{\bm{n}+N\bm{e}_1}(x) & P_{\bm{n}+(N-1)\bm{e}_1}(x) & \cdots & P_{\bm{n}}(x) \\
P_{\bm{n}+N\bm{e}_1}(z_1) & P_{\bm{n}+(N-1)\bm{e}_1}(z_1) & \cdots & P_{\bm{n}}(z_1) \\
\vdots & \vdots & \ddots & \vdots \\
P_{\bm{n}+N\bm{e}_1}(z_N) & P_{\bm{n}+(N-1)\bm{e}_1}(z_N) & \cdots & P_{\bm{n}}(z_N)
\end{pmatrix}}.}
Note the similarity with \eqref{eq:ct thm one measure}. We chose the sequence to be the straight path from $\bm{m} = \bm{n} + N\bm{e}_1$ to $\bm{n}$. Of course a straight path from $\bm{m} = \bm{n} + N\bm{e}_j$ to $\bm{n}$ works for any $j$, or more generally, any path from $\bm{m} \geq \bm{n}$ to $\bm{n}$ with $\abs{\bm{m}} = \abs{\bm{n}} + N$. 
}

\subsubsection{\textnormal{\textbf{Systems $\widehat{\bm{\mu}} = (\Phi\mu_1,\mu_2,\dots,\mu_r)$, type I polynomials}}}\label{ex:partial ct type I} \hfill

\textnormal{The determinant in \eqref{eq:full ct type II} is of size $(N+1)\times(N+1)$, which is even smaller than the determinant in \eqref{eq:general ct type I}. Also note that even for that special case, the determinant in \eqref{eq:general ct type I} remains at the same size $(\abs{\bm{N}}+1)\times(\abs{\bm{N}}+1) = (rN+1)\times(rN+1)$. We now consider Christoffel transforms of the form $(\Phi\mu_1,\mu_2,\dots,\mu_r)$. This system satisfies dual properties to the system $(\Phi\mu_1,\Phi_2\mu_2,\dots,\Phi_r\mu_r)$.  In particular this system reduces the determinant in \eqref{eq:general ct type I} to size $(N+1)\times(N+1)$. Indeed, \eqref{eq:general ct type I} reduces to 
\nm{eq:partial ct type I}{& \widehat{A}_{\bm{n}}^{(1)}(x) = \Phi(x)^{-1}\det{\begin{pmatrix}
A_{\bm{n}}^{(1)}(x) & A_{\bm{n}+\bm{e}_{1}}^{(1)}(x) & \cdots & A_{\bm{n}+N\bm{e}_{1}}^{(1)}(x) \\
A_{\bm{n}}^{(1)}(z_1) & A_{\bm{n}+\bm{e}_{1}}^{(1)}(z_1) & \cdots & A_{\bm{n}+N\bm{e}_{1}}^{(1)}(z_1) \\
\vdots & \vdots & \ddots & \vdots \\
A_{\bm{n}}^{(1)}(z_N) & A_{\bm{n}+\bm{e}_{1}}^{(1)}(z_N) & \cdots & A_{\bm{n}+N\bm{e}_{1}}^{(1)}(z_N) \\
\end{pmatrix}}, \\
& \widehat{A}_{\bm{n}}^{(j)}(x) = \det{\begin{pmatrix}
A_{\bm{n}}^{(j)}(x) & A_{\bm{n}+\bm{e}_{1}}^{(j)}(x) & \cdots & A_{\bm{n}+N\bm{e}_{1}}^{(j)}(x) \\
A_{\bm{n}}^{(1)}(z_1) & A_{\bm{n}+\bm{e}_{1}}^{(1)}(z_1) & \cdots & A_{\bm{n}+N\bm{e}_{1}}^{(1)}(z_1) \\
\vdots & \vdots & \ddots & \vdots \\
A_{\bm{n}}^{(1)}(z_N) & A_{\bm{n}+\bm{e}_{1}}^{(1)}(z_N) & \cdots & A_{\bm{n}+N\bm{e}_{1}}^{(1)}(z_N) \\
\end{pmatrix}}, \qquad j = 2,\dots,r.}
Note that with our choice $\bm{m} = \bm{n}$, the sequence of multi-indices $\{\bm{n}+j\bm{e}_1\}_{j=0}^N$ we used here is the only possible choice. }

\subsubsection{\textnormal{\textbf{Systems $\widehat{\bm{\mu}} = (\Phi\mu_1,\mu_2,\dots,\mu_r)$, type II polynomials}}}\label{ex:partial ct type II} \hfill

\textnormal{For the systems considered in the previous example, we can also get a type II formula, but with a determinant of larger size than in the type I formula. Similarly to Section~\ref{ex:typeIIFullChr},  Theorem \ref{thm:type I thm} does not apply directly to this system for type II polynomials, so we again use the trick \eqref{eq:modified rp functional type II}. 
 We note that 
\nm{eq:type II parial christoffel property}{\widehat{\mu}_1 = \Phi {\mu}_1, \qquad \Phi\widehat{\mu}_j = \Phi\mu_j, \qquad j = 2,\dots,r.}
Hence we apply Theorem \ref{thm:type II thm} with $\Phi_1 = \cdots = \Phi_r = \Phi$, $\Psi_1 = 1$, $\Psi_2 = \cdots = \Psi_r = \Phi$. We obtain the determinantal formula
\nm{eq:partial ct type II}{\widehat{P}_{\bm{n}-N\bm{e}_1}(x) & = \Phi(x)^{-1}\det{\begin{pmatrix}
        P_{\bm{n}}(x) & P_{\bm{n}-\bm{s}_1}(x) & \dots & P_{\bm{n}-\bm{s}_{rN}}(x) \\
        P_{\bm{n}}(\bm{z}) & P_{\bm{n}-\bm{s}_1}(\bm{z}) & \dots & P_{\bm{n}-\bm{s}_{rN}}(\bm{z}) \\
        Q_{\bm{n}}^{(2)}(\bm{z}) & Q_{\bm{n}-\bm{s}_1}^{(2)}(\bm{z}) & \dots & Q_{\bm{n}-\bm{s}_{rN}}^{(2)}(\bm{z}) \\
        \vdots & \vdots & \ddots & \vdots \\
        Q_{\bm{n}}^{(r)}(\bm{z}) & Q_{\bm{n}-\bm{s}_1}^{(r)}(\bm{z}) & \dots & Q_{\bm{n}-\bm{s}_{rN}}^{(r)}(\bm{z}) \\
    \end{pmatrix}}, \qquad n_1 \geq N.}
To simplify the formula we chose the index $\bm{n}-N\bm{e}_1$ on the left-hand side instead of $\bm{n}$. This allows the sequence of indices in the determinant to start at $\bm{n}$, and $\set{\bm{n}-\bm{s}_j}_{j = 0}^{rN}$ is then any admissible sequence from $\bm{n} = (n_1,\dots,n_r)$ towards $\bm{n} - N\bm{1} = (n_1-N,\dots,n_r-N)$. 
Here we assume that the multi-index $\bm{n} - N\bm{1}$ stays in $\N^r$. For the special case when the sequence $\{\bm{n}-\bm{s}_j\}_{j=0}^{rN}$ leaves $\N^r$, see Theorem \ref{thm:type II thm}).
}

\textnormal{
The determinant is of the larger size $(rN+1)\times(rN+1)$, so we are using the compact notation \eqref{eq:type II blocks}. Here the $Q$'s are given by
\nm{eq:q partial ct type II}{& Q_{\bm{n}}^{(j)}(z) = \widecheck{\mu}_j\left[P_{\bm{n}}(x)\frac{\Phi(x)}{x-z}\right], \qquad z = z_1,\dots,z_N, \\
& Q_{\bm{n}}^{(j)}(\bm{z}) = \begin{pmatrix}
    Q_{\bm{n}}^{(j)}(z_1) \\
    \vdots \\
    Q_{\bm{n}}^{(j)}(z_N)
\end{pmatrix} \qquad j = 2,\dots,r, \qquad \bm{n} \in \N^r,}
where $\widecheck{\mu}_j$ satisfies $\Phi\widecheck{\mu}_j = \mu_j$, $j = 2,\dots,r$.
In the case of measures, with all zeros outside the support of each $\mu_2,\dots,\mu_r$, the choice $d\widecheck{\mu}_j(x) = \Phi(x)^{-1}d\mu_j(x)$ turns \eqref{eq:q partial ct type II} into the Cauchy transforms
\nm{eq:q partial ct type II measures}{Q_{\bm{n}}^{(j)}(z) = \int\frac{P_{\bm{n}}(x)}{x-z}d\mu_j(x), \qquad j = 2,\dots,r, \qquad \bm{n} \in \N^r.}
} 

\textnormal{The determinantal formula \eqref{eq:partial ct type II} was one of the main motivations for the more general Theorem \ref{thm:type I thm} and Theorem \ref{thm:type II thm}, as $Q$'s enter into the formula, similarly to Uvarov's formula \eqref{eq:rp thm one measure}. 
}

\subsection{Geronimus Transforms}\label{ss:generalGeronimus}\hfill\\

\textnormal{The most general Geronimus transforms we consider in this paper are systems $\widecheck{\bm{\mu}} = (\widecheck{\mu}_1,\dots,\widecheck{\mu}_r)$ satisfying 
\nm{eq:gt property moprl}{(\Psi_1\widecheck{\mu}_1,\dots,\Psi_r\widecheck{\mu}_r) = (\mu_1,\dots,\mu_r)} 
for any choice of polynomials $\Psi_j(x) = \prod_{k = 1}^{M_j}(x-w_{j,k})$, $j = 1,\dots,r$. We assume that $\Psi_1,\dots,\Psi_r$ all have simple zeros. When $\mu_j$ is a measure and $\Psi_j$ has no zeros in $\supp(\mu_j)$, $\widecheck{\mu}_j$ is then a measure given by
\nm{eq:gt moprl measures}{\int f(x)d\widecheck{\mu}_j(x) = \int \frac{f(x)}{\Psi_j(x)}d\mu_j(x) + \sum_{k = 1}^{M_j}c_{j,k}f(w_{j,k}).}
We write $\widecheck{\bm{A}}_{\bm{n}} = (\widecheck{A}_{\bm{n}}^{(1)},\dots,\widecheck{A}_{\bm{n}}^{(r)})$ for the type I polynomials with respect to $\widecheck{\bm{\mu}}$, $\bm{A}_{\bm{n}} = (A_{\bm{n}}^{(1)},\dots,A_{\bm{n}}^{(r)})$ for the type I polynomials with respect to $\bm{\mu}$, $\widecheck{P}_{\bm{n}}$ for the type II polynomials with respect to $\widecheck{\bm{\mu}}$, and $P_{\bm{n}}$ for the type II polynomials with respect to $\bm{\mu}$. We also write $\bm{M}$ for the index $(M_1,\dots,M_r)$. }

\subsubsection{\textnormal{\textbf{Systems $\widecheck{\bm{\mu}}=(\Psi_1^{-1}\mu_1,\Psi_2^{-1}\mu_2,\dots,\Psi_r^{-1}\mu_r)$, type II polynomials}}}\label{ex:GeneralGeronimusTypeII}  \hfill

\textnormal{The determinantal formulas presented in this section further illustrate the duality between the type I and type II setting, as well as the duality between multiplying all measures by a factor and multiplying only one of the measures by the same factor. We first see this by noting that the type II formula is better adapted to the general Geronimus transforms than the type I formula. Here we get
\nm{eq:general gt type II}{\widecheck{P}_{\bm{n}}(x) = \det{\begin{pmatrix}
P_{\bm{n}}(x) & P_{\bm{n}-\bm{s}_{1}}(x) & \cdots & P_{\bm{n}-\bm{s}_{\abs{\bm{M}}}}(x) \\
Q_{\bm{n}}^{(1)}(\bm{w}_1) & Q_{\bm{n}-\bm{s}_{1}}^{(1)}(\bm{w}_1) & \cdots & Q_{\bm{n}-\bm{s}_{\abs{\bm{M}}}}^{(1)}(\bm{w}_1) \\
\vdots & \vdots & \ddots & \vdots \\
Q_{\bm{n}}^{(r)}(\bm{w}_r) & Q_{\bm{n}-\bm{s}_{1}}^{(r)}(\bm{w}_r) & \cdots & Q_{\bm{n}-\bm{s}_{\abs{\bm{M}}}}^{(r)}(\bm{w}_r) \\
\end{pmatrix}},}
where $\set{\bm{n}+\bm{s}_j}_{j = 0}^{\abs{\bm{M}}}$ is an admissible sequence from $\bm{n}$ towards $\bm{n}-\bm{M}$, including paths from $\bm{n}$ to $\bm{n}-\bm{M}$, and the frame from $\bm{n}$ towards $\bm{n}-\bm{M}$. 
$Q_{\bm{n}}^{(j)}(\bm{w}_j)$ in \eqref{eq:general gt type II} is given by 
\nm{eq:q gt type II}{& Q_{\bm{n}}^{(j)}(w_{j,k}) = \widecheck{\mu}_j\left[P_{\bm{n}}(x)\frac{\Psi_j(x)}{x-w_{j,k}}\right], \qquad k = 1,\dots,M_j, \\
& Q_{\bm{n}}^{(j)}(\bm{w}_j) = \begin{pmatrix}
    Q_{\bm{n}}^{(j)}(w_{j,1}) \\
    \vdots \\
    Q_{\bm{n}}^{(j)}(w_{j,M_j})
\end{pmatrix}, \qquad j = 1,\dots,r, \qquad \bm{n} \in \N^r.}
When $\mu_j$ is the measure \eqref{eq:gt moprl measures} and $\Psi_j$ has no zeros in $\supp(\mu_j)$, we have
\nm{eq:q gt type II measures}{Q_{\bm{n}}^{(j)}(w_{j,k}) = \int \frac{P_{\bm{n}}(x)}{x-w_{j,k}}d\mu_j(x) + c_{j,k}P_{\bm{n}}(w_{j,k})\Psi_j'(w_{j,k}), \qquad k = 1,\dots,M_j.}
In particular, $Q_{\bm{n}}$ is a Cauchy transform when we have no discrete part. 
}

\subsubsection{\textnormal{\textbf{Systems $\widecheck{\bm{\mu}}=(\Psi^{-1}\mu_1,\Psi^{-1}\mu_2,\dots,\Psi^{-1}\mu_r)$, type I polynomials}}}  \hfill

\textnormal{The case $\Phi_1(x) = \cdots = \Phi_r(x) = \Phi(x) = \prod_{k = 1}^N(x-z_k)$ simplified the determinantal formula for the Christoffel transform of type II polynomials to an $(N+1)\times(N+1)$ determinant. For the Geronimus transform, this now happens to type I polynomials when~$\Psi_1(x) = \cdots = \Psi_r(x) = \Psi(x) = \prod_{k = 1}^M (x-w_k)$, as \eqref{eq:det formula rational type I} reduces to
\nm{eq:full gt type I}{\widecheck{A}_{\bm{n}}^{(j)}(x) = \det{\begin{pmatrix}
A_{\bm{n}-M\bm{e}_1}^{(j)}(x) & A_{\bm{n}-(M-1)\bm{e}_1}^{(j)}(x) & \cdots & A_{\bm{n}}^{(j)}(x) \\
B_{\bm{n}-M\bm{e}_1}(w_1) & B_{\bm{n}-(M-1)\bm{e}_1}(w_1) & \cdots & B_{\bm{n}}(w_1) \\
\vdots & \vdots & \ddots & \vdots \\
B_{\bm{n}-M\bm{e}_1}(w_M) & B_{\bm{n}-(M-1)\bm{e}_1}(w_M) & \cdots & B_{\bm{n}}(w_M)
\end{pmatrix}}, \qquad j = 1\dots,r.}
Here $B_{\bm{n}}(w)$ is given by
\nm{eq:q full gt type I}{B_{\bm{n}}(w_k) = \sum_{j = 1}^r\widecheck{\mu}_j\left[A_{\bm{n}}^{(j)}(x)\frac{\Psi(x)}{x-w_k}\right], \qquad k = 1,\dots,M, \qquad \bm{n}\in\N^r.}
When $\widecheck{\mu}_j$ is given by~\eqref{eq:gt moprl measures} for each $j = 1,\dots,r$ we have
\nm{eq:q full gt type I measures}{B_{\bm{n}}(w_k) = \sum_{j = 1}^r\left(\int\frac{A_{\bm{n}}^{(j)}(x)}{x-w_k}d\mu_j(x) + c_{j,k}A_{\bm{n}}^{(j)}(w_k)\Psi_j'(w_k)\right), \qquad k = 1,\dots,M.}}

\subsubsection{\textnormal{\textbf{Systems $\widecheck{\bm{\mu}}=(\Psi^{-1}\mu_1,\mu_2,\dots,\mu_r)$, type II and type I polynomials}}}  \hfill

\textnormal{The determinant in \eqref{eq:full gt type I} is of size $(M+1)\times(M+1)$, but for type I polynomials the determinant is of size $(rM+1)\times(rM+1)$. To get an $(M+1)\times(M+1)$ determinant for type II polynomials we instead consider the case $\Psi_1 = \Psi$ and $\Psi_2 = \cdots = \Psi_r = 1$ in \eqref{eq:gt property moprl}. Then \eqref{eq:general gt type II} reduces to
\nm{eq:partial gt type II}{\widecheck{P}_{\bm{n}}(x) = \det{\begin{pmatrix}
P_{\bm{n}}(x) & P_{\bm{n}-\bm{e}_{1}}(x) & \cdots & P_{\bm{n}-M\bm{e}_{1}}(x) \\
Q_{\bm{n}}^{(1)}(w_1) & Q_{\bm{n}-\bm{e}_{1}}^{(1)}(w_1) & \cdots & Q_{\bm{n}-M\bm{e}_{1}}^{(1)}(w_1) \\
\vdots & \vdots & \ddots & \vdots \\
Q_{\bm{n}}^{(1)}(w_M) & Q_{\bm{n}-\bm{e}_{1}}^{(1)}(w_M) & \cdots & Q_{\bm{n}-M\bm{e}_{1}}^{(1)}(w_M) \\
\end{pmatrix}},}
where $Q_{\bm{n}}^{(1)}$ is given by \eqref{eq:q gt type II} and \eqref{eq:q gt type II measures} with $j = 1$ and $w_{1,k} = w_k$. Again, there is only possible sequence starting at $\bm{n}$.} 

\textnormal{On the other hand, for type I polynomials we get the more complicated formula
\nm{eq:partial gt type I}{& \widecheck{A}_{\bm{n}+M\bm{e}_1}^{(1)}(x) = \det{\begin{pmatrix}
A_{\bm{n}}^{(1)}(x) & A_{\bm{n}+\bm{s}_{1}}^{(1)}(x) & \cdots & A_{\bm{n}+\bm{s}_{rM}}^{(1)}(x) \\
A_{\bm{n}}^{(2)}(\bm{w}) & A_{\bm{n}+\bm{s}_{1}}^{(2)}(\bm{w}) & \cdots & A_{\bm{n}+\bm{s}_{rM}}^{(2)}(\bm{w}) \\
\vdots & \vdots & \ddots & \vdots \\
A_{\bm{n}}^{(r)}(\bm{w}) & A_{\bm{n}+\bm{s}_{1}}^{(r)}(\bm{w}) & \cdots & A_{\bm{n}+\bm{s}_{rM}}^{(r)}(\bm{w}) \\
B_{\bm{n}}(\bm{w}) & B_{\bm{n}+\bm{s}_{1}}(\bm{w}) & \cdots & B_{\bm{n}+\bm{s}_{rM}}(\bm{w})
\end{pmatrix}}, \\
& 
\widecheck{A}_{\bm{n}+M\bm{e}_1}^{(j)}(x) = \Phi(x)^{-1}\det{\begin{pmatrix}
A_{\bm{n}}^{(j)}(x) & A_{\bm{n}+\bm{s}_{1}}^{(j)}(x) & \cdots & A_{\bm{n}+\bm{s}_{rM}}^{(j)}(x) \\
A_{\bm{n}}^{(2)}(\bm{w}) & A_{\bm{n}+\bm{s}_{1}}^{(2)}(\bm{w}) & \cdots & A_{\bm{n}+\bm{s}_{rM}}^{(2)}(\bm{w}) \\
\vdots & \vdots & \ddots & \vdots \\
A_{\bm{n}}^{(r)}(\bm{w}) & A_{\bm{n}+\bm{s}_{1}}^{(r)}(\bm{w}) & \cdots & A_{\bm{n}+\bm{s}_{rM}}^{(r)}(\bm{w}) \\
B_{\bm{n}}(\bm{w}) & B_{\bm{n}+\bm{s}_{1}}(\bm{w}) & \cdots & B_{\bm{n}+\bm{s}_{rM}}(\bm{w})
\end{pmatrix}}, \qquad j = 2,\dots,r,}
where $\{\bm{n}+\bm{s}_j\}_{j=0}^{rM}$ is any admissible sequence from $\bm{n}$ towards $\bm{n} + M\bm{1} = (n_1+M,\dots,n_r+M)$.
Here we can use the same $B_{\bm{n}}$ as in \eqref{eq:q full gt type I} and \eqref{eq:q full gt type I measures}. }

\subsection{Various One-Step Transforms}\hfill\\

We consider measures/functionals perturbed by degree $1$ polynomials. More specifically, we say that a one-step Christoffel transform is a system of the form
\nm{eq:general one step ct}{(\widehat{\mu}_1,\dots,\widehat{\mu}_r) = (\Phi_1\mu_1,\dots\Phi_r\mu_r), \qquad \deg\Phi_j \leq 1, \qquad j = 1,\dots,r,}
and a one-step Geronimus transform is a system $(\widecheck{\mu}_1,\dots,\widecheck{\mu}_r)$ satisfying
\nm{eq:general one step gt}{(\Psi_1\widecheck{\mu}_1,\dots,\Psi_r\widecheck{\mu}_r) = (\mu_1,\dots,\mu_r), \qquad \deg\Psi_j \leq 1, \qquad j = 1,\dots,r.}
It is likely that the one-step transforms are going to be the most often used, so for the convenience of the reader we specialize our formulas for some of the simplest and most useful cases. 

\subsubsection{\textnormal{\textbf{The simplest one-step Christoffel transforms}}}\label{ex:simpleChr}  \hfill

\textnormal{First, we note that with $\Phi_1(x) = \cdots = \Phi_r(x) = x - z_1$ in \eqref{eq:general one step ct}, 
the monic type II polynomials are given by
\nm{eq:one step full ct type II}{\widehat{P}_{\bm{n}}(x) & = (x-z_1)^{-1}P_{\bm{n}}(z_1)^{-1}\det{\begin{pmatrix}
    P_{\bm{n}+\bm{e}_k}(x) & P_{\bm{n}}(x) \\
    P_{\bm{n}+\bm{e}_k}(z_1) & P_{\bm{n}}(z_1)
\end{pmatrix}} \\ & = (x-z_1)^{-1}\left(P_{\bm{n}+\bm{e}_k}(x)-\frac{P_{\bm{n}+\bm{e}_k}(z_1)}{P_{\bm{n}}(z_1)}P_{\bm{n}}(x)\right), \qquad k = 1,\dots,r.}
This is very similar to \eqref{eq:one step ct thm one measure}, and identical in the case $r = 1$.}

\textnormal{
For type I polynomials, for the case $\Phi_1(x) = x - z_1$ and $\Phi_2(x) = \cdots = \Phi_r(x) = 1$ 
we get the similar formula
\nm{eq:one step partial ct type I}{& \widehat{A}_{\bm{n}}^{(1)}(x) = (x-z_1)^{-1}A_{\bm{n}+\bm{e}_1}^{(1)}(z_1)^{-1}\det{\begin{pmatrix}
        A_{\bm{n}}^{(1)}(x) & A_{\bm{n}+\bm{e}_1}^{(1)}(x) \\
        A_{\bm{n}}^{(1)}(z_1) & A_{\bm{n}+\bm{e}_1}^{(1)}(z_1)
    \end{pmatrix}}, \\
    & \widehat{A}_{\bm{n}}^{(j)}(x) = A_{\bm{n}+\bm{e}_1}^{(1)}(z_1)^{-1}\det{\begin{pmatrix}
        A_{\bm{n}}^{(j)}(x) & A_{\bm{n}+\bm{e}_1}^{(j)}(x) \\
        A_{\bm{n}}^{(1)}(z_1) & A_{\bm{n}+\bm{e}_1}^{(1)}(z_1)
    \end{pmatrix}}, \qquad j = 2,\dots,r.}
In the case $r = 1$ this is also \eqref{eq:one step ct thm one measure}, but with a different normalization on the polynomials. \eqref{eq:one step partial ct type I} corresponds to $\bm{m} = \bm{n}$ in Theorem \ref{thm:type I thm}. More generally, the choice $\bm{m} = \bm{n} + \bm{e}_1 - \bm{e}_k$ works for any $k = 1,\dots,r$, and then we have the determinantal formula
\nm{eq:one step general partial ct type I}{& \widehat{A}_{\bm{n}}^{(1)}(x) = (x-z_1)^{-1}A_{\bm{n}+\bm{e}_1}^{(1)}(z_1)^{-1}\det{\begin{pmatrix}
        A_{\bm{n}+\bm{e}_1-\bm{e}_k}^{(1)}(x) & A_{\bm{n}+\bm{e}_1}^{(1)}(x) \\
        A_{\bm{n}+\bm{e}_1-\bm{e}_k}^{(1)}(z_1) & A_{\bm{n}+\bm{e}_1}^{(1)}(z_1)
    \end{pmatrix}}, \\
    & \widehat{A}_{\bm{n}}^{(j)}(x) = A_{\bm{n}+\bm{e}_1}^{(1)}(z_1)^{-1}\det{\begin{pmatrix}
        A_{\bm{n}+\bm{e}_1-\bm{e}_k}^{(j)}(x) & A_{\bm{n}+\bm{e}_1}^{(j)}(x) \\
        A_{\bm{n}+\bm{e}_1-\bm{e}_k}^{(1)}(z_1) & A_{\bm{n}+\bm{e}_1}^{(1)}(z_1)
    \end{pmatrix}}, \qquad j = 2,\dots,r.}}

\subsubsection{\textnormal{\textbf{The simplest one-step Geronimus transforms}}}\label{ss:one-stepGeronimus}  \hfill

\textnormal{
The formulas in this example are dual to the ones in the previous example. 
For the one-step Geronimus transform with $\Psi_1(x) = \cdots = \Psi_r(x) = x - w_1$
, the type I polynomials are given by 
\nm{eq:one step full gt type I}{
\widecheck{A}_{\bm{n}}^{(j)}(x)
=A_{\bm{n}-\bm{e}_k}^{(j)}(x)  -  \frac{B_{\bm{n}-\bm{e}_k}(w_1)}{B_{\bm{n}}(w_1)} A_{\bm{n}}^{(j)}(x), \qquad j = 1,\dots,r,}
assuming $n_k > 0$. Here, $B_{\bm{n}}(w_1)$ is given by
\nm{eq:q one step full gt type I}{B_{\bm{n}}(w_1) = \sum_{j = 1}^r \widecheck{\mu}_j[A_{\bm{n}}^{(j)}(x)], \qquad \bm{n} \in \N^r.}
Meanwhile, in the case $\Psi_1(x) = x - w_1$ and $\Psi_2(x) = \cdots = \Psi_r(x) = 1$
, we get the type II formula
\nm{eq:one step partial gt type II}{
\widecheck{P}_{\bm{n}}(x)
= 
P_{\bm{n}-\bm{e}_1+\bm{e}_k}(x) - \frac{Q^{(1)}_{\bm{n}-\bm{e}_1+\bm{e}_k}(w_1)}{Q^{(1)}_{\bm{n}-\bm{e}_1}(w_1)} P_{\bm{n}-\bm{e}_1}(x), \qquad k = 1,\dots,r,}
where $Q^{(1)}_{\bm{n}}(w_1)$ is given by
\nm{eq:q one step partial gt type II}{Q^{(1)}_{\bm{n}}(w_1) = \widecheck{\mu}_1[P_{\bm{n}}(x)], \qquad \bm{n} \in \N^r.}
}

\subsubsection{\textnormal{\textbf{One-step Christoffel transform, type I polynomials}}}\label{ex:generalChr}  \hfill

\textnormal{For the more general one-step Christoffel transform, 
with $\Phi_j(x) = x-z_j$ for $z_j \in \C$ and $j = 1,\dots,r$ in \eqref{eq:general one step ct}
(note that $z_j$'s here are allowed to coincide), we get the determinantal formula
\nm{eq:one step general ct type I}{\widehat{A}_{\bm{n}}^{(j)}(x) = \Phi_j(x)^{-1}\det{\begin{pmatrix}
        A_{\bm{n}}^{(j)}(x) & A_{\bm{n}+\bm{e}_1}^{(j)}(x) & \cdots & A_{\bm{n}+\bm{e}_r}^{(j)}(x) \\
        A_{\bm{n}}^{(1)}(z_1) & A_{\bm{n}+\bm{e}_1}^{(1)}(z_1) & \cdots & A_{\bm{n}+\bm{e}_r}^{(1)}(z_1) \\
        \vdots & \vdots & \ddots & \vdots \\
        A_{\bm{n}}^{(r)}(z_r) & A_{\bm{n}+\bm{e}_1}^{(r)}(z_r) & \cdots & A_{\bm{n}+\bm{e}_r}^{(r)}(z_r) \\
    \end{pmatrix}}, \qquad j = 1,\dots,r.}
Here we chose the 
frame from $\bm{n}$ towards $\bm{n}+\bm{1} = (n_1+1,\dots,n_r+1)$, which corresponds to the sequence of the nearest neighbours in the one-step case. This is probably the nicest choice of sequence, but we remark that the sequence can also be chosen along the step-line if the index is on the step-line (more generally, we note that we can restrict all our determinantal formulas to the step-line when $\deg{\Phi_1} = \cdots = \deg{\Phi_r}$). }

\subsubsection{\textnormal{\textbf{One-step Geronimus transform, type II polynomials}}}  \hfill

\textnormal{In a similar fashion to the previous example, 
the type II formula for the one-step Geronimus transform 
with $\Psi_j(x) = x-w_j$ in \eqref{eq:general one step gt}
is given by
\nm{eq:one step general gt type II}{\widecheck{P}_{\bm{n}}(x) = \det{\begin{pmatrix}
P_{\bm{n}}(x) & P_{\bm{n}-\bm{e}_{1}}(x) & \cdots & P_{\bm{n}-\bm{e}_{r}}(x) \\
Q_{\bm{n}}^{(1)}(w_1) & Q_{\bm{n}-\bm{e}_{1}}^{(1)}(w_1) & \cdots & Q_{\bm{n}-\bm{e}_{r}}^{(1)}(w_1) \\
\vdots & \vdots & \ddots & \vdots \\
Q_{\bm{n}}^{(r)}(w_r) & Q_{\bm{n}-\bm{e}_{1}}^{(r)}(w_r) & \cdots & Q_{\bm{n}-\bm{e}_{r}}^{(r)}(w_r) \\
\end{pmatrix}}.}
Here we have 
\nm{eq:q one step gt type II functional}{Q_{\bm{n}}^{(j)}(w_j) = \widecheck{\mu}_j[P_{\bm{n}}(x)], \qquad \bm{n} \in \N^r.} 
If $n_j = 0$ then $\bm{n} - \bm{e}_j \notin \N^r$, and according to Theorem \ref{thm:type II thm} we can put $Q_{\bm{n}-\bm{e}_j}^{(j)}(w_j) = 1$. Equivalently, we can simply remove the row and column containing 
$Q_{\bm{n}-\bm{e}_j}^{(j)}(w_j)$ in \eqref{eq:one step general gt type II}.
If $\mu_j$ is a measure and $w_j \notin \supp{(\mu_j)}$ we have Geronimus transforms
\nm{eq:one step gt moprl measure}{\int f(x)d\widecheck{\mu}_j(x) = \int \frac{f(x)}{x-w_j}d\mu_j(x) + c_jf(w_j),}
and then we get
\nm{eq:q one step gt type II}{Q_{\bm{n}}^{(j)}(w_j) = \int \frac{P_{\bm{n}}(x)}{x-w_j}d\mu_j(x) + c_jP_{\bm{n}}(w_j), \qquad \bm{n} \in \N^r.}}

\subsubsection{\textnormal{\textbf{One-step Christoffel transform, type II polynomials}}}  \hfill

\textnormal{
The formula here is more complicated than the one in the type I case. We choose to include it in the case of a system of two measures $(\mu_1,\mu_2)$, but it will be clear how it generalizes to any system of measures/functionals. Here we have $(\widehat{\mu}_1,\widehat{\mu}_2) = (\Phi_1\mu_1,\Phi_2\mu_2)$, with $\Phi_1(x) = x-z_1$ and $\Phi_2(x) = x-z_2$. To apply Theorem \ref{thm:type II thm} we note that we have
\nm{eq:general one step ct two measures}{
(x-z_2)d\widehat{\mu}_1(x) = (x-z_1)(x-z_2)d\mu_1(x), \qquad (x-z_1)d\widehat{\mu}_2(x) = (x-z_1)(x-z_2)d\mu_2(x).
}
The Geronimus transforms $\widecheck{\mu}_1$ and $\widecheck{\mu}_2$ satisfy
\nm{eq:gt general one step ct}{(x-z_1)(x-z_2)d\widecheck{\mu}_1(x) = (x-z_1)d\mu_1(x), \qquad (x-z_1)(x-z_2)d\widecheck{\mu}_1(x) = (x-z_2)d\mu_1(x).}
We assume $z_2 \notin \supp(\mu_1)$ and $z_1 \notin \supp(\mu_2)$, and define Geronimus transforms $\widecheck{\mu}_1$ and $\widecheck{\mu}_2$ by
\nm{eq:gt choice}{d\widecheck{\mu}_1(x) = (x-z_2)^{-1}d\mu_1(x), \qquad d\widecheck{\mu}_2(x) = (x-z_1)^{-1}d\mu_2(x).}
We then have
\nm{eq:q general one step ct type II}{Q_{n,m}^{(1)}(z_2) = \int\frac{P_{n,m}(x)}{x-z_2}d\mu_1(x), \qquad Q_{n,m}^{(2)}(z_1) = \int\frac{P_{n,m}(x)}{x-z_1}d\mu_2(x), \qquad (n,m) \in \N^2,}
and $(x-z_1)(x-z_2)\widehat{P}_{n,m}(x)$ is given by the determinant
\nm{eq:general one step ct type II}{\det{\begin{pmatrix}
    P_{n+1,m+1}(x) & P_{n+1,m}(x) & P_{n,m+1}(x) & P_{n+1,m-1}(x) & P_{n-1,m+1}(x) \\
    P_{n+1,m+1}(z_1) & P_{n+1,m}(z_1) & P_{n,m+1}(z_1) & P_{n+1,m-1}(z_1) & P_{n-1,m+1}(z_1) \\
    P_{n+1,m+1}(z_2) & P_{n+1,m}(z_2) & P_{n,m+1}(z_2) & P_{n+1,m-1}(z_2) & P_{n-1,m+1}(z_2) \\
    Q_{n+1,m+1}^{(2)}(z_1) & Q_{n+1,m}^{(2)}(z_1) & Q_{n,m+1}^{(2)}(z_1) & Q_{n+1,m-1}^{(2)}(z_1) & Q_{n-1,m+1}^{(2)}(z_1) \\
    Q_{n+1,m+1}^{(1)}(z_2) & Q_{n+1,m}^{(1)}(z_2) & Q_{n,m+1}^{(1)}(z_2) & Q_{n+1,m-1}^{(1)}(z_2) & Q_{n-1,m+1}^{(1)}(z_2)
\end{pmatrix}}.}
If $n = 0$ then we remove the row and column containing $Q_{n-1,m+1}^{(1)}(z_2)$, and if $m = 0$ we remove the row and column containing $Q_{n+1,m-1}^{(2)}(z_1)$. The sequence starts at $(n+1,m+1)$, but we could also choose a sequence starting at $(n+2,m)$ or $(n,m+2)$. Here we chose the decreasing frame from $(n+1,m+1)$ towards $(n-1,m-1)$ as our sequence of multi-indices. It is also possible to choose a sequence 
on the step-line, when $(n,m)$ is on the step-line. }

\subsection{Uvarov Transforms}\hfill\\

Uvarov perturbations are also examples of rational perturbations of functionals. We first give type II determinantal formulas for a simple special case, and then give a type II formula for the most general case. 

\subsubsection{\textnormal{\textbf{One-step Uvarov transform, type II polynomials}}}  \hfill

\textnormal{We assume $\mu_1$ and $\mu_2$ are measures with $z_1 \notin \supp(\mu_1)\cup\supp(\mu_2)$ and consider Uvarov perturbations $(\widetilde{\mu}_1,\widetilde{\mu}_2)$ of the type 
\nm{eq:uvarov two measures}{\widetilde{\mu}_1 = \mu_1 + c_1\delta_{z_1}, \qquad \widetilde{\mu}_2 = \mu_2 + c_2\delta_{z_1},} 
where at most one of $c_1$ and $c_2$ are $0$. Theorem \ref{thm:type I thm} and Theorem \ref{thm:type II thm} can be applied to these systems as well, since we have
\nm{eq:uvarov property two measures}{\int f(x)(x-z_1)d\widetilde{\mu}_j(x) = \int f(x)(x-z_1)d\mu_j(x), \qquad j = 1,2. 
} 
The Geronimus transforms $\widecheck{\mu}_1$ and $\widecheck{\mu}_2$ have to satisfy
\nm{eq:gt uvarov}{\widecheck{\mu}_j[(x-z_1)P(x)] = \int P(x)d\mu_j(x) + c_jP(z_1), \qquad j = 1,2. 
}
We make the choice
\nm{eq:gt choice uvarov}{\widecheck{\mu}_j[ P(x)] = \int \frac{P(x)}{x-z_1}d\mu_j(x) + c_jP'(z_1), \qquad j = 1,2.}
Here, $\widecheck{\mu}_j = (x-z_1)^{-1}\mu_j - c_j\delta_{z_1}'$, where $\delta_{z_1}'$ is the functional $P \mapsto -P'(z_1)$, and $(x-z_1)^{-1}\mu_j$ is the functional generated by the measure $(x-z_1)^{-1}d\mu(x)$. Hence $\widecheck{\mu}_1$ and $\widecheck{\mu}_2$ are functionals but not measures. We then have 
\nm{eq:q one step uvarov}{Q_{n,m}^{(j)}(z_1) = \int \frac{P_{n,m}(x)}{x-z_1}d\mu_j(x) + c_j P_{n,m}'(z_1), \qquad j = 1,2, 
\qquad (n,m) \in \N^2,}
and we get a type II formula of the form
\nm{eq:one step uvarov type II}{\widetilde{P}_{n,m}(x) = (x-z_1)^{-1}\det{\begin{pmatrix}
    P_{n+1,m}(x) & P_{n,m}(x) & P_{n-1,m}(x) & P_{n,m-1}(x) \\
    P_{n+1,m}(z_1) & P_{n,m}(z_1) & P_{n-1,m}(z_1) & P_{n,m-1}(z_1) \\
    Q_{n+1,m}^{(1)}(z_1) & Q_{n,m}^{(1)}(z_1) & Q_{n-1,m}^{(1)}(z_1) & Q_{n,m-1}^{(1)}(z_1) \\
    Q_{n+1,m}^{(2)}(z_1) & Q_{n,m}^{(2)}(z_1) & Q_{n-1,m}^{(2)}(z_1) & Q_{n,m-1}^{(2)}(z_1)
\end{pmatrix}}.}
If $n = 0$ we remove the row and column containing $Q_{n-1,m}^{(1)}(z_1)$, and if $m = 0$ we remove the row and column containing $Q_{n,m-1}^{(2)}(z_1)$. Different choices of sequences can also restrict all polynomials to any choice of path. Note that the sequence chosen here is neither a path nor a frame, but rather a combination of the two, similar to sequence in the middle of the figure in Theorem \ref{thm:type II thm}. 
}

\subsubsection{\textnormal{\textbf{General Uvarov transform, type II polynomials}}}\label{ex:fullUvarov}  \hfill

\textnormal{We say that a general Uvarov transform is a system 
\nm{eq:general uvarov}{(\widetilde{\mu}_1,\dots,\widetilde{\mu}_r) = \bigg(\mu_1 + \sum_{k = 1}^N c_{1,k}\delta_{z_k},\dots,\mu_r + \sum_{k = 1}^N c_{r,k}\delta_{z_k}\bigg), \qquad z_1,\dots,z_N \in \C\setminus\cup_{j=1}^r \supp\mu_j.}
Note that the location of the point masses are the same for each measure, but the weights are different. If we put different weights to $0$ for different $\mu_j$'s we then have the case where we add different points to each measure. For example, with $N = r$, $c_{j,j} = c_j$, and $c_{j,k} = 0$ when $j \neq k$, we get the Uvarov transforms on the form
\nm{eq:general one step uvarov}{(\widetilde{\mu}_1,\dots,\widetilde{\mu}_r) = (\mu_1 + c_1\delta_{z_1},\dots,\mu_r + c_r\delta_{z_r})}} 

\textnormal{The systems \eqref{eq:general uvarov} satisfy 
\nm{eq:uvarov property}{(\Phi\widetilde{\mu}_1,\dots,\Phi\widetilde{\mu}_r) = (\Phi\mu_1,\dots,\Phi\mu_r),}
for the polynomial $\Phi(x) = \prod_{k = 1}^{N}(x-z_k)$. From here we can apply both Theorem \ref{thm:type I thm} and Theorem \ref{thm:type II thm} to these systems. If we allow $\Phi$ to have zeros of higher multiplicity then we can also add derivatives of $\delta_{z_k}$'s. }

\textnormal{The type II formula for systems satisfying \eqref{eq:uvarov property} is
\nm{eq:det formula uvarov type II}{\widetilde{P}_{\bm{n}} = \Phi(x)^{-1}\det{\begin{pmatrix}
        P_{\bm{n}+N\bm{e}_1}(x) & P_{\bm{n}+N\bm{e}_1-\bm{s}_1}(x) & \dots & P_{\bm{n}+N\bm{e}_1-\bm{s}_{(r+1)N}}(x) \\
        P_{\bm{n}+N\bm{e}_1}(\bm{z}) & P_{\bm{n}+N\bm{e}_1-\bm{s}_1}(\bm{z}) & \dots & P_{\bm{n}+N\bm{e}_1-\bm{s}_{(r+1)N}}(\bm{z}) \\
        Q_{\bm{n}+N\bm{e}_1}^{(1)}(\bm{z}) & Q_{\bm{n}+N\bm{e}_1-\bm{s}_1}^{(1)}(\bm{z}) & \dots & Q_{\bm{n}+N\bm{e}_1-\bm{s}_{(r+1)N}}^{(1)}(\bm{z}) \\
        \vdots & \vdots & \ddots & \vdots \\
        Q_{\bm{n}+N\bm{e}_1}^{(r)}(\bm{z}) & Q_{\bm{n}+N\bm{e}_1-\bm{s}_1}^{(r)}(\bm{z}) & \dots & Q_{\bm{n}+N\bm{e}_1-\bm{s}_{(r+1)N}}^{(r)}(\bm{z}) \\
    \end{pmatrix}}.}
    {It is easy to check that $\Phi\widecheck{\mu}_j = \widetilde{\mu}_j$ is satisfied by
    \nm{eq:}{\widecheck\mu_j = \frac{1}{\Phi}\mu_j - \sum_{l=1}^N \frac{c_{j,l}}{\Phi'(z_l)} \delta'_{z_l} + \sum_{l = 1}^N d_{j,l}\delta_{z_l}, \qquad j = 1,\dots,r,}
    for any choice of weights $d_{j,l}$, where $(1/\Phi)\mu_j$ is the functional given by the measure $\Phi(x)^{-1}d\mu_j(x)$, and $\delta_{z}'[P(x)] = -P'(z)$. Then the $Q$'s are given by
    \m{
        Q_{\bm{n}}^{(j)}(z_k) & = \widecheck{\mu}_j\left[P_{\bm{n}}(x)\frac{\Phi(x)}{x-z_k}\right] \\
    & = 
    \int \frac{P_{\bm{n}}(x)}{x-z_k}d\mu_j(x) - \sum_{l = 1}^N\frac{c_{j,l}}{\Phi'(z_l)}\delta_{z_l}'\left[P_{\bm{n}}(x)\frac{\Phi(x)}{x-z_k}\right] + d_{j,k}P_{\bm{n}}(z_k)\Phi'(z_k), 
    }
    for $ j= 1,\ldots,r$ and $k = 1,\dots,N$.
    A quick calculation shows that
    \m{-\delta_{z_l}'\left[P_{\bm{n}}(x)\frac{\Phi(x)}{x - z_k}\right] & = \delta_{z_l}\left[P_{\bm{n}}'(x)\frac{\Phi(x)}{x-z_k} + P_{\bm{n}}(x)\frac{\Phi'(x)(x-z_k) - \Phi(x)}{(x-z_k)^2}\right] \\ & = \begin{cases}
        \Phi'(z_l)P_{\bm{n}}(z_l)(z_l-z_k)^{-1}, & k \neq l, \\
        \Phi'(z_k)P_{\bm{n}}'(z_k) + \frac{1}{2}P_{\bm{n}}(z_k)\Phi''(z_k), & k = l.
    \end{cases}}
    With the choice $d_{j,k} = -\frac{1}{2}c_{j,k}\Phi''(z_l)/\Phi'(z_l)^2$ we then end up with
    \nm{eq:q general uvarov type II}{Q_{\bm{n}}^{(j)}(z_k) & = \int \frac{P_{\bm{n}}(x)}{x-z_k}d\mu_j(x) + c_{j,k}P_{\bm{n}}'(z_k) + \sum_{l \neq k} c_{j,l}\frac{P_{\bm{n}}(z_k)}{z_l-z_k}, \qquad j = 1,\dots,r.}
    } 
    }

    \textnormal{{Now observe that in the determinant \eqref{eq:det formula uvarov type II} we can subtract multiples of the rows with $P_{\bm{n}}$'s from the rows with $Q_{\bm{n}}^{(j)}$'s. This shows that \eqref{eq:det formula uvarov type II} holds 
    with $Q_{\bm{n}}^{(j)}(z_k)$ replaced by the simpler expression
    \nm{eq:simpler q general uvarov type II}{
    \int \frac{P_{\bm{n}}(x)}{x-z_k}d\mu_j(x) + c_{j,k} P_{\bm{n}}'(z_k), \qquad j = 1,\dots,r, \qquad k = 1,\dots,N.}
    In particular, when $r = 1$ we get \eqref{eq:pureUvarov}.}}
    


\textnormal{
The sequence of multi-indices $\set{\bm{n}+N\bm{e}_1-\bm{s}_j}_{j = 0}^{(r+1)N}$ is admissible from $\bm{n}+N\bm{e}_1$ to $\bm{n}-N\bm{1}$, for example a backwards frame. Note that if we remove the set of rows and columns containing $Q_{\bm{n}+N\bm{e}_1-\bm{s}_i}^{(1)}(\bm{z})$ for $\bm{n}+N\bm{e}_1-\bm{s}_i \in \set{\bm{n}-j\bm{e}_1}_{j = 1}^{N}$, we get \eqref{eq:partial ct type II} (with a shift, and possibly different $Q$'s). If we instead remove the rows and columns containing $P_{\bm{n}+N\bm{e}_1-\bm{s}_i}(\bm{z})$ for $\bm{n}+N\bm{e}_1-\bm{s}_i$ in $\set{\bm{n}+j\bm{e}_1}_{j = 1}^{N}$, we get something more similar to \eqref{eq:general gt type II}. If we remove every row and column containing $Q$'s for indices in $\bigcup_{i = 1}^{r}\set{\bm{n}-j\bm{e}_i}_{j = 1}^{N}$, we get \eqref{eq:full ct type II} with $\bm{z}_1 = \cdots = \bm{z}_r = \bm{z}$. }


\section{Example: Rational perturbations of multiple Laguerre  polynomials of the second kind}\label{ss:Laguerre}

Theorems~\ref{thm:type I thm} and~\ref{thm:type II thm} apply for any system $\bm{\mu}$ with sufficiently many normal indices. If one has an explicit form of type I or type II multiple orthogonal polynomials $\bm{\mu}$ then orthogonal polynomials of the perturbed system $\widetilde{\mu}$ is just a matter of calculation. Let us illustrate on an example of perturbations of the multiple Laguerre  polynomials of the second kind for the case of two measures. Let $\bm{\mu} = (\mu_1,\mu_2)$ with
\begin{equation}
    d\mu_1(x) = x^{\alpha} e^{-\beta_1 x} \,dx, \qquad d\mu_2(x) = x^{\alpha}e^{-\beta_2 x} \,dx,
\end{equation}
on $[0,\infty)$ for some $\alpha>-1$, $\beta_1,\beta_2>0$, $\beta_1\ne \beta_2$.

Recall~\cite{ABVA,VAC01,Ismail} that the monic type II polynomials for such systems are given by
\begin{align}
    \label{eq:LagII}
    P_{\bm{n}}(x) =& \sum_{j=0}^{n_1} \sum_{k=0}^{n_2} c_{\bm{n};j,k} x^{j+k}, 
    \\
    \label{eq:LagtypeIIconst}
    \mbox{where }
    &c_{\bm{n};j,k}= \frac{(-1)^{|\bm{n}|+j+k} }{\beta_1^{n_1-j} \beta_2^{n_2-k} } 
    \frac{n_1! \,n_2!}{j!\,k!}
    \binom{\alpha+|\bm{n}|}{n_1-j} \binom{\alpha+n_2+j}{n_2-k} ,
\end{align}
and the type I polynomials were recently computed in~\cite{BDFM23,BDFM25} to be
\begin{align}
\label{eq:LagI}
    A_{\bm{n}}^{(1)}(x) =& \sum_{j=0}^{n_1-1} c^{(1)}_{\bm{n};j} x^{j}, 
    \\
    \label{eq:LagtypeIconst}
    \mbox{where }
    &c^{(1)}_{\bm{n};j}= \frac{(-1)^{|\bm{n}|-1}  \beta_1^{\alpha+|\bm{n}|} \beta_2^{n_2} (\alpha+n_2+1)_{n_1-1}}{(n_1-1)! (\beta_1-\beta_2)^{n_2} \, \Gamma(\alpha+|\bm{n}|)} 
    \sum_{k=0}^{n_1-1-j} \frac{(-n_1+1)_{j+k} (n_2)_k}{(\alpha+n_2+1)_{j+k} j!\, k!} 
     \frac{\beta_1^{j+k}}{(\beta_1-\beta_2)^k},
     \\
     \label{eq:LagI2}
     A_{\bm{n}}^{(2)}(x) =& \sum_{j=0}^{n_2-1} c^{(2)}_{\bm{n};j} x^{j}, 
    \\
    \label{eq:LagtypeIconst2}
    \mbox{where }
    &c^{(2)}_{\bm{n};j}= \frac{(-1)^{|\bm{n}|-1}  \beta_2^{\alpha+|\bm{n}|} \beta_1^{n_1} (\alpha+n_1+1)_{n_2-1}}{(n_2-1)! (\beta_2-\beta_1)^{n_1} \, \Gamma(\alpha+|\bm{n}|)} 
    \sum_{k=0}^{n_2-1-j} \frac{(-n_2+1)_{j+k} (n_1)_k}{(\alpha+n_1+1)_{j+k} j!\, k!} 
     \frac{\beta_2^{j+k}}{(\beta_2-\beta_1)^k},
\end{align}
where $(x)_a = x (x+1)\ldots (x+a-1)$ is the Pochhammer symbol. 

Now let us consider $\widetilde{\bm\mu} = (\widetilde{\mu}_1,\widetilde{\mu}_2)$ given by
\begin{align}
    \widetilde{\mu}_1 &= x \mu_1, 
    \\
    \widetilde{\mu}_2 &= \frac{1}{x} \mu_2 + w \delta_0,
\end{align}
where $ \frac{1}{x} \mu_2$ stands for the pure Geronimus transform without a point mass (we require $\alpha>0$ so that $\widetilde{\mu}_2$ is a measure), $\delta_0$ is the point mass at $x=0$, and $w\in\bbC$ is an arbitrary weight. Explicitly, see~\eqref{eq:LagPert1}--\eqref{eq:LagPert2}.
Observe that even in the case $w=0$, the new system is no longer of the multiple Laguerre type. 

\begin{prop}\label{prop:ex1}
    Consider the system $\widetilde{\bm\mu} = (\widetilde{\mu}_1,\widetilde{\mu}_2)$ given by
\begin{align}\label{eq:LagPert1}
    d\widetilde\mu_1(x) & = x^{\alpha+1} e^{-\beta_1 x} \,dx,
    \\
    \label{eq:LagPert2}
    d\widetilde\mu_2(x) &= x^{\alpha-1} e^{-\beta_2 x} \,dx + w\delta_0,
\end{align}
on $[0,\infty)$ for some $\alpha>1$, $\beta_1,\beta_2>0$, $\beta_1\ne \beta_2$, $w\in\bbC$.

Then $\bm{n}=(n_1,n_2)$ is a normal index for  $\widetilde{\bm\mu}$ if and only if $D^{(II)}_{\bm{n}} \ne 0$, where
\begin{equation}\label{eq:LIID}
    D^{(II)}_{\bm{n}}=
    \det \begin{pmatrix}
         P_{\bm{n}}(0) & P_{\bm{n}-\bm{e}_2}(0) & P_{\bm{n}-2\bm{e}_2}(0) \\
         Q_{\bm{n}}(0) & Q_{\bm{n}-\bm{e}_2}(0) & Q_{\bm{n}-2\bm{e}_2}(0) \\
         R_{\bm{n}}(0) & R_{\bm{n}-\bm{e}_2}(0) & R_{\bm{n}-2\bm{e}_2}(0)
    \end{pmatrix},
\end{equation}
where $P_{\bm{n}}(x), Q_{\bm{n}}(x), R_{\bm{n}}(x)$ are given by~\eqref{eq:LagII},\eqref{eq:exQ}, \eqref{eq:exR}, respectively.
In this case the unique monic type II polynomial of $\widetilde{\bm\mu}$ is given by
\begin{equation}\label{eq:LIIdet}
    \widetilde{P}_{\bm{n}}(x) 
    =
    \frac{1}{D^{(II)}_{\bm{n}}} \frac{1}{x}
    \det
    \begin{pmatrix}
        P_{\bm{n}+\bm{e}_2}(x) & P_{\bm{n}}(x) & P_{\bm{n}-\bm{e}_2}(x) & P_{\bm{n}-2\bm{e}_2}(x) \\
        P_{\bm{n}+\bm{e}_2}(0) & P_{\bm{n}}(0) & P_{\bm{n}-\bm{e}_2}(0) & P_{\bm{n}-2\bm{e}_2}(0) \\
        Q_{\bm{n}+\bm{e}_2}(0) & Q_{\bm{n}}(0) & Q_{\bm{n}-\bm{e}_2}(0) & Q_{\bm{n}-2\bm{e}_2}(0) \\
        R_{\bm{n}+\bm{e}_2}(0) & R_{\bm{n}}(0) & R_{\bm{n}-\bm{e}_2}(0) & R_{\bm{n}-2\bm{e}_2}(0) 
    \end{pmatrix}.
\end{equation}
Here 
$P_{\bm{n}}(0) = c_{\bm{n};0,0}$, see~\eqref{eq:LagtypeIIconst}, and
\begin{align}
\label{eq:exQ}
    Q_{\bm{n}}(0) & = \int_0^\infty \frac{P_{\bm{n}}(x)}{x} d\mu_2 = 
    \sum_{j=0}^{n_1} \sum_{k=0}^{n_2} c_{\bm{n};j,k} \frac{\Gamma(\alpha+j+k)}{\beta_2^{\alpha+j+k}},
    \\
    \label{eq:exR}
    R_{\bm{n}}(0) & = \int_0^\infty \frac{P_{\bm{n}}(x)}{x^2} d\mu_2 + c P_{\bm{n}}'(0)= 
    \sum_{j=0}^{n_1} \sum_{k=0}^{n_2} c_{\bm{n};j,k} \frac{\Gamma(\alpha+j+k-1)}{\beta_2^{\alpha+j+k-1}}
    + (c_{\bm{n};0,1} + c_{\bm{n};1,0}) w
    .
\end{align}
\end{prop}
\begin{proof}
In order to get the formulas, we apply Theorem~\ref{thm:type II thm} with $\Phi(x) = x$, $N=1$, $\Psi_1(x) = 1$, $\Psi_2(x)= x^2$, $\bm{M} = (0,2)$. We choose
\begin{align}
    \widecheck\mu_1 & = \mu_1, \\
    \label{eq:mu2}
    \widecheck\mu_2 & = \frac{1}{x^2} \mu_2 - w \delta_0 '.
\end{align}
Taking into account the multiplicity considerations from Section~\ref{general case} we obtain~\eqref{eq:LIIdet} and~\eqref{eq:LIID} 
with the last two rows consisting of quantities
\begin{align}
\label{eq:justTemp}
    Q_{\bm{n}}(0)&  = \widecheck\mu_2\Big[P_{\bm{n}}(x) \frac{x^2}{x}\Big] = \widecheck\mu_2[x P_{\bm{n}}(x)] = \int_0^\infty \frac{P_{\bm{n}}(x)}{x} d\mu_2(x)  + w P_{\bm{n}}(0), 
    \\
    \label{eq:exR2}
    R_{\bm{n}}(0) & = \widecheck\mu_2\Big[P_{\bm{n}}(x) \frac{x^2}{x^2}\Big] =\int_0^\infty \frac{P_{\bm{n}}(x)}{x^2} d\mu_2(x)   + w P'_{\bm{n}}(0).
\end{align}
Note that we can remove the $w P_{\bm{n}}(0)$ term in~\eqref{eq:justTemp} since we are able to subtract a multiple of the second row in~\eqref{eq:LIIdet} from the third row (this is a consequence of the overlapping root of $\Phi$ and $\Psi_2$ at $x=0$, and otherwise would not be legal here).


Finally, using~\eqref{eq:LagII} and
\begin{equation}
    \int_0^\infty x^s d\mu_2(x) = \int_0^\infty x^{\alpha+s} e^{-\beta_2 x} dx = \frac{\Gamma(\alpha+s+1)}{\beta_2^{\alpha+s+1}}, 
\end{equation}
for $s\ge -2$ (recall that $\alpha>1$), we obtain~\eqref{eq:exQ} and~\eqref{eq:exR}. 
\end{proof}
\begin{rem}
    Observe that we required $\alpha>1$ in the above statement. This was needed in order for our choice of~\eqref{eq:mu2} to be a measure and for the integral in~\eqref{eq:exR},~\eqref{eq:exR2} to make sense (note the presence of $\Gamma(\alpha-1)$ in the sum on the right-hand side of ~\eqref{eq:exR}.
    
    If $0<\alpha\le 1$, then the system of measures $\widetilde{\bm\mu} = (\widetilde{\mu}_1,\widetilde{\mu}_2)$ in~\eqref{eq:LagPert1}--\eqref{eq:LagPert2} is perfectly well defined, and Theorem~\ref{thm:type II thm} is applicable with the following modification. $\widecheck{\mu}_2$ has to be chosen differently, specifically as the functional $\widecheck{\mu}_2[P] = \int_0^\infty \frac{P(x)-P(0)}{x^2} d\mu_2(x) + w P'(0)$, see~\eqref{eq:one-step geronimus functional}. Then $x\widecheck{\mu}_2 = \widetilde{\mu}$ holds, and the rest of the argument goes through, with the only difference being that $R_{\bm{n}}(0)$ becomes
    \begin{equation}
           R_{\bm{n}}(0)  = \int_0^\infty \frac{P_{\bm{n}}(x)-P_{\bm{n}}(0)}{x^2} d\mu_2(x) + c P_{\bm{n}}'(0)= 
    \mathop{\underset{j+k\ne 0}{{\sum_{j=0}^{n_1} \sum_{k=0}^{n_2}}}}
      c_{\bm{n};j,k} \frac{\Gamma(\alpha+j+k-1)}{\beta_2^{\alpha+j+k-1}}
    + (c_{\bm{n};0,1} + c_{\bm{n};1,0}) w.
    \end{equation} 
\end{rem}

The exact same argument works for type I polynomials with the help of Theorem~\ref{thm:type I thm}. The case $0<\alpha\le 1$ does not cause any trouble here so we can immediately assume $\alpha>0$.

\begin{prop}\label{prop:ex2}
    Consider the system $\widetilde{\bm\mu} = (\widetilde{\mu}_1,\widetilde{\mu}_2)$ given by~\eqref{eq:LagPert1}, ~\eqref{eq:LagPert2}, for some $\alpha>0$, $\beta_1,\beta_2>0$, $\beta_1\ne \beta_2$, $w\in\bbC$.

Then $\bm{n}=(n_1,n_2)$ is a normal index for  $\widetilde{\bm\mu}$ if and only if $D^{(I)}_{\bm{n}} \ne 0$, where
\begin{equation}\label{eq:LID}
    D^{(I)}_{\bm{n}}=
    \det \begin{pmatrix}
        A^{(1)}_{\bm{n}}(0) & A^{(1)}_{\bm{n}+\bm{e}_1}(0) & A^{(1)}_{\bm{n}+2\bm{e}_1}(0) \\
        A^{(1)\,\prime}_{\bm{n}}(0) & A^{(1)\,\prime}_{\bm{n}+\bm{e}_1}(0) & A^{(1)\,\prime}_{\bm{n}+2\bm{e}_1}(0) \\
        B_{\bm{n}}(0) & B_{\bm{n}+\bm{e}_1}(0) & B_{\bm{n}+2\bm{e}_1}(0)
    \end{pmatrix},
\end{equation}
where $A^{(1)}_{\bm{n}}(x)$, $A^{(2)}_{\bm{n}}(x)$, and $B_{\bm{n}}(0)$ are given by~\eqref{eq:LagI},\eqref{eq:LagI2}, and \eqref{eq:exB}, respectively.
In this case the unique normalized type I polynomials of $\widetilde{\bm\mu}$ are given by
\begin{align}\label{eq:LIdet1}
    \widetilde{A}^{(1)}_{\bm{n}}(x) 
    &=
    \frac{1}{D^{(I)}_{\bm{n}}} \frac{1}{x^2}
    \det
    \begin{pmatrix}
        A^{(1)}_{\bm{n}-\bm{e}_1}(x) & A^{(1)}_{\bm{n}}(x) & A^{(1)}_{\bm{n}+\bm{e}_1}(x) & A^{(1)}_{\bm{n}+2\bm{e}_1}(x) \\
        A^{(1)}_{\bm{n}-\bm{e}_1}(0) & A^{(1)}_{\bm{n}}(0) & A^{(1)}_{\bm{n}+\bm{e}_1}(0) & A^{(1)}_{\bm{n}+2\bm{e}_1}(0) \\
        A_{\bm{n}-\bm{e}_1}^{(1)\,\prime}(0) & A^{(1)\,\prime}_{\bm{n}}(0) & A^{(1)\,\prime}_{\bm{n}+\bm{e}_1}(0) & A^{(1)\,\prime}_{\bm{n}+2\bm{e}_1}(0) \\
        B_{\bm{n}-\bm{e}_1}(0) & B_{\bm{n}}(0) & B_{\bm{n}+\bm{e}_1}(0) & B_{\bm{n}+2\bm{e}_1}(0) 
    \end{pmatrix},
    \\
    \label{eq:LIdet2}
    \widetilde{A}^{(2)}_{\bm{n}}(x) 
    &=
    \frac{1}{D^{(I)}_{\bm{n}}} 
    \det
     \begin{pmatrix}
        A^{(2)}_{\bm{n}-\bm{e}_1}(x) & A^{(2)}_{\bm{n}}(x) & A^{(2)}_{\bm{n}+\bm{e}_1}(x) & A^{(2)}_{\bm{n}+2\bm{e}_1}(x) \\
        A^{(1)}_{\bm{n}-\bm{e}_1}(0) & A^{(1)}_{\bm{n}}(0) & A^{(1)}_{\bm{n}+\bm{e}_1}(0) & A^{(1)}_{\bm{n}+2\bm{e}_1}(0) \\
        A_{\bm{n}-\bm{e}_1}^{(1)\,\prime}(0) & A^{(1)\,\prime}_{\bm{n}}(0) & A^{(1)\,\prime}_{\bm{n}+\bm{e}_1}(0) & A^{(1)\,\prime}_{\bm{n}+2\bm{e}_1}(0) \\
        B_{\bm{n}-\bm{e}_1}(0) & B_{\bm{n}}(0) & B_{\bm{n}+\bm{e}_1}(0) & B_{\bm{n}+2\bm{e}_1}(0) 
    \end{pmatrix}.
\end{align}
Here 
$A^{(1)}_{\bm{n}}(0) = c_{\bm{n};0}^{(1)}$, $A^{(1)\,\prime}_{\bm{n}}(0) = c_{\bm{n};1}^{(1)}$, see~\eqref{eq:LagtypeIconst}, and
\begin{align}
\label{eq:exB}
    B_{\bm{n}}(0) & = \int_0^\infty \frac{A^{(1)}_{\bm{n}}(x)}{x} d\mu_1 + \int_0^\infty \frac{A^{(2)}_{\bm{n}}(x)}{x} d\mu_2 +w A^{(2)}_{\bm{n}}(0) 
    \\
    & = 
    \sum_{j=0}^{n_1-1} c^{(1)}_{\bm{n};j} \frac{\Gamma(\alpha+j)}{\beta_1^{\alpha+j}} + \sum_{j=0}^{n_2-1} c^{(2)}_{\bm{n};j} \frac{\Gamma(\alpha+j)}{\beta_2^{\alpha+j}} + w c_{\bm{n};0}^{(2)}.
\end{align}
\end{prop}
\begin{proof}
Apply Theorem~\ref{thm:type I thm} with  $\Psi(x) = x$, $M=1$, $\Phi_1(x) = x^2$, $\Phi_2(x) = 1$, $\bm{N} = (2,0)$. We choose
\begin{align}
    \widecheck\mu_1 & = \frac1x \mu_1, \\
    \widecheck\mu_2 & = \frac{1}{x} \mu_2 + w \delta_0.
\end{align}
Taking into account the multiplicity considerations from Section~\ref{general case} we obtain~\eqref{eq:LIdet1}, \eqref{eq:LIdet2} and~\eqref{eq:LID} 
with the last row consisting of the quantity
\begin{equation}
    B_{\bm{n}}(0)  = \widecheck\mu_1\Big[A^{(1)}_{\bm{n}}(x) \frac{x}{x}\Big] + \widecheck\mu_2\Big[A^{(2)}_{\bm{n}}(x) \frac{x}{x}\Big] = 
    \int_0^\infty \frac{A^{(1)}_{\bm{n}}(x)}{x} d\mu_1(x) + \int_0^\infty \frac{A^{(2)}_{\bm{n}}(x)}{x} d\mu_2(x)  + w A^{(2)}_{\bm{n}}(0).
\end{equation}
Note that unlike the type II case above, we {\it cannot} remove the term  $w A^{(2)}_{\bm{n}}(0)$ here (since $\Phi_2$ does not have a root at $x=0$). 

Finally, using~\eqref{eq:LagI}, ~\eqref{eq:LagI2} and
\begin{align}
    \int_0^\infty x^s d\mu_1 & = \int_0^\infty x^{\alpha+s} e^{-\beta_1 x} dx = \frac{\Gamma(\alpha+s+1)}{\beta_1^{\alpha+s+1}}, 
    \\
    \int_0^\infty x^s d\mu_2 & = \int_0^\infty x^{\alpha+s} e^{-\beta_2 x} dx = \frac{\Gamma(\alpha+s+1)}{\beta_2^{\alpha+s+1}}, 
\end{align}
for $s\ge -1$ (recall that $\alpha>0$), we obtain~\eqref{eq:exB}. 
\end{proof}
\begin{rem}
   Normality of the system $\widetilde{\bm\mu}$ in~\eqref{eq:LagPert1}, ~\eqref{eq:LagPert2} at a chosen location $\bm{n}$ depends on $w$. This normality fails if and only if $D_{\bm{n}}^{(I)} = 0$, where $D_{\bm{n}}^{(I)}$ is given in~\eqref{eq:LID}. It is easy to see that $D_{\bm{n}}^{(I)}$ is a polynomial of degree $\le 1$ in $w$, so there is at most one real value of $w$ for which $\widetilde{\bm\mu}$ is not normal at $\bm{n}$. Consequently, $\widetilde{\bm\mu}$  is always perfect if $w\in\bbC\setminus\bbR$, and there are at most countably many $w\in\bbR$ where it  is not perfect.
\end{rem}
\begin{rem}
   Applying the same argument to Proposition~\ref{prop:ex1}, we obtain that for each $\bm{n}$, $D_{\bm{n}}^{(I)}$ and $D_{\bm{n}}^{(II)}$  are non-zero multiples of each other, viewed as functions of $w$. It would be interesting to have a direct proof of this statement.  More generally, in the setting of Theorems~\ref{thm:type I thm} and~\ref{thm:type II thm}, it follows that $D_{\bm{n}}^{(I)} = 0$ if and only if $D_{\bm{n}}^{(II)} = 0$, but a direct argument for this is not known to us. Note that in the case $\Phi_1 = \dots = \Phi_r$ and $\Psi_1 = \dots = \Psi_r = 1$, this reduces to the Mahler identity from~\cite{Mahler}, see e.g., \cite{Geronimo}.
\end{rem}











\bibsection

\begin{biblist}[\small]

\bib{Aptekarev}{article}{
   author={Aptekarev, A.I.},
   title={Multiple orthogonal polynomials},
   journal={Journal of Computational and Applied Mathematics},
   volume={99},
   year={1998},
   pages={423-447},
}

\bib{ABVA}{article}{
    AUTHOR = {Aptekarev, A.I.},
    AUTHOR = {Branquinho, A.},
    AUTHOR = {Van Assche, W.},
     TITLE = {Multiple orthogonal polynomials for classical weights},
   JOURNAL = {Trans. Amer. Math. Soc.},
    VOLUME = {355},
      YEAR = {2003},
    NUMBER = {10},
     PAGES = {3887--3914},
}

\bib{ADMVA}{article}{
    AUTHOR = {Aptekarev, A.I.},
    AUTHOR = {Derevyagin, M.},
    AUTHOR = {Miki, H.},
    AUTHOR = {Van Assche, W.},
     TITLE = {Multidimensional {T}oda lattices: continuous and discrete time},
   JOURNAL = {SIGMA Symmetry Integrability Geom. Methods Appl.},
    VOLUME = {12},
      YEAR = {2016},
     PAGES = {Paper No. 054, 30},
}

\bib{AKVI}{article}{
    AUTHOR = {Aptekarev, A.I.},
    AUTHOR={Kaliaguine, V.},
    AUTHOR={Van Iseghem, J.},
     TITLE = {The genetic sums' representation for the moments of a system
              of {S}tieltjes functions and its application},
   JOURNAL = {Constr. Approx.},
    VOLUME = {16},
      YEAR = {2000},
    NUMBER = {4},
     PAGES = {487--524},
}

\bib{BaiDer}{article}{
   author={Bailey, R.},
   author={Derevyagin, M.},
   title={Complex Jacobi matrices generated by Darboux transformations},
   journal={J. Approx. Theory},
   volume={288},
   date={2023},
   pages={Paper No. 105876, 33},
}


\bib{BCVA}{article}{
   author={Beckermann, B.},
   author={Coussement, J.},
   author={Van Assche, W.},
   title={Multiple Wilson and Jacobi-Pi\~neiro polynomials},
   journal={J. Approx. Theory},
   volume={132},
   date={2005},
   number={2},
   pages={155--181}
}

\bib{BDFM23}{article}{
    AUTHOR = {Branquinho, A.},
    AUTHOR = {D\'iaz, J.E.F.},
    AUTHOR = {Foulqui\'{e}-Moreno, A.},
    AUTHOR = {Ma\~{n}as, M.},
     TITLE = {Hahn multiple orthogonal polynomials of type {I}:
              {H}ypergeometric expressions},
   JOURNAL = {J. Math. Anal. Appl.},
    VOLUME = {528},
      YEAR = {2023},
    NUMBER = {1},
     PAGES = {Paper No. 127471, 27},
}

\bib{BDFLM}{article}{
    AUTHOR = {Branquinho, A.},
    AUTHOR = {D\'iaz, J.E.F.},
    AUTHOR = {Foulqui\'{e}-Moreno, A.},
        AUTHOR = {Lima, H.},
    AUTHOR = {Ma\~{n}as, M.},
    TITLE = {Bidiagonal matrix factorisations related to multiple orthogonal polynomials}
   pages={arXiv:2412.03694},
}

\bib{BDFM25}{article}{
    AUTHOR = {Branquinho, A.},
    AUTHOR = {D\'iaz, J.E.F.},
    AUTHOR = {Foulqui\'{e}-Moreno, A.},
    AUTHOR = {Ma\~{n}as, M.},
     TITLE = {Classical multiple orthogonal polynomials for arbitrary number of weights and their explicit representation},
   JOURNAL = {Stud. Appl. Math.},
    VOLUME = {154},
      YEAR = {2025},
    NUMBER = {3},
     PAGES = {Paper No. e70033, 21},
}

\bib{BFM22}{article}{
   author={Branquinho, A.},
   author={Foulqui\'{e}-Moreno, A.},
   author={Ma\~{n}as, M.},
   title={Multiple orthogonal polynomials: Pearson equations and Christoffel
   formulas},
   journal={Anal. Math. Phys.},
   volume={12},
   date={2022},
   number={6},
   pages={Paper No. 129, 59},
}

\bib{BFM23}{article}{
   author={Branquinho, A.},
   author={Foulqui\'{e}-Moreno, A.},
   author={Ma\~{n}as, M.},
   title={Oscillatory banded Hessenberg matrices, multiple orthogonal polynomials and Markov chains},
   journal={Physica Scripta},
   volume={98},
   date={2023},
   number={10},
   pages={105223},
}

\bib{BDT}{article}{
   author={Bueno, M. I.},
   author={Dea\~no, A.},
   author={Tavernetti, E.},
   title={A new algorithm for computing the Geronimus transformation with
   large shifts},
   journal={Numer. Algorithms},
   volume={54},
   date={2010},
   number={1},
   pages={101--139},
}

\bib{Bueno and Dopico}{article}{
   author={Bueno, M.I.},
   author={Dopico, F.M.},
   title={A more accurate algorithm for computing the Christoffel transformations},
   journal={Journal of Computational and Applied Mathematics},
   volume={205},
   year={2007},
   pages={567-582},
}

\bib{BueMar}{article}{
   author={Bueno, M. I.},
   author={Marcell\'an, F.},
   title={Darboux transformation and perturbation of linear functionals},
   journal={Linear Algebra Appl.},
   volume={384},
   date={2004},
   pages={215--242},
}

\bib{Chihara}{book}{
   author={Chihara, T.S.},
   title={An Introduction to Orthogonal Polynomials},
   isbn={9780486479293},
   series={Mathematics and Its Applications},
   volume={13},
   publisher={Gordon and Breach Science Publishers, Inc.},
   year={1978},
}

\bib{Chi85}{article}{
   author={Chihara, T. S.},
   title={Orthogonal polynomials and measures with end point masses},
   journal={Rocky Mountain J. Math.},
   volume={15},
   date={1985},
   number={3},
   pages={705--719},
}

\bib{Christoffel}{article}{
   title={{\"U}ber die Gau{\ss}ische Quadratur und eine Verallgemeinerung derselben.},
   author={Christoffel, E.B.},
   journal={Journal f{\"u}r die Reine und Angewandte Mathematik (Crelles Journal)},
   volume={55},
   year={1858},
   pages={61-82},
}


\bib{DerGarMar}{article}{
   author={Derevyagin, M.},
   author={Garc\'ia-Ardila, J. C.},
   author={Marcell\'an, F.},
   title={Multiple Geronimus transformations},
   journal={Linear Algebra Appl.},
   volume={454},
   date={2014},
   pages={158--183},
   issn={0024-3795},
   review={\MR{3208415}},
   doi={10.1016/j.laa.2014.04.024},
}

\bib{Gal92}{article}{
   author={Galant, D.},
   title={Algebraic methods for modified orthogonal polynomials},
   journal={Math. Comp.},
   volume={59},
   date={1992},
   number={200},
   pages={541--546},
}

\bib{Galant}{article}{
   author={Galant, D.},
   title={An Implementation of Christoffel's Theorem in the Theory of Orthogonal polynomials},
   journal={Mathematics of Computation},
   volume={25},
   year={1971},
   pages={111-113},
}

\bib{Gautschi}{article}{
   author={Gautschi, W.},
   title={An Algorithmic Implementation of the Generalized Christoffel Theorem},
   journal={Numerical Integration, International Series of Numerical Mathematics},
   volume={57},
   year={1982},
   pages={89-106},
}

\bib{Gautschi book}{book}{
   author={Gautschi, W.},
   title={Orthogonal Polynomials, Computation and Approximation},
   isbn={9780198506720},
   series={Numerical Mathematics and Scientific Computation},
   publisher={Oxford University Press},
   year={2004},
}

\bib{Geronimus}{article}{
    AUTHOR = {Geronimus, J.},
     TITLE = {On polynomials orthogonal with regard to a given sequence of
              numbers},
   JOURNAL = {Comm. Inst. Sci. Math. M\'ec. Univ. Kharkoff [Zapiski Inst.
              Mat. Mech.] (4)},
    VOLUME = {17},
      YEAR = {1940},
     PAGES = {3--18},
}

\bib{GesTes}{article}{
   author={Gesztesy, F.},
   author={Teschl, G.},
   title={Commutation methods for Jacobi operators},
   journal={J. Differential Equations},
   volume={128},
   date={1996},
   number={1},
   pages={252--299},
}

\bib{Golub and Kautsky}{article}{
  author={Golub, G.H.},
  author={Kautsky, J.},
  title={On the Calculation of Jacobi Matrices},
  journal={Linear Algebra and its Applications},
  volume={52-53}
  year={1983},
  pages={439-455}
}

\bib{GolKau83}{article}{
   author={Golub, G. H.},
   author={Kautsk\'y, J.},
   title={Calculation of Gauss quadratures with multiple free and fixed
   knots},
   journal={Numer. Math.},
   volume={41},
   date={1983},
   number={2},
   pages={147--163},
}

\bib{GruHei}{article}{
   author={Gr\"unbaum, F. A.},
   author={Haine, L.},
   title={Bispectral Darboux transformations: an extension of the Krall
   polynomials},
   journal={Internat. Math. Res. Notices},
   date={1997},
   number={8},
   pages={359--392},
}

\bib{Ismail}{book}{
   author={Ismail, M.E.H.},
   title={Classical and Quantum Orthogonal
Polynomials in One Variable},
   isbn={9780521782012},
   series={Encyclopedia of Mathematics and its Applications},
   Volume={98},
   publisher={Cambridge University Press},
   year={2005},
}

\bib{KauGol}{article}{
   author={Kautsk\'y, J.},
   author={Golub, G. H.},
   title={On the calculation of Jacobi matrices},
   journal={Linear Algebra Appl.},
   volume={52/53},
   date={1983},
   pages={439--455},
}

\bib{ctpaper}{article}{
   author={Kozhan, R.},
   author={Vaktn\"{a}s, M.},
   title={Christoffel transform and multiple orthogonal polynomials},
   journal={},
   volume={},
   date={},
   number={},
   pages={arXiv:2407.13946},
}

\bib{KVInterlacing}{article}{
   author={Kozhan, R.},
   author={Vaktn\"{a}s, M.},
   title={Zeros of multiple orthogonal polynomials: location and interlacing},
   journal={},
   volume={},
   date={},
   number={},
   pages={arXiv:2503.15122},
}

\bib{Mahler}{article}{
    AUTHOR = {Mahler, K.},
     TITLE = {Perfect systems},
   JOURNAL = {Compositio Math.},
    VOLUME = {19},
      YEAR = {1968},
     PAGES = {95--166},
}

\bib{ManasRojas}{article}{
  author={Ma\~{n}as, M.},
 author={Rojas, M.},
  title={General Christoffel Perturbations for Mixed Multiple Orthogonal Polynomials},
   pages={arXiv:2405.11630},
}

\bib{ManasRojas2}{article}{
  author={Ma\~{n}as, M.},
 author={Rojas, M.},
  title={General Geronimus Perturbations for Mixed Multiple Orthogonal Polynomials},
   journal={Analysis and Mathematical
Physics},
   volume={15},
   date={2025},
   number={50},
   pages={
},
}

\bib{MarMar}{article}{
   author={Marcell\'an, F.},
   author={Maroni, P.},
   title={Sur l'adjonction d'une masse de Dirac \`a{} une forme
   r\'eguli\`ere et semi-classique},
   language={French, with English and French summaries},
   journal={Ann. Mat. Pura Appl. (4)},
   volume={162},
   date={1992},
   pages={1--22},
}

\bib{Mar90}{article}{
   author={Maroni, P.},
   title={Sur la suite de polyn\^omes orthogonaux associ\'ee \`a{} la forme
   $u=\delta_c+\lambda(x-c)^{-1}L$},
   language={French, with English summary},
   journal={Period. Math. Hungar.},
   volume={21},
   date={1990},
   number={3},
   pages={223--248},
}

\bib{Mar91}{article}{
   author={Maroni, P.},
   title={Une th\'eorie alg\'ebrique des polyn\^omes orthogonaux.
   Application aux polyn\^omes orthogonaux semi-classiques},
   language={French},
   conference={
      title={Orthogonal polynomials and their applications},
      address={Erice},
      date={1990},
   },
   book={
      series={IMACS Ann. Comput. Appl. Math.},
      volume={9},
      publisher={Baltzer, Basel},
   },
   date={1991},
   pages={95--130},
}


\bib{Applications}{article}{
   author={Martínez-Finkelshtein, A.}
   author={Van Assche, W},
   title={WHAT IS...A Multiple Orthogonal Polynomial?},
   journal={Notices of the American Mathematical Society},
   volume={63},
   year={2016},
   pages={1029-1031},
}

\bib{Nikishin}{book}{
   author={Nikishin, E.M.},
   author={Sorokin, V.N.},
   title={Rational Approximations and Orthogonality},
   isbn={9780821845455},
   series={Translations of Mathematical Monographs},
   Volume={92},
   publisher={American Mathematical Society},
   year={1991},
}

\bib{Peh92}{article}{
   author={Peherstorfer, F.},
   title={Finite perturbations of orthogonal polynomials},
   journal={J. Comput. Appl. Math.},
   volume={44},
   date={1992},
   number={3},
   pages={275--302},
}

\bib{SpiZhe}{article}{
   author={Spiridonov, V.},
   author={Zhedanov, A.},
   title={Discrete Darboux transformations, the discrete-time Toda lattice,
   and the Askey-Wilson polynomials},
   journal={Methods Appl. Anal.},
   volume={2},
   date={1995},
   number={4},
   pages={369--398},
}

\bib{Uvarov}{article}{
   author={Uvarov, V.B.},
   title={The connection between systems of polynomials orthogonal with respect to different distribution functions},
   journal={USSR Computational Mathematics and Mathematical Physics},
   volume={9},
   year={1969},
   pages={25-36},
}

\bib{VAC01}{incollection}{
    AUTHOR = {Van Assche, W.},
    AUTHOR = {Coussement, E.},
     TITLE = {Some classical multiple orthogonal polynomials},
      NOTE = {Numerical analysis 2000, Vol. V, Quadrature and orthogonal
              polynomials},
   JOURNAL = {J. Comput. Appl. Math.},
    VOLUME = {127},
      YEAR = {2001},
    NUMBER = {1-2},
     PAGES = {317--347},
}

\bib{Geronimo}{incollection}{
    AUTHOR = {Van Assche, W.},
    AUTHOR = {Geronimo, J.S.},
    AUTHOR = {Kuijlaars, A.B.J.},
    TITLE = {Riemann-{H}ilbert problems for multiple orthogonal polynomials},
     BOOKTITLE = {Special functions 2000: Current Perspective and Future
              Directions ({T}empe, {AZ})},
    SERIES = {NATO Sci. Ser. II Math. Phys. Chem.},
 PUBLISHER = {Kluwer Acad. Publ., Dordrecht},
    VOLUME = {30},
    YEAR = {2001},
    PAGES = {23--59},
}


\bib{Zhe97}{article}{
   author={Zhedanov, A.},
   title={Rational spectral transformations and orthogonal polynomials},
   journal={J. Comput. Appl. Math.},
   volume={85},
   date={1997},
   number={1},
   pages={67--86},
}

\end{biblist}
\vspace{0.25cm}

\end{document}